\newtheorem{deff}{Definition}[section]
\newtheorem{prop}[deff]{Proposition}
\newtheorem{thm}[deff]{Theorem}
\newtheorem{lem}[deff]{Lemma}
\newtheorem{rmk}[deff]{\it Remark}
\newcounter{cst}
\newcommand*\diff{\mathop{}\!\mathrm{d}}
\def\R{\mathbb{R}}
\def\N{\mathbb{N}}
\def\E{\mathcal{E}}
\def\T{\mathcal{T}}
\def\K{\mathcal{K}}
\def\Ee{\mathfrak{E}}
\def\O{\Omega}
\def\p{\partial}
\def\grad{\nabla}
\def\d{{\rm d}}
\def\1{{\bf 1}}
\def\be{\begin{equation}}
	\def\ee{\end{equation}}
\def\Nn{\mathcal{N}}
\def\longrightharpoonup{{-\!\!\!\rightharpoonup}}
\newcommand{\raye}[1]{}
\begin{document}
\title[Large time behavior of a two phase PME]{Large time behavior of a two phase extension of the porous medium equation}
\thanks{This work was supported by the French National Research Agency through grant 
ANR-13-JS01-0007-01 (project GEOPOR)}

\author[ Ait Hammou Oulhaj]{Ahmed Ait Hammou Oulhaj}\address{Ahmed Ait Hammou Oulhaj:  Inria, Univ. Lille, CNRS, UMR 8524 - Laboratoire Paul Painlev\'e, F-59000 Lille 
(\href{mailto:ahmed.ait-hammou-oulhaj@math.univ-lille1.fr}{\tt ahmed.ait-hammou-oulhaj@math.univ-lille1.fr})}
\author[ Canc\`es]{Cl\'ement Canc\`es}\address{Cl\'ement Canc\`es:  Inria, Univ. Lille, CNRS, UMR 8524 - Laboratoire Paul Painlev\'e, F-59000 Lille (\href{mailto:clement.cances@inria.fr}{\tt clement.cances@inria.fr})} 
\author[Chainais-Hillairet]{Claire Chainais-Hillairet}\address{Claire Chainais-Hillairet: 
Univ. Lille,  CNRS, UMR 8524, Inria - Laboratoire Paul Painlev\'e, F-59000 Lille (\href{mailto:claire.chanais@math.univ-lille1.fr}{\tt claire.chanais@math.univ-lille1.fr})} 
\author[ Laurençot]{Philippe Laurençot}\address{Philippe Lauren\c{c}ot: Institut de Math\'ematiques de Toulouse, UMR 5219, 
Universit\'e de Toulouse, CNRS, F-31062 Toulouse Cedex 9, 
(\href{mailto:laurenco@math.univ-toulouse.fr}{\tt laurenco@math.univ-toulouse.fr})}

\begin{abstract}
We study the large time behavior of the solutions to a two phase extension of the porous medium equation, which models the so-called seawater intrusion problem. The goal is to identify the self-similar solutions that correspond to steady states of a rescaled version of the problem.  We fully characterize the unique steady states that are identified as minimizers of a convex energy and shown to be radially symmetric.
Moreover, we prove the convergence of the solution to the time-dependent model towards the unique stationary state as time goes to infinity. We finally provide numerical illustrations of the stationary states and we exhibit numerical convergence rates.
\end{abstract}

\maketitle

\noindent
{\small {\bf Keywords.}
	Two-phase porous media flows, Muskat problem, large time behavior, cross diffusion
	\vspace{5pt}
	
	\noindent
	{\bf AMS subjects classification. }
	35K65, 35K45, 76S05
}
\section{Introduction}
\subsection{Presentation of the continuous problem}

The purpose of this work is to investigate the large time behavior of a seawater intrusion model 
which is a two-phase generalization of the porous medium equation (PME). 
The model we are interested in is derived by Jazar and Monneau in \cite{Jazar}, 
where the authors consider the Dupuit approximation of an unsaturated immiscible two-phase 
(freshwater and saltwater) within an unconfined aquifer assuming that the interface 
between both fluids is sharp (the fluids occupy disjoint regions), see also \cite{Escher,WoMa00} for alternative derivations of the same model. 
This yields a 2D reduced model obtained from a full 3D model where the unknowns are 
the heights of the fluid layers. More precisely the interface between 
the saltwater and the bedrock is set at $\{z = 0\}$, whereas the height of the freshwater (resp. saltwater) 
layer is denoted by $\{z={f}(t,x)\}$ (resp. $\{z={g}(t,x)\}$), see Figure~\ref{aquifer}.
The model proposed in~\cite{Jazar} is 
\begin{equation}
\begin{cases}
\partial_t {f} -\nabla{\cdot}\big(\nu {f}\nabla({f}+{g})\big)=0 \quad &\text{in}\quad (0,\infty)\times\R^2, \\
\partial_t {g}-\nabla{\cdot}\big({g}\nabla(\rho {f}+{g})\big)=0 \quad&\text{in}\quad (0,\infty)\times\R^2,\\
{f}_{|t=0}={f}_0,\quad {g}_{|t=0}={g}_0 \quad  &\text{in} \quad \R^2,
\end{cases}
\label{system}
\end{equation}
where $\rho=\dfrac{\rho_{\text{fresh}}}{\rho_{\text{salt}}} \in (0,1)$ is the density ratio between the two fluids, 
and where the parameter $\nu= \dfrac{\nu_{\text{salt}}}{\nu_{\text{fresh}}} >0$ is the ratio of the kinematic viscosities.
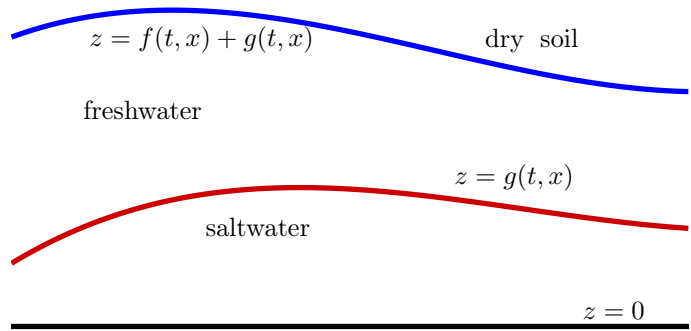
\begin{figure}[htb] 
	\begin{center} 
\definecolor{xdxdff}{rgb}{0.49,0.49,1.}
\definecolor{uuuuuu}{rgb}{0.27,0.27,0.27}

\definecolor{qqqqff}{rgb}{0.,0.,1.}
\definecolor{ccqqqq}{rgb}{0.8,0.,0.}
\begin{tikzpicture}[scale=1.5][line cap=round,line join=round,>=triangle 45,x=1.0cm,y=1.0cm]
\clip(3.,0.9) rectangle (9.,4.);
\draw[line width=2.pt,color=ccqqqq,smooth,samples=100,domain=3.0:9.0] plot(\x,{0.012086387182830342*(\x)^(3.0)-0.273455383599484*(\x)^(2.0)+1.9191785423052505*(\x)-2.062573339296535});
\draw[line width=2.pt,color=qqqqff,smooth,samples=100,domain=3.0:9.0] plot(\x,{0.01395571987851084*(\x)^(3.0)-0.28356314944941396*(\x)^(2.0)+1.688694212676448*(\x)+0.6759212725394361});
\draw [line width=2.pt] (0.,0.)-- (13.,0.);
\draw (7.115532536786527,3.724257902533276) node[anchor=north west] {{dry  \;soil}};
\draw (3.5552646598002586,3.0849552090992507) node[anchor=north west] {freshwater};
\draw (4.638235877134485,2.0425954124150585) node[anchor=north west] {{saltwater}};
\draw (6.844789732452971,2.5194524232489496) node[anchor=north west] {${z={g}(t,x)}$};
\draw (3.60941322066697,3.743035061233209) node[anchor=north west] {${z={f} (t,x)+{g}(t,x)}$};
\draw [line width=2.pt] (0.,1.)-- (9.132566429071524,1.0002356157308658);
\draw (7.981909510653908,1.3011098037479453) node[anchor=north west] {${z=0}$};
\begin{scriptsize}
\draw[color=ccqqqq] (-0.5464888258531233,-3.4196406650144437) node {$g$};
\draw[color=qqqqff] (-0.5464888258531233,-0.5497669390787451) node {$h$};
\draw [fill=uuuuuu] (0.,0.) circle (1.5pt);
\draw[color=uuuuuu] (0.08975676433073451,0.19477577283853525) node {$P$};
\draw [fill=xdxdff] (13.,0.) circle (2.5pt);
\draw[color=xdxdff] (13.098948512558128,0.24892433370524653) node {$Q$};
\draw [fill=xdxdff] (0.,1.) circle (2.5pt);
\draw[color=xdxdff] (0.08975676433073451,1.2506727097394055) node {$R$};
\draw [fill=qqqqff] (9.132566429071524,1.0002356157308658) circle (2.5pt);
\end{scriptsize}
\end{tikzpicture}
		\caption{The physical setting}
		\label{aquifer}
	\end{center}
\end{figure}
The authors in \cite{Escher,Escher2} studied the classical solutions of system (\ref{system}). 
Moreover, the existence of weak solutions is established under different assumptions in \cite{Escher3,laurencot,laurencot2,Li}.

The characteristic time corresponding to the aquifer dynamics is large. Therefore, understanding the large-time behavior of system~(\ref{system}) is of great interest. The so-called entropy method \cite{AMTU01,Jungel_entropy2016} provides a powerful approach to study the long time behavior of  different systems of PDEs. It has been developed first for kinetic equations (Boltzmann and Landau \cite{Toscani_Villani2000}). Then it was extended to other problems, as the linear Fokker-Planck equation \cite{Carillo_Toscani1998}, the porous medium equation (PME) \cite{Carillo_Toscani2000, Vazquez2007}, reaction-diffusion systems \cite{Desvillettes2006, Desvillettes2015, Glitzky}, drift-diffusion systems for semiconductor devices \cite{Gajewski1996, Gajewski1986, Gajewski1989}, and thin film models \cite{Carlen2005}.
For more details about this method and its application domains, one can refer to \cite{AMTU01, Jungel_entropy2016} and the references therein.
Similar results were obtained in \cite{Bolley2012, Bolley2013,
 Otto2001, ZM15}  based on the interpretation of the PDE models as the gradient flow of a certain energy functional with respect to the Wasserstein metric. We refer for instance to the monographs \cite{Ambrosio2008,Santambrogio_OTAM} for an extensive discussion on this topic.

In \cite{Laurencot_CompoLong}, Laurençot and Matioc studied the large-time behavior of the system (\ref{system}) in the one-dimensional case.  In their paper a classification of self-similar solutions is first provided : there is always a unique even self-similar solution (also found in \cite{WoMa00}) while a continuum of non-symmetric self similar solutions exists for certain fluid configurations. The authors proved the convergence of all nonnegative weak solutions towards a self-similar solution. Nevertheless nothing is known about the rate of convergence. Surprisingly, the situation is simpler in the 2D case, as we shall see below.

As already mentioned, the system (\ref{system})  can be interpreted as a two-phase generalization of the PME. 
This can be easily seen by choosing $f\equiv0$ or $g\equiv 0$ in the system~\eqref{system}. 
In order to explain the principles of the entropy method, let us consider the following PME 
\begin{equation}
\begin{cases}
\partial_t v=\Delta v^2\quad &\text{in}\;(0,\infty)\times \R^2,\\
v(0,x)=v_0(x)\geq 0\quad &\text{in}\; \R^2.
\end{cases}
\label{eq_mil_por_LargeTime}
\end{equation}
A further transformation of (\ref{eq_mil_por_LargeTime}) involves the so-called self-similar variables (see \cite{Carillo_Toscani2000, Vazquez2007})  and reads
\be u(t,x)=e^{2t}v\Big(\dfrac{1}{4}(e^{4t}-1),xe^t\Big),\qquad (t,x)\in [0,\infty)\times\R^2, \label{auto_sim_PME} \ee
which transforms (\ref{eq_mil_por_LargeTime}) into the nonlinear Fokker-Planck equation
\begin{equation}
\begin{cases}
\partial_t u=\text{div} (xu+\grad u^2)\quad &\text{in}\;(0,\infty)\times \R^2,\\
u(0,x)=v_0(x)\geq 0\quad &\text{in}\; \R^2.
\end{cases}
\label{Fok_Plank_PME}
\end{equation}
In \cite{Carillo_Toscani2000} the authors study the large time behavior of the PME (\ref{eq_mil_por_LargeTime}) using the rescaled equation (\ref{Fok_Plank_PME}). The energy corresponding to (\ref{Fok_Plank_PME}) is 
\be\label{eq:NRJ-PME}
H(u)=\int_{\R^2} \Big(|x|^2u+2u^2\Big)\diff x.
\ee
Given a nonnegative initial condition $v_0\in L^1(\R^2)$ with $M:=\|v_0\|_{L^1(\R^2)}>0$, they prove that the unique stationary solution of (\ref{Fok_Plank_PME}), which is given by
the Barenblatt-Pattle type formula
\be\label{eq:Barenblatt}
u_\infty(x)= \max\Bigg\{ \beta-\dfrac{1}{4}|x|^2 , 0 \Bigg\}, \qquad x\in\R^2,
\ee
where $\beta$ is determined by $\| u_\infty\|_{L^1(\R^2)} = M$ attracts the corresponding solution $u$ to \eqref{Fok_Plank_PME} at an exponential rate.
Moreover $u_\infty$  is the unique minimizer of $H$ in 
$$ \Big\{u\in L^2(\R^2)\cap L^1\big(\R^2,(1+|x|^2)\diff x\big)\;\; : \;\; \|u\|_{L^1(\R^2)}=M \Big\}.
$$
More precisely, the relative entropy of $u$ with respect to $u_\infty$ is then defined by
$$H(u|u_\infty):=H(u)-H(u_\infty)\geq 0,$$
whereas the entropy production for $H(u|u_\infty)$  is given by
$$I(u)=\int_{\R^2} u\big|x+2\grad u\big|^2 \diff x.$$
Let the initial data $v_0$ satisfy $v_0\in L^1(\R^2)\cap L^\infty (\R^2)$ with $|x|^{2+\delta} v_0\in L^1(\R^2)$ for some $\delta >0$. Then the corresponding solution $u$ to \eqref{Fok_Plank_PME} satisfies
$$
\lim_{t\to\infty} H (u(t)|u_\infty)=0 \quad  \text{and}\quad \lim_{t\to\infty} I(u(t)) = 0.
$$
Moreover, $H (u(t)|u_\infty)$ and $I(u(t))$ are linked by the relation
\be \dfrac{\diff}{\diff t}H (u(t)|u_\infty)=-2 I(u(t)),\label{entrR_dissipPME}\ee
and
\be \dfrac{\diff}{\diff t}I(u(t))=-2I(u(t))-R(t),\label{dissip_eqPME}\ee
where $R(t)\geq 0$. Combining (\ref{entrR_dissipPME}) and (\ref{dissip_eqPME}) one obtains
\be 
\dfrac{\diff}{\diff t}H (u(t)|u_\infty)=\dfrac{\diff}{\diff t}I(u(t))+R(t) \ge \dfrac{\diff}{\diff t}I(u(t)).\label{rel_dtH_dtI}
\ee
Integrating (\ref{rel_dtH_dtI}) with respect to $t$ over $(0,\infty)$ gives
\be 0\leq H(u(t)|u_\infty) \leq I(u(t)),\quad t>0.\label{ineg_entrR_dissipPME}\ee
Substituting (\ref{ineg_entrR_dissipPME}) into (\ref{entrR_dissipPME}), one concludes with the exponential decay of the relative entropy to zero at a rate 2.

The goal of this paper is to apply a similar strategy to describe the long time behavior of the system~(\ref{system}). Our approach relies also in the use of self-similar variables~(\ref{self_similar_var}), leading to the introduction of quadratic confining  potentials. The advantage of this alternative formulation is that the profiles of nonnegative self-similar solutions to (\ref{system}) are
nonnegative stationary solutions to (\ref{system2}). We will give an explicit characterization of the self-similar profiles in Section~\ref{self_Profiles} and, relying on compactness arguments, we prove the convergence towards a stationary solution. Unfortunately, due to the extended complexity of the problem~(\ref{system}) with respect to (\ref{eq_mil_por_LargeTime}) we are not able to establish the exponential convergence towards a steady state. This motivates the numerical investigation carried out in Section~\ref{numer_invest}, using a Finite Volume scheme~\cite{intrusion, moifvca2} which preserves at the discrete level the main features of the continuous problem (in particular the nonnegativity of the solutions, conservation of mass and decay of energy).

The outline of the paper is as follows. In the next section we
state the main results of our paper. 
As a preliminary step, we introduce a rescaled
version (\ref{system2}) of the  system (\ref{system}) which relies in particular on the introduction of self-similar variables. In Theorem~\ref{thm:exisUniq_profil} we state the existence and uniqueness of nonnegative stationary solutions to (\ref{system2}), which are moreover radially symmetric, compactly supported and Lipschitz continuous. The convergence of any nonnegative weak solution to (\ref{system2}) towards these stationary solutions is stated in Theorem~\ref{thm:conv}.
In Section~\ref{ppte_pbCont}  we prove Theorem~\ref{thm:exisUniq_profil} and Theorem~\ref{thm:conv}. In Section~\ref{self_Profiles} we give a classification of the self-similar profiles and we exhibit critical values of the parameter $\nu$ for which the shape of the stationary profile changes. We finally present in Section~\ref{numer_invest} numerical simulations for different values of $\nu$ in order to observe the stationary solutions  and the decay of the  relative energy.

\section{Main results}
In what follows, given $M>0$, we use the following closed convex set
\begin{equation*} 
\K_{M}:=\Big\{h\in L^2(\R^2)\cap L^1\big(\R^2,(1+|x|^2)\diff x\big):
\ h\geq 0\;\; \text{a.e. and}\;\; 
\| h\|_{L^1(\R^2)} =M \Big\}.
\end{equation*}

\subsection{Self-similar solutions}
The main contribution of this paper is the classification of nonnegative self-similar solutions to (\ref{system}), that is, solutions of the form 
\be 
({f},{g})(t,x)=(1+t)^{-1/2}(F,G)(x (1+t)^{-1/4}),\quad (t,x)\in (0,\infty)\times \R^2.
\label{changt_autoSimil}
\ee
Deeper insight on this issue is provided by transforming \eqref{system} using the so-called self-similar variables \cite{Carillo_Toscani2000, Vazquez2007}, i.e.,
\be
 ({f},{g})(t,x)=\dfrac{1}{(1+t)^{1/2}}(\tilde f,\tilde g)\Big(\log(1+t),\dfrac{x}{(1+t)^{1/4}}\Big).\label{self_similar_var}
 \ee
We end up with the following rescaled system
\begin{equation}
\begin{cases}
\partial_t \tilde f -\nabla{\cdot}\Big(\nu \tilde f\nabla(\tilde f+\tilde g+b/\nu)\Big)=0 \quad &\text{in}\quad(0,\infty)\times\R^2, \\
\partial_t \tilde g-\nabla{\cdot}\Big(\tilde g\nabla(\rho \tilde f+\tilde g+b)\Big)=0 \quad &\text{in}\quad (0,\infty)\times\R^2,\\
\tilde f_{|t=0}=f_0,\quad \tilde g_{|t=0}=g_0 \quad  &\text{in} \quad \R^2,
\end{cases}
\label{system2}
\end{equation}
where $b(x)=|x|^2/8$. Thus this change of variable preserves the nature of the equations but adds a 
confining drift term, the confining potentials $b/\nu$ and $b$ being different as soon as the two phases have different kinematic viscosities, i.e., $\nu \neq 1$.
The resulting system~\eqref{system2} still has a gradient flow structure, but for a modified energy in comparison to~\eqref{system}. 
Indeed the system (\ref{system2}) can be interpreted as the gradient flow with respect to the $2-$Wasserstein 
metric~\cite{laurencot} of the following energy
\be \Ee (\tilde f, \tilde g)=\int_{\R^2} E(\tilde f,\tilde g)\diff x, \ee
where 
\be 
E(\tilde f,\tilde g)= \dfrac{\rho}{2}(\tilde f+\tilde g)^2+\dfrac{1-\rho}{2}\tilde g^2+b\Big(\dfrac{\rho}{\nu}\tilde f+\tilde g\Big).
\label{E_NRJ}\ee

A corner stone of our study is that, if $(f,g)$ is a self-similar solution \eqref{changt_autoSimil} to \eqref{system}, then the corresponding self-similar profile $(F,G)$ is a stationary solution to (\ref{system2}). We will see in what follows that, given $M_f>0$ and $M_g>0$, there is a unique nonnegative stationary solution $(F,G)$ to \eqref{system2} satisfying $\|F\|_{L^1(\R^2)}=M_f$ and $\|G\|_{L^1(\R^2)}=M_g$ and it is the unique minimizer of $\Ee$ in $\K_{M_f}\times \K_{M_g}$. Furthermore, it satisfies the system
\begin{equation*}
\begin{cases}
F\nabla\phi_F&=0 \quad\text{in}\quad\R^2, \\
G\nabla\phi_G&=0 \quad\text{in}\quad \R^2,
\end{cases}
\end{equation*}
where we introduce the potentials
\begin{align*}
\phi_F := F + G + \frac{b}{\nu},
\qquad 
\phi_G := \rho F + G + b.
\end{align*}
In other words, the fluxes expressed in the self-similar variables are identically equal to zero. 

In what follows, we mainly work on the system~\eqref{system2} expressed in self-similar variables. 
In order to lighten the notations, we remove the tildes on $\tilde f$ and $\tilde g$ and denote solutions to~\eqref{system2} by $(f,g)$ while the steady states to~\eqref{system2} are denoted by $(F,G)$. 

Given two positive real numbers $M_f>0$ and $M_g>0$ and a stationary solution $(F,G)\in \K_{M_f}\times \K_{M_g}$ to (\ref{system2}), we define the positivity sets $E_F$ and $E_G$ of $F$ and $G$ by
$$ E_F=\{x\in \R^2 \; :\; F(x)>0\},\quad E_G=\{x\in \R^2\; :\; G(x)>0\},$$
and notice that $E_F$ and $E_G$ are both nonempty as
\be \|F\|_{L^1(\R^2)} = \int_{\R^2} F\diff x=M_f>0 \quad  \text{and}\quad \|G\|_{L^1(\R^2)} = \int_{\R^2}G\diff x=M_g>0.\label{M_f_M_gposi}\ee

\subsection{Main results}

Let $(F,G)\in \K_{M_f}\times \K_{M_g}$ be a stationary solution of (\ref{system2}). Let $M_f>0$ and $M_g>0$ be two positive real numbers.

\begin{thm}[Self-similar profiles]
There exists a unique stationary solution $(F,G)\in \K_{M_f}\times \K_{M_g}$ of (\ref{system2}) such that $F$ and $G$ belong to $H^1(\R^2)$ and $\sqrt{F}\phi_F$ and $\sqrt{G}\phi_G$ belong to $L^2(\R^2)$. It is radially symmetric, compactly supported, and Lipschitz continuous, and satisfies
\begin{equation*}
\begin{cases}
F\nabla(F+G+b/\nu)&=0 \quad\text{a.e. in}\quad\R^2, \\
G\nabla(\rho F+G+b)&=0 \quad\text{a.e. in}\quad \R^2.
\end{cases}
\end{equation*}
Moreover, $E_F$ and $E_G$ are bounded connected sets and  $$ (F,G)\in \underset{(f,g)\in \K_{M_f} \times \K_{M_g}}{\operatorname{arg\,min}} \Ee(f,g).$$
\label{thm:exisUniq_profil}
\end{thm}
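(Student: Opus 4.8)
The plan is to characterize the desired steady state variationally and to read every assertion of the theorem off that characterization. Concretely, I would first show that $\Ee$ has a unique minimizer on $\K_{M_f}\times\K_{M_g}$, then extract from its Euler--Lagrange conditions the explicit shape, regularity and geometry claimed, and finally prove that conversely any stationary solution lying in the stated regularity class must coincide with this minimizer. For the minimization itself: the quadratic part of $E$ in~\eqref{E_NRJ} is $\tfrac{\rho}{2}f^2+\rho fg+\tfrac12 g^2$, whose matrix $\left(\begin{smallmatrix}\rho&\rho\\\rho&1\end{smallmatrix}\right)$ is positive definite since $\rho\in(0,1)$; together with the nonnegative confining term $b(\rho f/\nu+g)$ this makes $\Ee$ strictly convex and coercive on $\K_{M_f}\times\K_{M_g}$, with $\Ee(f,g)\geq c\big(\|f\|_{L^2}^2+\|g\|_{L^2}^2+\int_{\R^2}|x|^2(f+g)\diff x\big)$ for some $c>0$. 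A minimizing sequence is then bounded in $L^2(\R^2)\cap L^1(\R^2,(1+|x|^2)\diff x)$; the uniform second-moment bound gives tightness, so no mass escapes and the $L^1$-constraints are inherited by a weak-$L^2$ limit, while nonnegativity and the weak lower semicontinuity of $\Ee$ are preserved. Strict convexity yields a unique minimizer, which I call $(F,G)$.

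Next, comparing $(F,G)$ with $((1-t)F+tf,(1-t)G+tg)$ for $(f,g)\in\K_{M_f}\times\K_{M_g}$ and letting $t\to0^+$ gives the variational inequality $\int_{\R^2}\big(\rho\phi_F(f-F)+\phi_G(g-G)\big)\diff x\geq0$, using $\partial_f E=\rho\phi_F$ and $\partial_g E=\phi_G$. Since the two constraints decouple, this splits into two obstacle-type inequalities, each producing a Lagrange multiplier attached to the mass constraint: there are $\lambda_F,\lambda_G\in\R$ with $\phi_F\geq\lambda_F$, $\phi_G\geq\lambda_G$ a.e.\ and $F(\phi_F-\lambda_F)=G(\phi_G-\lambda_G)=0$ a.e. Solving these pointwise complementarity relations (for fixed $x$, $F\geq0$, $F+c\geq0$, $F(F+c)=0$ forces $F=(-c)_+$) yields
\be
F=\Big(\lambda_F-G-\tfrac{b}{\nu}\Big)_+,\qquad
G=\big(\lambda_G-\rho F-b\big)_+\qquad\text{a.e. in }\R^2 ,
\ee
and, $E_F$ and $E_G$ being nonempty, $\lambda_F>0$ and $\lambda_G>0$.

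From here all the remaining properties are consequences. On $E_F$ one has $F+G>0$, so the first identity forces $b/\nu<\lambda_F$, i.e.\ $E_F\subset B(0,\sqrt{8\nu\lambda_F})$, and likewise $E_G\subset B(0,\sqrt{8\lambda_G})$; both supports are compact. Substituting the first relation into the second shows that for a.e.\ $x$, $G(x)$ is a fixed point of $s\mapsto\big(\lambda_G-\rho(\lambda_F-s-b(x)/\nu)_+-b(x)\big)_+$, a contraction of ratio $\rho<1$ depending locally Lipschitzly on $x$ through $b$; uniqueness of the fixed point identifies $G$ a.e.\ with a locally Lipschitz function, hence so is $F=(\lambda_F-G-b/\nu)_+$. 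Being Lipschitz with compact support, $F,G\in W^{1,\infty}\cap H^1(\R^2)$, and $\sqrt F\,\phi_F=\lambda_F\sqrt F$, $\sqrt G\,\phi_G=\lambda_G\sqrt G$ belong to $L^2$. Since $\phi_F\equiv\lambda_F$ on $E_F$, $F\equiv0$ off $E_F$, and $F,G\in H^1$, we get $F\nabla\phi_F=0$ a.e., and likewise $G\nabla\phi_G=0$. Radial symmetry follows from uniqueness of the minimizer and rotational invariance of $\Ee$ and $\K_{M_f}\times\K_{M_g}$ ($b$ being radial), since $(F\circ R,G\circ R)$ is again the minimizer for every rotation $R$. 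Finally, writing $F,G$ as functions of $r=|x|$ and using that $r\mapsto b(r)$ is strictly increasing, the inequalities $\phi_F\geq\lambda_F$, $\phi_G\geq\lambda_G$ together with the complementarity relations leave only finitely many admissible orderings of the regions $\{F>0,G>0\}$, $\{F>0,G=0\}$, $\{F=0,G>0\}$, $\{F=G=0\}$, and a short case analysis rules out disconnected positivity sets; so $E_F$, $E_G$ are connected. Moreover the variational inequality is exactly the first-order condition for a minimizer of the convex $\Ee$ over $\K_{M_f}\times\K_{M_g}$, so $(F,G)\in\operatorname{arg\,min}\Ee$.

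It remains to upgrade this to uniqueness among \emph{all} stationary solutions in the stated class. By the above, $(F,G)$ is such a solution; conversely, let $(\tilde F,\tilde G)$ be any stationary solution with $\tilde F,\tilde G\in H^1$ and $\sqrt{\tilde F}\,\phi_{\tilde F},\sqrt{\tilde G}\,\phi_{\tilde G}\in L^2$, where $\phi_{\tilde F}=\tilde F+\tilde G+b/\nu$, $\phi_{\tilde G}=\rho\tilde F+\tilde G+b$. Using these integrability properties to justify $\phi_{\tilde F},\phi_{\tilde G}$ — or suitable truncations — as test functions in the weak formulations $-\nabla\cdot(\nu\tilde F\nabla\phi_{\tilde F})=0$, $-\nabla\cdot(\tilde G\nabla\phi_{\tilde G})=0$ gives $\int\nu\tilde F|\nabla\phi_{\tilde F}|^2=\int\tilde G|\nabla\phi_{\tilde G}|^2=0$, hence $\tilde F\nabla\phi_{\tilde F}=\tilde G\nabla\phi_{\tilde G}=0$ a.e.; this forces $\phi_{\tilde F}$ to equal one constant on $E_{\tilde F}$ and be $\geq$ that constant elsewhere (and similarly for $\tilde G$), which is precisely the variational inequality above, so by convexity $(\tilde F,\tilde G)$ is a minimizer and therefore equals $(F,G)$. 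I expect the main obstacles to be the tightness and lower semicontinuity in the weighted direct method and, more substantially, this last closing argument: rigorously justifying the degenerate test functions (exactly where $F,G\in H^1$ and $\sqrt F\phi_F,\sqrt G\phi_G\in L^2$ are used) and upgrading $F\nabla\phi_F=0$ to the global complementarity with a single multiplier, which is entangled with the connectedness of the positivity sets.
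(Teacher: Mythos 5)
Your treatment of the minimizer itself is essentially sound, and it follows a genuinely different route from the paper: you extract the obstacle-type Euler--Lagrange conditions with Lagrange multipliers, write $F=(\lambda_F-G-b/\nu)_+$, $G=(\lambda_G-\rho F-b)_+$, obtain Lipschitz continuity via the contraction fixed-point in $s\mapsto\big(\lambda_G-\rho(\lambda_F-s-b/\nu)_+-b\big)_+$, and get radial symmetry from rotational invariance plus uniqueness of the minimizer. The paper instead obtains \eqref{system_stat} and the regularity \eqref{regstatsol} for the minimizer by running the evolution from $(F,G)$ and invoking the energy inequality of Theorem~\ref{thm:exists} (Proposition~\ref{prop:minimizer}), and then derives local Lipschitz continuity and radial symmetry for \emph{any} stationary solution from Stampacchia's theorem and the explicit gradient identities \eqref{F_G_sur_compl}--\eqref{F_G_sur_intersec} (Lemma~\ref{lem:radial}). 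That difference is not cosmetic: the paper's arguments do not use minimality, and this is exactly what your proof is missing at the end.

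The genuine gap is in your closing uniqueness step, and you have correctly identified but not resolved it. The theorem asserts uniqueness among \emph{all} stationary solutions $(\tilde F,\tilde G)\in\K_{M_f}\times\K_{M_g}$ with the stated regularity, not only among minimizers. From $\tilde F\nabla\phi_{\tilde F}=\tilde G\nabla\phi_{\tilde G}=0$ a.e.\ you claim that $\phi_{\tilde F}$ equals a single constant on $E_{\tilde F}$ and is $\geq$ that constant elsewhere; none of this follows. First, $\tilde F$ is a priori only in $H^1(\R^2)$, so $E_{\tilde F}$ is merely measurable and ``constant on connected components'' is not even well defined at this stage. Second, even granting continuity, the vanishing of the flux only makes $\phi_{\tilde F}$ locally constant on $E_{\tilde F}$, with possibly different constants on different components. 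Third, and most importantly, the inequality $\phi_{\tilde F}\geq\lambda_F$ off the positivity set is precisely the first-order minimality condition; assuming it for a general zero-flux stationary state is circular, since it is what separates the minimizer from other critical points. The paper closes this loop by a completely different mechanism: Lemma~\ref{lem:radial} shows every solution of \eqref{system_stat} satisfying \eqref{regstatsol} is radially symmetric and locally Lipschitz (via $\nabla\tilde F=0$ a.e.\ on $\{\tilde F=0\}$ and the explicit formulas \eqref{OnE_FandE_G}--\eqref{OnE_G}), Lemma~\ref{connected_sets} shows $E_F$ and $E_G$ are bounded and connected for any such solution, and Section~\ref{self_Profiles} explicitly classifies all radial solutions, finding exactly one for the given $(M_f,M_g,\rho,\nu)$; uniqueness, and hence the identification with the minimizer (Remark~\ref{rmk:uniqueness}), comes from this case analysis, not from a variational inequality. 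To repair your argument you would need either such a classification of all zero-flux states or an independent proof that every stationary solution in the class satisfies the single-multiplier complementarity conditions; note also that your connectedness of $E_F,E_G$ is only sketched and only for the minimizer, whereas the theorem (and the paper's classification) needs it for arbitrary stationary solutions.
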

An important result of this paper is the complete classification of the self-similar profiles $(F,G)$. In fact, in contrast with the 1D case~\cite{Laurencot_CompoLong}, we prove that all self-similar profiles $(F,G)$ are radially symmetric and thus can be computed explicitly, see Section~\ref{self_Profiles}. Besides being of interest to compare the outcome of numerical simulations with the theoretical predictions, this feature is at the heart of the uniqueness proof. Still, as it will be explained in Section~\ref{self_Profiles}, the shape of $F$ and $G$ strongly depends on $\nu$, $\rho$, and the mass ratio 
${M_f}/{M_g}$. The topology of $E_F$ and $E_G$ changes according to the values of these parameters.

The second result concerns the convergence of weak solutions to \eqref{system2} towards the stationary state.
\begin{thm}[Convergence towards the stationary state]
Let $(f_0,g_0)\in \K_{M_f}\times \K_{M_g}$ and consider the weak solution $(f,g)$ of (\ref{system2}) given by Theorem~\ref{thm:exists}. Then
$$\big(f(t), g(t)\big) \longrightarrow (F, G) \;\text{ in }\; L^2(\R^2;\R^2) \;\text{ as }\; t \rightarrow \infty,$$
where $(F,G)$ is the unique stationary solution of \eqref{system2} in $\K_{M_f}\times \K_{M_g}$ given by Theorem~\ref{thm:exisUniq_profil}.
\label{thm:conv}
\end{thm}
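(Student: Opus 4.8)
The plan is to use the energy $\Ee$ as a Lyapunov functional, in the spirit of the entropy method recalled in the introduction, but — the system being degenerate parabolic — to rely on compactness rather than on a quantitative entropy/entropy‑production inequality (which, as already announced, seems out of reach). Setting $\phi_f=f+g+b/\nu$ and $\phi_g=\rho f+g+b$ as for the steady states, one checks from \eqref{E_NRJ} that $\partial_f E=\rho\,\phi_f$ and $\partial_g E=\phi_g$. Multiplying the two equations of \eqref{system2} by $\rho\,\phi_f$ and $\phi_g$ respectively, summing and integrating by parts — a computation to be justified through the weak formulation furnished by Theorem~\ref{thm:exists} and a suitable regularisation — I would obtain the dissipation identity
\be
\frac{\diff}{\diff t}\,\Ee\big(f(t),g(t)\big)=-\,\mathcal{D}\big(f(t),g(t)\big),\qquad \mathcal{D}(f,g):=\rho\nu\!\int_{\R^2}\! f\,|\grad\phi_f|^2\,\diff x+\!\int_{\R^2}\! g\,|\grad\phi_g|^2\,\diff x\;\ge\;0.
\ee
Since $\Ee\ge0$ on $\K_{M_f}\times\K_{M_g}$ and $\min_{\K_{M_f}\times\K_{M_g}}\Ee=\Ee(F,G)$ by Theorem~\ref{thm:exisUniq_profil}, the nonincreasing map $t\mapsto\Ee(f(t),g(t))$ converges to some $\Ee_\infty\ge\Ee(F,G)$ and $\int_0^\infty\mathcal{D}(f(t),g(t))\,\diff t\le\Ee(f_0,g_0)-\Ee_\infty<\infty$; in particular there is $t_n\to\infty$ with $\mathcal{D}(f(t_n),g(t_n))\to0$, i.e. $\sqrt{f(t_n)}\,\grad\phi_f\to0$ and $\sqrt{g(t_n)}\,\grad\phi_g\to0$ in $L^2(\R^2)$ (potentials evaluated along $(f(t_n),g(t_n))$).

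Next I would pass to the limit along $t_n$. The bound $\Ee(f(t),g(t))\le\Ee(f_0,g_0)$ keeps $\{(f(t),g(t))\}_{t\ge1}$ bounded in $L^2(\R^2)\cap L^1(\R^2,(1+|x|^2)\diff x)$ componentwise and, via the uniform second‑moment bound, tight. Up to a subsequence $f(t_n)\rightharpoonup f_\star$, $g(t_n)\rightharpoonup g_\star$ weakly in $L^2(\R^2)$, and tightness together with conservation of mass for \eqref{system2} yields $(f_\star,g_\star)\in\K_{M_f}\times\K_{M_g}$. The crucial technical point is to upgrade this to \emph{strong} $L^2(\R^2)$-convergence and to pass to the limit in the relations above: using the $L^\infty_{\mathrm{loc}}$-smoothing of the porous‑medium‑type system for positive times, the confinement (which should give a uniform‑in‑time control of a higher moment of $f(t),g(t)$, or even of their supports), and a time‑translation/Aubin–Lions argument in the spirit of \cite{laurencot}, one would get $f(t_n)\to f_\star$, $g(t_n)\to g_\star$ in $L^2(\R^2)$ and that $(f_\star,g_\star)$ is a nonnegative stationary solution of \eqref{system2} with $f_\star,g_\star\in H^1(\R^2)$ and $\sqrt{f_\star}\phi_{f_\star},\sqrt{g_\star}\phi_{g_\star}\in L^2(\R^2)$. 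By the uniqueness part of Theorem~\ref{thm:exisUniq_profil}, $(f_\star,g_\star)=(F,G)$.

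From there the conclusion is soft. Strong $L^2$-convergence makes the quadratic part of $\Ee$ converge, while the uniform support/moment bound handles the confining term, so $\Ee(f(t_n),g(t_n))\to\Ee(F,G)$ and hence $\Ee_\infty=\Ee(F,G)=\min_{\K_{M_f}\times\K_{M_g}}\Ee$. Now the integrand $E$ in \eqref{E_NRJ} is a quadratic polynomial in $(f,g)$ with constant Hessian $A=\begin{pmatrix}\rho&\rho\\\rho&1\end{pmatrix}$, which is positive definite since $\rho\in(0,1)$ (so $\det A=\rho(1-\rho)>0$, $\mathrm{tr}\,A=1+\rho>0$); let $\lambda_\star>0$ be its smallest eigenvalue. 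Expanding $\Ee$ exactly around $(F,G)$ gives, for every $(f,g)\in\K_{M_f}\times\K_{M_g}$,
\be
\Ee(f,g)-\Ee(F,G)=\big\langle D\Ee(F,G),\,(f-F,g-G)\big\rangle+\tfrac12\!\int_{\R^2}\!(f-F,\,g-G)\,A\,(f-F,\,g-G)^{\!\top}\diff x,
\ee
where the first term is $\ge0$ because $(F,G)$ minimises the convex functional $\Ee$ over the convex set $\K_{M_f}\times\K_{M_g}$, and the second is $\ge\frac{\lambda_\star}{2}\big(\|f-F\|_{L^2}^2+\|g-G\|_{L^2}^2\big)$. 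Applying this at $(f(t),g(t))$,
\be
\|f(t)-F\|_{L^2(\R^2)}^2+\|g(t)-G\|_{L^2(\R^2)}^2\le\frac{2}{\lambda_\star}\big(\Ee(f(t),g(t))-\Ee(F,G)\big)\xrightarrow[t\to\infty]{}0,
\ee
which is the asserted convergence in $L^2(\R^2;\R^2)$.

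I expect the main obstacle to be the compactness/identification step (the second paragraph above): because of the degeneracy, neither the energy nor the dissipation provides a direct $H^1$ bound on $f,g$, so extracting strong $L^2$-compactness of $\{(f(t_n),g(t_n))\}$, controlling the mass and the energy at spatial infinity, and passing to the limit in the nonlinear fluxes $f\grad\phi_f$ and $g\grad\phi_g$ so as to recognise the limit as the unique stationary profile, is where the real work lies. The convexity estimate of the last paragraph is only a one‑sided entropy inequality, which is consistent with the fact that it upgrades $\Ee(f(t),g(t))\to\Ee(F,G)$ to convergence of $(f(t),g(t))$ but yields no decay rate.
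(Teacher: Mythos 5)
Your overall strategy (energy as Lyapunov functional, integrability of the dissipation, compactness, identification of the limit with the unique stationary profile) parallels the paper's, and your closing quadratic expansion of $\Ee$ around the minimizer is a correct and even elegant way to upgrade energy convergence to $L^2$ convergence. However, the central compactness/identification step — which you yourself flag as "where the real work lies" — contains a genuine gap as proposed. You claim that "neither the energy nor the dissipation provides a direct $H^1$ bound on $f,g$" and propose to substitute an $L^\infty_{\rm loc}$ smoothing property of the system for positive times; no such smoothing result is proved in the paper (nor is one available off the shelf for this degenerate cross-diffusion system), so this step fails as written. The mechanism that actually works is already at your disposal in Theorem~\ref{thm:exists}: besides the energy inequality, the weak solution satisfies the entropy inequality \eqref{eq:entropy} whose dissipation is $\mathcal{D}(f,g)=\nu\rho\|\nabla(f+g)\|_{L^2(\R^2)}^2+\nu(1-\rho)\|\nabla g\|_{L^2(\R^2)}^2$; combined with the bounds \eqref{eq:entropy_up}--\eqref{eq:entropy_low} on $\mathcal{H}$, this gives a bound on the time translates $f_n(t,\cdot)=f(t+n,\cdot)$, $g_n(t,\cdot)=g(t+n,\cdot)$ in $L^2((0,1);H^1(\R^2))$ uniformly in $n$. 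Together with the vanishing of the dissipation $\Psi_n$ in $L^1((0,1)\times\R^2)$ (which yields $\partial_t f_n,\partial_t g_n\to 0$ in $L^2((0,1);(W^{1,4}(\R^2))')$) and the compact embedding of $H^1(\R^2)\cap L^1_{|x|^2}(\R^2)$ into $L^1(\R^2)\cap L^2(\R^2)$, the Aubin--Lions--Simon theorem gives strong $L^2((0,1)\times\R^2)$ compactness of the translates and permits the identification of the limits of $\sqrt{f_n}\grad\phi_{f_n}$, $\sqrt{g_n}\grad\phi_{g_n}$, hence that the limit solves \eqref{system_stat} with the regularity \eqref{regstatsol}. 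Working with unit-length time translates rather than single time slices is not cosmetic: the entropy dissipation is only integrated in time, so no $H^1$ bound holds at a fixed time $t_n$, and strong compactness of the slices $(f(t_n),g(t_n))$ cannot be extracted the way you suggest.

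A second, smaller gap sits in your final step: to conclude $\Ee(f(t_n),g(t_n))\to\Ee(F,G)$ you must pass to the limit in the confining term $\int_{\R^2} b\big(\tfrac{\rho}{\nu}f+g\big)\diff x$, and the uniform second-moment bound provided by the energy only gives lower semicontinuity (mass of $|x|^2 f(t_n)$ could a priori escape to infinity), i.e. $\Ee_\infty\ge\Ee(F,G)$, which you already know. You would need uniform integrability of the second moments (e.g. a uniformly bounded higher moment, which is not established), or else bypass this point as the paper does: conclude directly from the strong convergence of the translates to the unique stationary solution $(F,G)$ given by Theorem~\ref{thm:exisUniq_profil}, the whole-sequence convergence following from uniqueness (Remark~\ref{rmk:uniqueness}). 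Once these points are repaired, your convexity estimate does provide a clean passage from energy convergence to the asserted $L^2(\R^2;\R^2)$ convergence, consistent with the absence of any rate.
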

Theorem~\ref{thm:conv} guarantees the convergence of the solutions of (\ref{system2}) to the steady state but provides no information on the  rate of convergence. 
We conclude the paper by a numerical investigation concerning the convergence speed. 
The situation appears to be more intricate than for the (rescaled) single phase PME \eqref{Fok_Plank_PME} and, even though exponential convergence 
is always observed in our numerical tests, the rate strongly varies with the data and goes close to zero for some values of $\nu$. 
It is in particular rather unclear if there exists a uniform strictly positive minimum decay rate at which convergence occurs. 


\section{The steady states and convergence towards a minimizer}\label{ppte_pbCont}

We fix $M_f>0$ and $M_g>0$.

\subsection{Energy/energy dissipation for weak solutions}
Let $\omega \in L^1_{\rm loc}(\R^2;\R_+)$ with $\mathbb{R}_+=(0,\infty)$. For $r\ge 1$, we define $L^r_\omega(\R^2)$ as the set of measurable functions $\psi$ such that 
$$
\int_{\R^2} |\psi(y)|^r \omega(y) \diff y < \infty,
$$
which is a Banach space once equipped with the norm 
$$
\|\psi\|_{L^r_\omega(\R^2)} = \left(\int_{\R^2} |\psi(y)|^r \omega(y) \diff y\right)^{1/r}.
$$
For further use, we define the phase potentials 
\begin{equation}
\phi_f=f+g+b/\nu \;\text{ and }\; \phi_g=\rho f+g+b. \label{phasepot}
\end{equation}

\begin{deff}[weak solution]\label{defws}
Consider $(f_0,g_0)\in \K_{M_f}\times \K_{M_g}$. A pair $(f,g): \R^2\times\R_+ \to \R_+^2$ is a weak solution to the problem (\ref{system2}) if 
\begin{enumerate}[(i)]
\item $f$ and $g$ belong to $L^\infty(\R_+;L^2(\R^2))$ and to $L^\infty(\R_+;L^1_{|x|^2}(\R^2))$, 
\item $\grad f$ and $\grad g$ belong to $L^2_{\rm loc}(\R_+;L^2(\R^2))$,
\item $\sqrt{f}\nabla\phi_f$ and $\sqrt{g}\nabla\phi_g$ belong to $L^2(\R_+ \times \R^2)$
\item for all $\xi \in C^\infty_c(\R_+ \times \R^2)$, there holds 
$$
\int_{\R_+} \int_{\R^2} f \p_t \xi \diff x  \diff t + \int_{\R^2} f_0 \xi(0,\cdot) \diff x - \int_{\R_+} \int_{\R^2} \nu f \grad \phi_f \cdot \grad \xi \diff x \diff t= 0, 
$$
$$
\int_{\R_+} \int_{\R^2} g \p_t \xi \diff x  \d t + \int_{\R^2} g_0 \xi(0,\cdot) \diff x -  \int_{\R_+} \int_{\R^2} g \grad \phi_g \cdot \grad \xi \diff x \diff t= 0.
$$
\end{enumerate}
\end{deff}
The existence of a weak solution of the problem (\ref{system2}) and the decay of the energy $\Ee$ are given by the following theorem.
\begin{thm}\label{thm:exists}
Consider $(f_0,g_0)\in \K_{M_f}\times \K_{M_g}$. There exists a weak solution in the sense of Definition~\ref{defws}. Moreover, it satisfies the energy inequality
\be\label{eq:NRJ}
\Ee(f,g)(t) + \int_s^t \mathcal{I}(f,g)(\tau)\diff \tau \leq \Ee(f,g)(s), \quad t \ge s \ge 0,
\ee
with the energy dissipation $\mathcal I$ given by
 \be 
\mathcal{I}(f,g)=\int_{\R^2} \left(\nu \rho f |\grad \phi_f|^2 + g |\grad \phi_g|^2\right) \diff x,\label{eq:dissip}
\ee
and the entropy inequality
\begin{multline}\label{eq:entropy}
\mathcal{H}(f,g)(t) + \int_s^t \mathcal{D}(f,g)(\tau) \diff\tau \\ \le  \mathcal{H}(f,g)(s) 
+ \left( \frac{\rho M_f}{2} + \frac{\nu M_g}{2} \right)(t-s), \quad t \ge s \ge 0,
\end{multline}
where
\begin{align*}
\mathcal{H}(f,g)&=\int_{\R^2}\left(\rho f\ln{f}+ \nu g\ln{g}\right) \diff x, \\
\mathcal{D}(f,g)&=\nu\rho \|\nabla(f+g)\|_{L^2(\R^2)}^2+\nu(1-\rho) \|\nabla g\|_{L^2(\R^2)}^2.
\end{align*}
\end{thm}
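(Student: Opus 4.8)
The plan is to obtain the weak solution as the limit of approximate solutions constructed on bounded domains (or via a vanishing-viscosity / time-discretization scheme), and to pass to the limit using the a priori estimates that the energy and entropy structures provide. Concretely, I would first fix a large ball $B_R$, regularize the initial data (mollify and bound away from zero to avoid degeneracy), and solve a non-degenerate parabolic system on $B_R$ with no-flux boundary conditions; existence of such approximate solutions $(f_R^\varepsilon, g_R^\varepsilon)$ is classical (Galerkin or Schauder fixed point applied to the linearized problem, using that the mobilities are now uniformly elliptic). The heart of the matter is to derive bounds uniform in $R$ and $\varepsilon$, and here the two Lyapunov functionals do the work.

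The energy estimate comes from testing the $f$-equation with $\rho\,\phi_f$ and the $g$-equation with $\phi_g$ and adding: the cross terms match because $\p_F E = \rho\phi_f$ up to the $\nu$-weighting that is precisely absorbed in the definition of $\phi_f$, so that $\frac{d}{dt}\Ee(f,g) = -\int_{\R^2}\big(\nu\rho f|\grad\phi_f|^2 + g|\grad\phi_g|^2\big)\,dx = -\mathcal I(f,g)$. This is where one must be a little careful about integrability of the test functions $\phi_f,\phi_g$ on the approximate level (they contain $b=|x|^2/8$), which is why working first on $B_R$ is convenient: on the bounded domain everything is licit, and the resulting inequality~\eqref{eq:NRJ} survives the limit by weak lower semicontinuity of $\Ee$ (it is convex) and of the dissipation terms $\int \nu\rho f|\grad\phi_f|^2$ and $\int g|\grad\phi_g|^2$, once one checks the latter are weakly l.s.c.\ in the relevant topology (they are, being convex in $(\sqrt f\,\grad\phi_f)$ after writing $f|\grad\phi_f|^2 = |\sqrt f\,\grad\phi_f|^2$ and noting $\sqrt{f_R^\varepsilon}\grad\phi_{f}^{\varepsilon,R}$ is bounded in $L^2$). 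The energy bound immediately yields, uniformly, that $f,g$ are bounded in $L^\infty(\R_+;L^2)\cap L^\infty(\R_+;L^1_{|x|^2})$ — item~(i) — and that $\sqrt f\,\grad\phi_f,\sqrt g\,\grad\phi_g$ are bounded in $L^2(\R_+\times\R^2)$ — item~(iii). For the entropy estimate, test the $f$-equation with $\rho(\ln f + 1)$ and the $g$-equation with $\nu(\ln g +1)$; the diffusive parts generate $\nu\rho\int \grad(f+g)\cdot\grad f + \nu\int g\,\grad(\rho f + g)\cdot\frac{\grad g}{g}$, which after rearranging gives exactly $\mathcal D(f,g) = \nu\rho\|\grad(f+g)\|_2^2 + \nu(1-\rho)\|\grad g\|_2^2$ (a clean computation: $\rho\grad f\cdot\grad(f+g) + \grad g\cdot\grad(\rho f+g) = \rho|\grad(f+g)|^2 + (1-\rho)|\grad g|^2$), while the confining drift terms $-\int \grad b\cdot(\cdots)$ are controlled by $\int(\rho\Delta b\, f + \nu\Delta b\, g) = \tfrac{1}{2}(\rho M_f + \nu M_g)$ after integrating by parts and using mass conservation, giving~\eqref{eq:entropy}. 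The entropy dissipation $\mathcal D$ furnishes the $L^2_{\rm loc}(\R_+;H^1)$ bound on $f$ and $g$ — item~(ii) — which in turn, combined with the equations, gives $\p_t f,\p_t g$ bounded in some negative Sobolev space, hence strong $L^2_{\rm loc}$ compactness of $(f_R^\varepsilon,g_R^\varepsilon)$ by Aubin--Lions (on each $B_{R'}$, then diagonalize).

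With these compactness ingredients the passage to the limit in the weak formulation~(iv) is routine: test functions $\xi\in C^\infty_c$ have fixed compact support, $f_R^\varepsilon\to f$ strongly in $L^2_{\rm loc}$ and $\grad\phi_f^{\varepsilon,R}\rightharpoonup\grad\phi_f$ weakly (using $\grad b$ fixed and $\grad(f+g)$ weakly convergent), and the product $\sqrt{f_R^\varepsilon}\cdot\sqrt{f_R^\varepsilon}\grad\phi^{\varepsilon,R}_f$ passes to the limit because $\sqrt{f_R^\varepsilon}\to\sqrt f$ strongly in $L^2_{\rm loc}$ while $\sqrt{f_R^\varepsilon}\grad\phi^{\varepsilon,R}_f\rightharpoonup\sqrt f\,\grad\phi_f$ weakly in $L^2$. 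Nonnegativity of $f,g$ passes to the limit trivially, and mass conservation $\|f(t)\|_{L^1}=M_f$ follows from the $L^1_{|x|^2}$ bound plus testing with cutoffs of $\mathbf 1$. I expect the main obstacle to be not the limit passage but making the formal energy and entropy identities rigorous on the approximate level — in particular justifying the use of $\phi_f$, $\phi_g$, and $\ln f$ as test functions when $f$ may vanish or the domain is unbounded — which is precisely why the $\varepsilon$-regularization (keeping the approximate densities bounded below and the approximate $\ln$ truncated) on the bounded domain $B_R$ is the right scaffolding: on that level all manipulations are classical, and only the final inequalities, not the identities, need to survive the limit, which they do by lower semicontinuity.
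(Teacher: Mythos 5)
Your formal computations are exactly the right ones, and they match the structure behind \eqref{eq:NRJ} and \eqref{eq:entropy}: $(\rho\phi_f,\phi_g)$ are indeed the variational derivatives of $E$, the identity $\rho\nabla f\cdot\nabla(f+g)+\nabla g\cdot\nabla(\rho f+g)=\rho|\nabla(f+g)|^2+(1-\rho)|\nabla g|^2$ produces $\mathcal D$, and $\Delta b=1/2$ together with mass conservation yields the term $(\rho M_f+\nu M_g)/2$. Note, however, that the paper does not prove Theorem~\ref{thm:exists} this way at all: it invokes the JKO (minimizing movement) scheme of Lauren\c{c}ot and Matioc \cite{laurencot,laurencot2}, observes that the quadratic confining potentials can be incorporated without difficulty, and obtains the entropy inequality and the $L^2_{\rm loc}(H^1)$ bounds via the flow interchange technique of \cite{MMS09}. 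So you are proposing an independent proof, and it has a genuine gap.

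The gap is the construction of the approximate solutions, which you dismiss as ``classical''. System \eqref{system2} is a cross-diffusion system whose diffusion matrix $\begin{pmatrix}\nu f&\nu f\\ \rho g& g\end{pmatrix}$ is neither symmetric nor uniformly elliptic, and such systems admit no comparison or minimum principle: mollifying the data and ``bounding it away from zero'' does not keep the solution bounded away from zero at later times, so the regularized problem on $B_R$ is not non-degenerate uniformly in time, and uniform parabolicity must instead be enforced by an additional $\varepsilon\Delta$ (or an equivalent device), whose compatibility with the energy/entropy structure then has to be checked. Moreover, a Galerkin or naive fixed-point construction does not close: testing the equations with $f$ and $g$ produces cross terms like $\nu\int f\,\nabla g\cdot\nabla f$ that cannot be absorbed without $L^\infty$ bounds you do not have, while the admissible test functions that do close the estimates ($\rho\phi_f$, $\phi_g$, $\ln f$) are not at your disposal at the approximate level (they do not belong to the Galerkin space, and $\ln f$ requires strict positivity). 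This is precisely why the existing existence proofs go through the JKO scheme, where the energy inequality is automatic from the variational time step and the entropy inequality is extracted by flow interchange rather than by testing with $\ln f$. Your limit passage (lower semicontinuity of $\Ee$ and of the dissipation written as $\|\sqrt f\,\nabla\phi_f\|_{L^2}^2$, Aubin--Lions via the $H^1$ and dual bounds) is sound, but as written the scaffolding it rests on has not been built.
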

Since $u \log(u) \leq u^2$ for $u \geq 0$, one has 
\be\label{eq:entropy_up}
\mathcal{H}(f,g) (t)\leq\rho\|f(t)\|_{L^2(\R^2)}^2 + \nu\|g(t)\|_{L^2(\R^2)}^2, \quad \text{for a.e.}\;t \in \R_+.
\ee
On the other hand, by~\cite[Eq. (2.14)]{Arkeryd72},
\be\label{eq:entropy_low}
\mathcal{H}(f,g)(t) \geq - \pi(\rho+\nu)+\rho \|f(t)\|_{L^1_{|x|^2}(\R^2)} + \nu \|g\|_{L^1_{|x|^2}(\R^2)}, 
\quad \text{for a.e.}\;t \in \R_+.
\ee
Combining~\eqref{eq:entropy_up} and \eqref{eq:entropy_low}, one gets that $\mathcal{H}(f,g)$ belongs to 
$L^\infty(\R_+)$, so that~\eqref{eq:entropy} makes sense for almost every $t\geq s\geq 0$.

The existence of a weak solution was proven  by Lauren\c{c}ot and Matioc in \cite{laurencot,laurencot2} by proving the convergence of a JKO scheme without the confining potentials $b/\nu$ and $b$, but the proof can be extended in the presence of these quadratic confining potentials without particular difficulties. The $L_{\rm loc}^2(L^2)$ estimates on $\grad f$ and $\grad g$ and the entropy inequality \eqref{eq:entropy} are obtained thanks to 
the flow interchange technique of Matthes \textit{et al.} \cite{MMS09}. 

An important consequence of Theorem~\ref{thm:exists} is that, if $(F,G)\in \K_{M_f}\times \K_{M_g}$ is a stationary solution to \eqref{system2} such that
\begin{equation}
(F,G)\in H^1(\R^2;\R^2) \;\text{ and }\; (\sqrt{F}\grad\phi_F,\sqrt{G}\grad\phi_G)\in L^2(\R^2;\R^4), \label{regstatsol}
\end{equation} 
then we infer from \eqref{eq:NRJ} and the nonnegativity of $\mathcal{I}$ that $\mathcal{I}(F,G)=0$ (recall that $\phi_F$ and $\phi_G$ are defined by \eqref{phasepot}). In other words, $F$ and $G$ have vanishing fluxes and
\begin{equation}
\begin{cases}
F\nabla(F+G+b/\nu)&=0 \quad\text{in}\quad\R^2, \\
G\nabla(\rho F+G+b)&=0 \quad\text{in}\quad \R^2.
\end{cases}
\label{system_stat}
\end{equation}

\subsection{Existence and properties of the minimizer of the energy}

As already mentioned, the problem \eqref{system} can be interpreted as a two phase generalization of the PME \eqref{eq_mil_por_LargeTime}, and its long time behavior is expected to share some common features with that equation. In particular, since the Barenblatt profile $u_\infty$ given by~\eqref{eq:Barenblatt} is the unique minimizer of the energy 
functional~\eqref{eq:NRJ-PME} under a mass constraint, we are led to consider the following minimization problem
\be  \inf_{(f,g)\in \K_{M_f}\times \K_{M_g}} \Ee (f, g). \label{pb_min}\ee 
Owing to the energy inequality \eqref{eq:NRJ}, a minimizer $(F,G)$ of $\Ee$ in $\K_{M_f}\times \K_{M_g}$ is obviously a stationary solution to \eqref{system2} and thus satisfies \eqref{system_stat}.

In order to prove the uniqueness of the minimizer in~\eqref{pb_min}, 
we need the strict convexity of the energy functional $\Ee$.
\begin{prop}
$\Ee$ is a strictly convex function on $\K_{M_f}\times \K_{M_g}$.
\label{srictconvex}
\end{prop}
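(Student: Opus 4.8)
The plan is to prove the convexity of $\Ee$ by examining the integrand $E$ from~\eqref{E_NRJ} as a quadratic form plus a linear term in the pair $(f,g)$. Since $b(\rho f/\nu + g)$ is linear in $(f,g)$ and integrates to the fixed quantity $\rho M_f/\nu + M_g$ on $\K_{M_f}\times\K_{M_g}$ (so it is actually constant on the constraint set, though affine in general), the convexity of $\Ee$ is governed entirely by the quadratic part
\[
Q(f,g) = \int_{\R^2} \left( \frac{\rho}{2}(f+g)^2 + \frac{1-\rho}{2} g^2 \right) \diff x.
\]
First I would observe that the associated symmetric matrix of the pointwise quadratic form, namely
\[
A = \begin{pmatrix} \rho & \rho \\ \rho & 1 \end{pmatrix},
\]
has determinant $\rho(1-\rho) > 0$ and trace $\rho + 1 > 0$ since $\rho \in (0,1)$, hence $A$ is positive definite. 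This already gives convexity of $\Ee$; for \emph{strict} convexity on the constraint set I must do slightly more, because $Q$ being strictly convex as a function on $L^2\times L^2$ requires a uniform lower bound $A \ge c\,\Id$ with $c>0$, which indeed holds with $c = \lambda_{\min}(A) = \tfrac{1}{2}\big(1+\rho - \sqrt{(1-\rho)^2 + 4\rho^2}\big) > 0$.

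The key steps, in order, are: (1) write $\Ee(f,g) = Q(f,g) + \ell(f,g)$ where $\ell$ is affine; (2) for $(f_0,g_0) \ne (f_1,g_1)$ in $\K_{M_f}\times\K_{M_g}$ and $\theta \in (0,1)$, use the pointwise strict convexity of the quadratic form $v \mapsto v^\top A v$ on $\R^2$, i.e.\ the identity
\[
\theta\, v_0^\top A v_0 + (1-\theta) v_1^\top A v_1 - (\theta v_0 + (1-\theta) v_1)^\top A (\theta v_0 + (1-\theta) v_1) = \theta(1-\theta)(v_0 - v_1)^\top A (v_0 - v_1),
\]
applied with $v_i = (f_i(x), g_i(x))$; (3) integrate this identity over $\R^2$, noting the affine part $\ell$ contributes nothing to the defect; (4) bound the right-hand side below by $\theta(1-\theta)\,c\,\|(f_0,g_0)-(f_1,g_1)\|_{L^2(\R^2;\R^2)}^2$, which is strictly positive whenever $(f_0,g_0) \ne (f_1,g_1)$. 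This yields $\Ee(\theta (f_0,g_0) + (1-\theta)(f_1,g_1)) < \theta \Ee(f_0,g_0) + (1-\theta)\Ee(f_1,g_1)$, which is strict convexity.

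I do not anticipate a genuine obstacle here — the statement is essentially a positive-definiteness check for a $2\times 2$ matrix — but the one point requiring mild care is making sure all the integrals involved are finite so that the algebraic manipulations are legitimate: this is guaranteed because every element of $\K_M$ lies in $L^2(\R^2) \cap L^1(\R^2,(1+|x|^2)\diff x)$, so $Q(f,g) < \infty$ and the linear term $\int_{\R^2} b(\rho f/\nu + g)\,\diff x$ is finite as well. A second minor subtlety worth a sentence is that strict convexity is a statement about the functional on the convex set, and one should note that $\K_{M_f}\times\K_{M_g}$ is indeed convex (which is already recorded in the excerpt) so that the convex combinations stay in the domain.
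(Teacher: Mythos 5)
Your proposal is correct and follows essentially the same route as the paper: the paper also reduces the matter to the pointwise Hessian $D^2E=\begin{pmatrix}\rho&\rho\\ \rho&1\end{pmatrix}$ (the linear confinement term contributing nothing), checks $\det D^2E=\rho(1-\rho)>0$ and $\operatorname{tr} D^2E=1+\rho>0$, and concludes strict convexity of $E$ and hence of $\Ee$. Your extra steps (the explicit convexity-defect identity, the $\lambda_{\min}$ lower bound, and the integrability remarks) only spell out in more detail what the paper leaves implicit.
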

\begin{proof}
If $E$ is strictly convex then $\Ee$ is strictly convex. We denote by $D^2 E$ the Hessian matrix of $E$. Then 
$$
D^2 E=
\begin{pmatrix} 
\rho & \rho \\
\rho & 1
\end{pmatrix}.$$ 
Recalling that $\rho\in (0,1)$, the matrix $D^2 E$ is symmetric with $\text{det}(D^2 E)=\rho (1-\rho)>0$ and $\text{tr}(D^2 E)=1+\rho>0.$ 
We deduce that $D^2E$ is definite positive and  hence $E$ is strictly convex.
\end{proof}

Let us now introduce some material that will be needed to prove the existence of a minimizer of~\eqref{pb_min}. There exist $C_\star>0$ and $C^\star>0$ depending only on $\rho$ and $ \nu$ such that
\be  C_\star (\Nn(f)+\Nn(g))\leq \Ee(f, g)\leq C^\star (\Nn(f)+\Nn(g)) , \label{controlnormeL2} \ee 
where $$\Nn(h)=\|h\|^2_{L^2(\R^2)}+\|h\|_{L^1_{|x|^2}(\R^2)},\quad \text{for}\quad h\in L^2(\R^2)\cap L^1_{|x|^2}(\R^2).$$
Indeed, on the one hand, since $f$, $g$ and $b$ are nonnegative, there holds 
\begin{align*}
E(f,g)&=\dfrac{\rho}{2}(f+g)^2+\dfrac{1-\rho}{2}g^2+b(\dfrac{\rho}{\nu}f+g)\\&
 \geq \dfrac{\rho}{2}(f^2+\dfrac{1}{\nu}bf)+\dfrac{1}{2}(g^2+bg).
\end{align*}
On the other hand, using $(u+v)^2\leq 2(u^2 + v^2)$, we have
$$E(f,g)\leq \rho f^2+\dfrac{1+\rho}{2}g^2+\dfrac{\rho}{\nu}bf+bg,$$
which implies (\ref{controlnormeL2}).
This motivates the introduction of the Banach space 
\[
X = L^2(\R^2) \cap L^1_{|x|^2}(\R^2) \qquad \text{with}\quad \|\cdot\|_X = \|\cdot\|_{L^2} + \|\cdot\|_{L^1_{|x|^2}}.
\]
We say that a sequence $\left(u_n\right)_{n\geq 1}$ converges in $X$ in the weak-$\star$ sense towards $u$ if $u_n$ converges to $u$ weakly in $L^2(\R^2)$ and if the densities of the moments of order~$2$ $x\mapsto u_n(x)|x|^2$ converge weakly in the sense of finite measures (i.e., in the dual of the space $C_0(\R^2)$ of the continuous functions 
decaying to $0$ as $|x|\to\infty$) towards $x\mapsto u(x)|x|^2$. 
Then any bounded sequence in $X$ is relatively compact in $X$ for the weak-$\star$ topology.

We can now go to the following statement. 
\begin{prop}\label{prop:minimizer}
There exists a unique minimizer $(F, G)$ of $\Ee$ in $\K_{M_f}\times \K_{M_g}$. Moreover, $F$ and $G$ belong to $H^1(\R^2)$ while $\sqrt{F}\phi_F$ and $\sqrt{G}\phi_G$ belong to $L^2(\R^2)$ and it satisfies  \eqref{system_stat}, the fluxes $\phi_F$ and $\phi_G$ being defined in \eqref{phasepot}.
\end{prop}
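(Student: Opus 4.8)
The plan is to establish existence by the direct method in the calculus of variations, then uniqueness via the strict convexity already proven in Proposition~\ref{srictconvex}, and finally to extract the stated regularity ($H^1$ bounds and integrability of $\sqrt{F}\phi_F$, $\sqrt{G}\phi_G$) as a byproduct of the weak-solution machinery in Theorem~\ref{thm:exists}. For existence, I would take a minimizing sequence $(f_n,g_n)\in\K_{M_f}\times\K_{M_g}$ for $\Ee$. The lower bound in~\eqref{controlnormeL2} shows that $\Ee(f_n,g_n)$ controls $\Nn(f_n)+\Nn(g_n)$, hence $(f_n)$ and $(g_n)$ are bounded in the Banach space $X$; by the stated weak-$\star$ compactness of bounded sets of $X$, we may pass (along a subsequence) to limits $F,G$ with $f_n\rightharpoonup F$, $g_n\rightharpoonup G$ weakly in $L^2(\R^2)$ and with the order-2 moment densities converging weakly as measures. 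The constraints are preserved in the limit: nonnegativity passes to weak-$L^2$ limits, the $L^1_{|x|^2}$-bound is inherited from weak-$\star$ convergence, and the mass constraints $\|F\|_{L^1}=M_f$, $\|G\|_{L^1}=M_g$ follow because the order-2 moments are uniformly integrable (no mass escapes to infinity, since $\int u_n |x|^2\,\diff x$ is bounded and the limiting measure has no singular part), so one can test against approximations of $\1_{\R^2}$ and use tightness.

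The key step is then lower semicontinuity of $\Ee$ along this convergence. Write $\Ee(f,g)=\int_{\R^2}\bigl[\tfrac{\rho}{2}(f+g)^2+\tfrac{1-\rho}{2}g^2\bigr]\diff x+\int_{\R^2} b\bigl(\tfrac{\rho}{\nu}f+g\bigr)\diff x$. The first integral is a continuous convex functional of $(f,g)$ on $L^2$, hence weakly lower semicontinuous on $L^2(\R^2;\R^2)$. The second integral is linear in $(f,g)$ and, since $b(x)=|x|^2/8\ge 0$, it is exactly (a constant multiple of) the total mass of the order-2 moment measures, which is weak-$\star$ lower semicontinuous by Portmanteau (lower semicontinuity of mass under weak convergence of nonnegative measures). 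Adding these, $\Ee(F,G)\le\liminf_n\Ee(f_n,g_n)=\inf$, so $(F,G)$ is a minimizer. Uniqueness is immediate: $\K_{M_f}\times\K_{M_g}$ is convex and $\Ee$ is strictly convex by Proposition~\ref{srictconvex}, so the minimizer is unique.

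It remains to recover the regularity and the Euler--Lagrange/stationarity information. Since $(F,G)$ minimizes $\Ee$ over $\K_{M_f}\times\K_{M_g}$ and the energy inequality~\eqref{eq:NRJ} holds for the weak solution of~\eqref{system2} emanating from the initial datum $(F,G)$, the nonnegativity of the dissipation $\mathcal I$ forces $\mathcal I(F,G)=0$; this gives~\eqref{system_stat}, i.e. $F\grad\phi_F=0$ and $G\grad\phi_G=0$ a.e., and simultaneously shows $\sqrt{F}\grad\phi_F=\sqrt{G}\grad\phi_G=0$, so in particular these belong to $L^2$. For the $H^1$ bounds, I would invoke the $L^2_{\rm loc}(L^2)$ gradient estimates and the entropy inequality~\eqref{eq:entropy}: applying~\eqref{eq:entropy} with the stationary datum over an interval $[s,t]=[0,T]$ and dividing by $T$, the time-independence of $(F,G)$ together with the bound $\mathcal H\in L^\infty$ forces $\mathcal D(F,G)\le \tfrac{\rho M_f}{2}+\tfrac{\nu M_g}{2}$, which controls $\|\grad(F+G)\|_{L^2}$ and $\|\grad G\|_{L^2}$, hence $\|\grad F\|_{L^2}$ as well; combined with the $L^2$ bound from~\eqref{controlnormeL2} this yields $F,G\in H^1(\R^2)$. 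Finally, the $L^2$-membership of $\sqrt{F}\phi_F$ (rather than merely $\sqrt{F}\grad\phi_F$) follows from $\phi_F\in L^2_{\rm loc}$, the support considerations, and the fact that $F$ has finite mass and finite second moment, which bounds $\int F|\phi_F|^2$ once one knows $\phi_F$ is controlled on the support of $F$; similarly for $\sqrt{G}\phi_G$. The main obstacle I anticipate is the tightness argument ensuring the mass constraints survive the weak-$\star$ limit — one must rule out loss of mass at infinity — but this is exactly what the uniform second-moment bound provides, so it is a routine (if slightly delicate) application of de la Vallée-Poussin/Prokhorov-type reasoning.
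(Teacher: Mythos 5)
Your existence and uniqueness argument is essentially the paper's: a minimizing sequence is bounded in $X$ by \eqref{controlnormeL2}, one extracts a weak-$\star$ limit in $\K_{M_f}\times\K_{M_g}$, uses lower semicontinuity of $\Ee$, and concludes uniqueness from the strict convexity of Proposition~\ref{srictconvex}; you simply flesh out the tightness and semicontinuity details that the paper states without proof. The regularity/stationarity part also follows the paper's route (run the evolution \eqref{system3} starting from the minimizer and exploit \eqref{eq:NRJ} and \eqref{eq:entropy}), but one compressed step does not follow as written: you evaluate the dissipation and the entropy production \emph{at} $(F,G)$, claiming ``$\mathcal I(F,G)=0$'' and ``$\mathcal D(F,G)\le \tfrac{\rho M_f}{2}+\tfrac{\nu M_g}{2}$'', whereas \eqref{eq:NRJ} and \eqref{eq:entropy} only control the trajectory $(\check f(t),\check g(t))$ of the weak solution emanating from $(F,G)$. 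Your phrase ``the time-independence of $(F,G)$'' presupposes that this trajectory stays at $(F,G)$, which is essentially the fact being proved. The paper closes this loop explicitly: by \eqref{eq:NRJ} and minimality, $\Ee(\check f(t),\check g(t))=\Ee(F,G)$ for all $t\ge 0$, hence $(\check f(t),\check g(t))$ is itself a minimizer and, by uniqueness (strict convexity), coincides with $(F,G)$ for every $t$; only after this identification do the regularity properties of Theorem~\ref{thm:exists} and the vanishing of the time-integrated dissipation transfer to $(F,G)$, giving \eqref{regstatsol} and \eqref{system_stat}. Your derivation of the $H^1$ bound from the entropy inequality is then a perfectly acceptable variant of the paper's appeal to the $L^2_{\rm loc}(L^2)$ gradient estimate, since both come from Theorem~\ref{thm:exists}.

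A smaller point: your separate argument for $\sqrt{F}\phi_F\in L^2(\R^2)$ is both hand-wavy (``once one knows $\phi_F$ is controlled on the support of $F$'' is not available at this stage, since compact support is only proved later) and unnecessary: the quantity actually needed in \eqref{regstatsol} and used afterwards is $\sqrt{F}\grad\phi_F$, which you have already shown vanishes identically, hence trivially lies in $L^2$; the statement's ``$\sqrt{F}\phi_F$'' should be read with the gradient, and a direct bound on $\int F|\phi_F|^2\diff x$ would anyway require a fourth moment of $F$ that is not controlled here.
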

\begin{proof}
The uniqueness of the minimizer follows from the strict convexity of the energy functional $\Ee$ proved in Proposition \ref{srictconvex}.

Let us now prove the existence of a minimizer. To this end, pick a minimizing sequence $(f_k, g_k)_{k\ge 1}\in \K_{M_f}\times \K_{M_g}$. Thanks to (\ref{controlnormeL2}) there exists a constant $C>0$ such that 
\be \|f_k\|_{X}+\|g_k\|_{X}\le C, \; \forall k\ge 1. \label{suite_min}\ee
We obtain that there exist $(F, G)\in L^2(\R^2; \R_+)^2$ and a subsequence of  $(f_k, g_k)_{k\ge 1}$ (denoted again by  $(f_k, g_k)_{k\ge 1}$) such that $$ f_k \to F \;\; \text{weakly-$\star$ in}\;\;X \quad  \text{and}\; \;g_k \to G \;\; \text{weakly-$\star$ in}\;\; X.$$
This convergence implies in particular that $\|F\|_{L^1(\R^2)}=M_f$ and $\|G\|_{L^1(\R^2)}=M_g$, hence $(F,G)\in \K_{M_f}\times \K_{M_g}$.

Moreover, the energy functional $\Ee$ is lower semi-continuous for the weak-$\star$ topology of $X$. Thus,
$$ \Ee(F, G)\le \underset{k \to\infty}{\rm liminf}\; \Ee(f_k, g_k),$$ 
so that $(F, G)$ is a minimizer of $\Ee$ in $\K_{M_f}\times \K_{M_g}$. 

Let us now show that~\eqref{system_stat} holds. To this end, define $(\check{f},\check{g})$ as a solution of the evolutionary system
\begin{equation}
\begin{cases}
\partial_t \check{f} -\nabla{\cdot}\Big(\nu \check{f}\nabla(\check{f}+\check{g}+b/\nu)\Big)=0 \quad &\text{in}\quad(0,\infty)\times\R^2, \\
\partial_t \check{g}-\nabla{\cdot}\Big(\check{g}\nabla(\rho \check{f}+\check{g}+b)\Big)=0 \quad &\text{in}\quad (0,\infty)\times\R^2,\\
\check{f}_{|t=0}=F,\quad \check{g}_{|t=0}=G \quad  &\text{in} \quad \R^2.
\end{cases}
\label{system3}
\end{equation}
 Using \eqref{eq:NRJ} one has
$$
\Ee(\check{f},\check{g})(t) + \int_0^t \mathcal I(\check{f},\check{g})(\tau)\diff \tau \leq \Ee(F,G), 
$$
with 
$$ 
\mathcal{I}(\check{f},\check{g})=\int_{\R^2} \left(\nu \rho\check{f} |\grad \phi_{\check{f}}|^2 + \check{g} |\grad \phi_{\check{g}}|^2\right) \diff x,
$$ 
where $\phi_{\check{f}}=\check{f}+\check{g}+b/\nu$ and $\phi_{\check{g}}=\rho \check{f}+\check{g}+b$. 
Since $(F,G)$ is a minimizer of $\Ee$ in $\K_{M_f}\times \K_{M_g}$ and $(\check{f}(t),\check{g}(t))$ belongs to $\K_{M_f}\times \K_{M_g}$ for all $t\ge 0$, we deduce from the nonnegativity of  $\mathcal{I}$ that $\Ee(\check{f}(t),\check{g}(t))=\Ee(F,G)$ and $\mathcal{I}(\check{f}(t),\check{g}(t))=0$ for a.e. $t>0$. Owing to the minimizing property of $(F,G)$, the first identity readily implies that $(\check{f}(t), \check{g}(t))=(F,G)$ for all $t\ge 0$. On the one hand, it follows from Theorem~\ref{thm:exists} that $(F,G)$ enjoys the regularity properties listed in Proposition~\ref{prop:minimizer}. On the other hand, we conclude that $(F,G)$ satisfies $\mathcal{I}(F,G)=0$ and thus \eqref{system_stat}. 
\end{proof}

With Proposition~\ref{prop:minimizer} at hand, we are now interested in the regularity of stationary solutions $(F,G)$ to \eqref{system2} in $\K_{M_f}\times \K_{M_g}$ and in the description of the positivity sets $E_F$ and $E_G$ of $F$ and $G$ defined by
\[ 
E_F=\{x\in \R^2 \; :\; F(x)>0\},\quad E_G=\{x\in \R^2\; :\; G(x)>0\}.
\]

\begin{lem}\label{lem:radial}
Let $(F,G)\in \K_{M_f}\times \K_{M_g}$ be a solution of \eqref{system_stat} enjoying the regularity properties \eqref{regstatsol}. Then
\begin{enumerate}[(i)]
\item $ E_F\cap E_G\neq \emptyset$, 
\item $F$ and $G$ are locally Lipschitz continuous and radially symmetric.
\end{enumerate}
\end{lem}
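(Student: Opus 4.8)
The plan is to exploit the two pointwise equations in \eqref{system_stat} to first pin down the structure of the potentials $\phi_F$ and $\phi_G$ on the positivity sets, then to bootstrap regularity, and finally to use a rearrangement/uniqueness argument for radial symmetry. First I would observe that on the open set $E_F$ one has $F>0$, so the first equation of \eqref{system_stat} forces $\grad\phi_F=0$ a.e.\ on $E_F$; since $F\in H^1(\R^2)$ (hence $\phi_F\in H^1_{\rm loc}$) this means $\phi_F$ is locally constant on $E_F$, i.e.\ constant on each connected component. Likewise $\phi_G$ is constant on each connected component of $E_G$. Writing $\phi_F = F+G+b/\nu$ and $\phi_G=\rho F + G + b$, on a component of $E_F$ we get $F + G = \lambda_F - b/\nu$ for a constant $\lambda_F$, and on a component of $E_G$ we get $\rho F + G = \lambda_G - b$. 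Solving the two linear relations on $E_F\cap E_G$ (if nonempty) gives $F$ and $G$ as explicit affine functions of $b=|x|^2/8$ there, hence smooth; off $E_G$ (but inside $E_F$) we have $G=0$ and $F=\lambda_F - b/\nu$, again explicit; similarly with the roles swapped. This already yields local Lipschitz continuity once we know the geometry is reasonable, since $F$ and $G$ are continuous (being in $H^1$ they have a continuous representative wherever they are given by smooth formulas, and they vanish continuously at the boundary of the positivity sets because $b$ is continuous and the constants are matched by the values).

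For part (i), that $E_F\cap E_G\neq\emptyset$: I would argue by contradiction. Suppose $E_F\cap E_G=\emptyset$. Then on $E_F$ we have $F = \lambda_F - b/\nu$ on each component and $G\equiv 0$ there, while on $E_G$ we have $G = \lambda_G - b$ on each component and $F\equiv 0$. Each such component, being a superlevel set of the strictly convex function $b$, is a disc (or an annulus, but connectedness plus the fact that $b\to\infty$ rules out annuli for the relevant sign), and outside the union $F=G=0$. The key point is a contradiction with the stationarity in the weak sense, or more simply with the minimization characterization: if the supports were disjoint one could decrease the energy $\Ee$ by, say, translating one bump toward the other so that they start to overlap, using the interaction term $b(\rho f/\nu + g)$ — actually the cleanest route is to note that a configuration with disjoint supports cannot be a critical point of $\Ee$ under the mass constraints because the "pressures'' (the constants $\lambda_F$, $\lambda_G$ via Lagrange multipliers) would be inconsistent with the Euler--Lagrange conditions, which require $\phi_F\ge\lambda_F$ on $E_F^c$ and $\phi_G\ge\lambda_G$ on $E_G^c$; checking these inequalities at the interface between the two disjoint supports yields a contradiction. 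I expect this step to require care, so I would keep in reserve the more robust argument: use the evolutionary system \eqref{system3} with $(F,G)$ as datum — since $(F,G)$ is stationary, the vanishing-flux equations hold, and a short computation on the explicit disjoint-support profile shows the flux cannot actually vanish at a point where one phase meets the quadratic potential of the other.

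For part (ii), radial symmetry: with (i) in hand and the explicit affine description of $F$ and $G$ on the (finitely many) pieces determined by the discs $\{b < \mu\}$, local Lipschitz continuity is immediate. For radial symmetry I would invoke the minimization characterization from Proposition~\ref{prop:minimizer}: $(F,G)$ is the \emph{unique} minimizer of $\Ee$ in $\K_{M_f}\times\K_{M_g}$. Since $\Ee$ is built from $b=|x|^2/8$ (rotation invariant) and local pointwise terms in $f,g$, and since simultaneous Schwarz symmetrization of $(f,g)$ about the origin does not increase $\Ee$ — the $b$-weighted terms decrease or stay equal by the classical Hardy--Littlewood/Riesz rearrangement inequality, and the local convex terms $\frac{\rho}{2}(f+g)^2 + \frac{1-\rho}{2}g^2$ are unchanged under measure-preserving rearrangement (one must symmetrize $f+g$ and $g$ consistently, which is possible here because both are handled by the same rearrangement) — the unique minimizer must coincide with its symmetric rearrangement, hence is radially symmetric and radially nonincreasing. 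The main obstacle is making the simultaneous rearrangement argument rigorous: one cannot independently symmetrize $f$ and $g$, so I would instead symmetrize the pair $(s,g)$ with $s:=f+g$ jointly (noting $0\le g\le s$ is preserved by a common layer-cake rearrangement) and recover $f = s^\star - g^\star\ge 0$; strict inequality in the Riesz rearrangement inequality unless the function is already a translate of its symmetrization, combined with uniqueness of the minimizer, closes the argument. I expect the joint-rearrangement bookkeeping and the strict-inequality/equality-case analysis to be the delicate part; part (i) is the conceptual crux.
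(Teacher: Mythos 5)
Your structural observations (constancy of $\phi_F$ on components of $E_F$, the affine-in-$b$ formulas, and Lipschitz continuity from them) match the paper, but both of your key steps lean on $(F,G)$ being the \emph{minimizer} of $\Ee$, and that is a genuine gap. The lemma is stated for an arbitrary solution of \eqref{system_stat} with the regularity \eqref{regstatsol}; in the paper it is applied to cluster points of the rescaled flow, which at that stage are \emph{not} known to be minimizers --- the identification ``stationary $=$ minimizer'' is only obtained afterwards (Remark~\ref{rmk:uniqueness}), using this very lemma and the explicit classification. So invoking Euler--Lagrange inequalities such as $\phi_F\ge\lambda_F$ on $E_F^c$, or uniqueness of the minimizer plus Schwarz symmetrization, is circular in the role the lemma has to play. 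Moreover the variational steps themselves are shaky even for minimizers: translating one bump toward the other does not obviously lower $\Ee$ (the confinement $b$ penalizes moving mass off the origin, and overlap \emph{increases} the cross term $\rho fg$ in $\tfrac{\rho}{2}(f+g)^2$); and in the rearrangement argument the confining term $\int b\bigl(\tfrac{\rho}{\nu}f+g\bigr)\diff x = \int b\bigl(\tfrac{\rho}{\nu}(f+g)+(1-\tfrac{\rho}{\nu})g\bigr)\diff x$ has a negative coefficient on $g$ when $\nu<\rho$, so symmetrizing the pair $(f+g,g)$ does not give the desired inequality in that regime (and separate symmetrization of $f$ and $g$ increases $\int fg$).

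The frustrating part is that your own first paragraph already contains the elementary proof, which is the paper's. For (i): if $E_F\cap E_G=\emptyset$, then $F\nabla(F+b/\nu)=0$ and $G\nabla(G+b)=0$, so $F$ and $G$ are truncated parabolas $\lambda-b/\nu$, $\lambda'-b$ on each component; their positivity sets are sublevel sets of $b$, i.e.\ discs centred at the origin, hence $0\in E_F\cap E_G$, a contradiction --- no minimality needed. For (ii): the paper avoids any symmetrization by noting (via Stampacchia, $\nabla F=0$ a.e.\ on $\{F=0\}$ and likewise for $G$, which also spares you the unproven openness/continuity of $E_F$, $E_G$ that your ``locally constant on the open set $E_F$'' step presupposes) that on each of the four sets $E_F\cap E_G$, $E_F\cap E_G^c$, $E_F^c\cap E_G$, $E_F^c\cap E_G^c$ the a.e.\ gradients $\nabla F$, $\nabla G$ are explicit multiples of $\nabla b(x)=x/4$; local boundedness gives Lipschitz continuity, and $\nabla F\cdot x^\perp=\nabla G\cdot x^\perp=0$ a.e.\ gives radial symmetry directly, for every stationary solution and every value of $\nu$.
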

\begin{proof}
(i) Assume for contradiction that $E_F\cap E_G= \emptyset$. Then $F\nabla G=G\nabla F=0$ in $\R^2$ and it follows from \eqref{system_stat} that   $(F,G)$  satisfy the equations 
\begin{equation*}
\begin{cases}
F\nabla(F+b/\nu)&=0 \quad\text{in}\quad\R^2, \\
G\nabla(G+b)&=0 \quad\text{in}\quad \R^2,
\end{cases}
\end{equation*}
hence $F$ and $G$ are Barenblatt solutions centred at $0$. Thus $0\in E_F\cap E_G=\emptyset$, yielding a contradiction.

\noindent(ii) Let us prove that $(F,G)$ are locally Lipschitz continuous and radially symmetric. We have
$$ \R^2= (E_F\cap E_G)\cup(E_F\cap E_G^c)\cup (E_F^c\cap E_G)\cup (E_F^c \cap E_G^c).$$
Since $F$ and $G$ belong to $H^1(\R^2)$ by \eqref{regstatsol}, it follows from Stampacchia's theorem that
$$ \nabla F=0 \; \text{a.e. in}\; E_F^c \quad \text{and}\;\; \nabla G=0 \; \text{a.e. in} \; E_G^c.$$
Therefore 
\be 
\nabla G=-\nabla b\;\; \text{a.e in}\;\; E_F^c\cap E_G,\;
\nabla F=-\nabla b/\nu \;\; \text{a.e on}\;\; E_G^c\cap E_F, \label{F_G_sur_compl}\ee
\be 
\nabla F=\dfrac{\nu-1}{\nu(1-\rho)}\nabla b \;\; \text{and} \;\; \nabla G=\dfrac{\rho-\nu}{\nu(1-\rho)}\nabla b\;\; \text{a.e in} \;\; E_F\cap E_G.
\label{F_G_sur_intersec}\ee 
Since $\nabla b(x)=x/4$ is locally bounded in $\R^2$, both $F$ and $G$ are locally Lipschitz continuous in $\R^2$. In addition, $\nabla F(x)\cdot x^\bot=\nabla G(x)\cdot x^\bot=0$ for almost all $x=(x_1,x_2)\in\R^2$ with $x^\bot=(-x_2,x_1)$. Consequently, $F$ and $G$ are radially symmetric. 
\end{proof}

According to the discussion above the profiles $(F,G)$ of self-similar solutions of (\ref{system}) defined in (\ref{changt_autoSimil}) are stationary solutions of \eqref{system2} and  satisfy \eqref{regstatsol} and \eqref{system_stat}. Moreover $(F,G)$ are radially symmetric. Then we can express $F$ and $G$ as functions of $r = |x|$. Thanks to (\ref{F_G_sur_compl})--(\ref{F_G_sur_intersec}) one has:
\begin{enumerate}[$\bullet$]
\item On $E_F\cap E_G$, there are $(C_1, C_2)\in \R^2$ such that 
\be F(r)=C_1+\dfrac{\nu-1}{8\nu(1-\rho)}r^2,\qquad G(r)= C_2+ \dfrac{\rho-\nu}{8\nu(1-\rho)}r^2. \label{OnE_FandE_G} \ee 
\item On $E_F\cap E_G ^c$, there is $C_3 \in \R$ such that
\be F(r)=C_3-\dfrac{1}{8\nu}r^2,\qquad G(r)=0.\label{OnE_F}\ee
\item On $E_G\cap E_F^c$, there is $C_4\in \R$ such that
\be G(r)=C_4-\dfrac{1}{8}r^2, \qquad F(r)=0. \label{OnE_G}\ee
\end{enumerate}
The above statements are actually not completely correct  since the quantities $C_1, C_2, C_3$ and $C_4$ are constant only on the connected components of $E_F\cap E_G$, $E_F\cap E_G ^c$, and $E_G\cap E_F^c$, respectively. But this ambiguity will be removed thanks to the following lemma. 
\begin{lem}\label{connected_sets}
Let $(F,G)\in \K_{M_f}\times \K_{M_g}$ be a solution of \eqref{system_stat} enjoying the regularity properties \eqref{regstatsol}. Then $0\in E_F \cup E_G$ and $E_F$ and $E_G$ are connected sets and bounded.
\end{lem}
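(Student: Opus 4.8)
The plan is to exploit the explicit polynomial expressions \eqref{OnE_FandE_G}--\eqref{OnE_G} together with the continuity of $F$ and $G$ provided by Lemma~\ref{lem:radial}, working in the radial variable $r = |x|$. First I would establish that $0 \in E_F \cup E_G$. Arguing by contradiction, suppose $F(0) = G(0) = 0$. Since $F,G$ are continuous, there is a neighbourhood of $0$ on which both vanish, so $0$ lies in the open set $E_F^c \cap E_G^c$; let $(r_0, r_1)$ (with $r_0 \ge 0$ possibly zero, i.e.\ an interval starting at the origin) be the connected component of $\{r : F(r) = G(r) = 0\}$ containing a left-neighbourhood of the supremum... more carefully: let $\rho_\star = \sup\{r \ge 0 : F = G = 0 \text{ on } [0,r]\}$; by the mass positivity \eqref{M_f_M_gposi} we have $\rho_\star < \infty$, and by continuity $F(\rho_\star) = G(\rho_\star) = 0$. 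Just beyond $\rho_\star$ at least one of $F,G$ becomes positive; by \eqref{OnE_F} or \eqref{OnE_G} (or \eqref{OnE_FandE_G}), that function equals $C - c r^2$ with $c > 0$ on the component, and matching the value $0$ at $r = \rho_\star$ forces it to be $c(\rho_\star^2 - r^2) \le 0$ for $r > \rho_\star$, contradicting positivity. Hence $0 \in E_F \cup E_G$.

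Next I would prove boundedness of $E_F$ and $E_G$. On each connected component of $E_F \cap E_G^c$, $F(r) = C_3 - r^2/(8\nu)$, which is positive only on a bounded $r$-interval; likewise on $E_G \cap E_F^c$ with $C_4 - r^2/8$. The delicate case is $E_F \cap E_G$, where \eqref{OnE_FandE_G} gives $F(r) = C_1 + \frac{\nu-1}{8\nu(1-\rho)} r^2$ and $G(r) = C_2 + \frac{\rho - \nu}{8\nu(1-\rho)} r^2$: when $\nu > 1$ the coefficient in $F$ is positive, so $F$ does not force a bounded interval by itself, and similarly $G$'s coefficient $\frac{\rho-\nu}{8\nu(1-\rho)}$ has sign depending on whether $\nu \lessgtr \rho$. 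However, $F$ and $G$ cannot both be increasing in $r^2$ on an unbounded set because that would contradict integrability ($F, G \in L^1$). More precisely, note that $\frac{\nu-1}{8\nu(1-\rho)} + \frac{\rho-\nu}{8\nu(1-\rho)} = \frac{\rho - 1}{8\nu(1-\rho)} = -\frac{1}{8\nu} < 0$, so $F + G = C_1 + C_2 - r^2/(8\nu)$ is strictly decreasing and positive only on a bounded interval; since $F, G \ge 0$, this bounds $E_F \cap E_G$. Combining the three cases, $E_F$ and $E_G$ are bounded (using that outside a large ball all three regions are empty, and $E_F^c \cap E_G^c$ contains the complement of a large ball).

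Finally, for connectedness I would use radial symmetry to identify $E_F$ with a subset of $[0,\infty)$ (its "radial trace") and show this trace is an interval containing $0$; equivalently, I would rule out an annular gap. Suppose $F > 0$ on $[0, a)$, $F(a) = 0$, and $F > 0$ again on $(a, b)$ for some $a < b$. On the component $(a,b)$ of $E_F$, $F$ is given by one of \eqref{OnE_FandE_G}, \eqref{OnE_F}: in the $E_F \cap E_G^c$ or $E_F \cap E_G$ regime $F$ has the form $C - c r^2$ with $c > 0$ (the $E_F \cap E_G$ subcase with $\nu > 1$ requires a separate look — there I would instead track $F + G$ or use that $G$ must also vanish somewhere, forcing a transition), so matching $F(a) = 0$ gives $F(r) = c(a^2 - r^2) < 0$ on $(a,b)$, a contradiction; the remaining subcase $\nu > 1$ inside $E_F \cap E_G$ can be excluded by noticing that crossing from a region where $F = 0$ into $E_F \cap E_G$ would require $G > 0$ on both sides of $a$, but then $F$ continuous with $F(a) = 0$ and $\nabla F = \frac{\nu-1}{\nu(1-\rho)}\nabla b$ pointing outward forces $F < 0$ just inside $r < a$. \textbf{I expect the main obstacle} to be a clean bookkeeping of the finitely many transition radii between the four regions and verifying that the constants match continuously across them; in particular the subcase $\nu > 1$ within $E_F \cap E_G$ (where $F$ is convex increasing in $r$) needs the auxiliary observation that $F + G$ is always strictly concave in $r$, which is what ultimately prevents both annular gaps and unbounded support.
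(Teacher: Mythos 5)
Your treatment of $0\in E_F\cup E_G$ and of boundedness is close to workable, and the right tool is the one you almost isolate: by \eqref{F_G_sur_compl}--\eqref{F_G_sur_intersec} the radial derivative of $F+G$ equals $-r/(4\nu)$ or $-r/4$ on each of the three positivity regions and $0$ elsewhere, so $F+G$ is nonincreasing in $r$ and strictly decreasing where it is positive; this at once gives $F(0)+G(0)>0$ and that $\{F+G>0\}$ is a bounded disk centred at the origin. As written, though, there are gaps: continuity of $F,G$ at $0$ does not give a neighbourhood where both vanish; just beyond $\rho_\star$ the region type may change infinitely often, so no single formula $C-cr^2$ need hold on an interval with endpoint $\rho_\star$ (and in $E_F\cap E_G$ with $\nu>1$ the function $F$ is not of that form); and bounding each connected component of the three regions does not bound $E_F$ and $E_G$, since infinitely many annular components could a priori escape to infinity --- the parenthetical ``outside a large ball all three regions are empty'' is precisely what has to be proved. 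The paper instead rules out unbounded components using integrability of $F$ and $G$ on an unbounded component (Step~1 of its proof) and gets boundedness from connectedness.

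The genuine gap is in the connectedness argument, which is the heart of the lemma. First, you assume the first component of $E_F$ contains the origin (``$F>0$ on $[0,a)$'', ``an interval containing $0$''), but $E_F$ need not contain $0$: for $\nu\ge\nu_3^\star$ the set $E_F$ is an annulus, and symmetrically $E_G$ for $\nu\le\nu_1^\star$. Second, and more seriously, the critical subcase --- a gap of $E_F$ bridged by $G>0$, with $F$ re-entering through $E_F\cap E_G$ where $\nu>1$ makes $F$ increasing --- is ``excluded'' by applying $\nabla F=\frac{\nu-1}{\nu(1-\rho)}\nabla b$ just inside $r<a$; but there $F\equiv 0$ and that formula does not hold, and indeed the same reasoning would forbid the genuine fourth configuration in which $F$ vanishes near the origin and then rises inside $E_F\cap E_G$, so the argument proves too much. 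The auxiliary fact you invoke is also false: $F+G$ is not concave in general (in the fourth configuration its radial slope jumps from $-r/4$ to $-r/(4\nu)>-r/4$ at the inner edge of $E_F$), and in any case monotonicity or concavity of $F+G$ cannot detect a gap of $E_F$ that is filled by $G$, since $F+G$ stays positive and decreasing across such a gap. The missing idea, used in the paper, is a case split on $\nu$: for $\nu\le 1$ all admissible formulas \eqref{OnE_FandE_G}--\eqref{OnE_G} give $\partial_r F\le 0$, so $F$ is nonincreasing, $0\in E_F$ and $E_F$ is a disk; for $\nu>1>\rho$ the same holds for $G$. Then a second component of the other positivity set is excluded because at its inner radius the function must increase, which by the formulas forces that neighbourhood to lie inside the positivity set of the monotone function together with a sign condition ($\rho\ge\nu$, resp.\ $\nu\ge 1$); monotonicity then shows the monotone function is positive on the whole gap, so the first component lies in $E_F\cap E_G$, where the other function is nondecreasing and cannot vanish at that component's outer radius. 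Without this (or an equivalent) mechanism, the connectedness claim is not established.
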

\begin{proof} \textbf{Step~1.}
Assume first for contradiction that $E_F\cap E_G$ has an unbounded connected component $\mathcal{O}$. On $\mathcal{O}$, $(F,G)$ are given by \eqref{OnE_FandE_G} and the integrability of $F$ and $G$ complies with \eqref{OnE_FandE_G} only when $\rho>\nu>1$, hence a contradiction.

Assume next for contradiction that $E_F$ has an unbounded connected component. Since we have already proved that the connected components of $E_F\cap E_G$ are bounded, the radial symmetry of $F$ implies that there is $r_0>0$ such that $\{x\in\R^2\ :\ |x|>r_0\}\subset E_F\cap E_G^c$. But this contradicts \eqref{OnE_F}. A similar argument excludes unbounded connected components in $E_G$.

\noindent\textbf{Step~2.} Thanks to formulas (\ref{OnE_FandE_G})--(\ref{OnE_G}), $F$ is nonincreasing (as a function of $r$) when $\nu\in (0,1]$ while $G$ is nonincreasing when $\nu>1>\rho$. We only consider when $F$ is nonincreasing, the other case being handled similarly. Owing to the monotonicity of $F$ and the positivity of $\|F\|_{L^1(\R^2)}=M_f$, it is obvious that $0\in E_F$ and $E_F$ is a disk centred at $x=0$, hence connected and bounded thanks to the previous step. 

Assume for contradiction that $E_G$ is not connected and let $\mathcal{O}_1=\{ x\in\R^2\ :\ r_1<|x|<r_2\}$ and $\mathcal{O}_2=\{ x\in\R^2\ :\ r_3<|x|<r_4\}$ be two connected components of $E_G$ with $0\le r_1<r_2\le r_3<r_4$. On the one hand, $G$ is increasing in a right-neighborhood of $\{ x\in\R^2\ :\ r_3=|x|\}$ which is only possible if this neighborhood is included in $E_F$ and $\rho\ge\nu$ according to formulas 
\eqref{OnE_FandE_G}--\eqref{OnE_G}. Since $r_3\ge r_2$, this fact and the monotonicity of $F$ imply that $F(r)\ge F(r_3)>0$ as soon as $r\le r_3$, hence $\mathcal{O}_1\subset E_F$. Since $\rho\ge\nu$, $G$ is increasing on $\mathcal{O}_1$ by \eqref{OnE_FandE_G} and thus cannot vanish on $\{ x\in\R^2\ :\ r_2=|x|\}$, hence a contradiction. Therefore, $E_G$ is also connected and its boundedness follows also from the previous step.
\end{proof}

As a consequence of Lemmas~\ref{lem:radial} and~\ref{connected_sets} we get that  $F$ and $G$ are compactly supported and globally Lipschitz continuous.

\subsection{Convergence towards a minimizer}
The goal of this section is to make a step towards Theorem~\ref{thm:conv} where the convergence as $t\to\infty$ of a solution $(f(t),g(t))$ to~\eqref{system2} with initial conditions $(f_0,g_0)\in \K_{M_f}\times \K_{M_g}$ towards the unique stationary solution $(F,G)\in \K_{M_f}\times \K_{M_g}$ is proved. The first part of the proof consists in showing by compactness arguments that any cluster point of $(f(t),g(t))$ as $t\to\infty$ is a stationary solution to \eqref{system2} satisfying \eqref{regstatsol}. 

\begin{proof}[Proof of Theorem~\ref{thm:conv}]
Let $n \in \N$. We define $(f_n, g_n): \R_+\times \R^2 \to (\R_+)^2$ by 
$$
f_n(t,x) = f(t+n,x), \qquad g_n(t,x) = g(t+n,x),
$$ 
and set $\Ee_n = \Ee(f_n,g_n)(0) = \Ee(f,g)(n)$. The relation \eqref{eq:NRJ} yields
$$
\Ee_{n+1} + \int_0^1 \int_{\R^2}  \Psi_n \diff x\diff t \leq \Ee_n, \qquad n \ge 0,
$$
where we have set 
$$
\Psi_n =\nu \rho f_n |\grad \phi_{f_n}|^2 + g_n |\grad \phi_{g_n}|^2 \in L^1((0,1)\times \R^2).
$$
Since $\Psi_n \ge 0$ and $\Ee\ge 0$, we deduce from the previous inequality that $\left(\Ee_n\right)_n$ is non-increasing and
\begin{equation}
0 \le \Ee_n + \int_0^1 \sum_{n\in\N} \Psi_n \diff t \le \Ee_0. \label{ZZZ}
\end{equation}
In particular,
\be 
\Psi_n \underset{n\to\infty}\longrightarrow 0 \quad \text{in}\; L^1((0,1)\times \R^2).
\label{conv_Psi}\ee
Thanks to \eqref{controlnormeL2} and \eqref{ZZZ}, we have
\begin{itemize}
\item $\left(f_n\right)_{n}$ and $\left(g_n\right)_{n}$ are bounded in $L^\infty((0,1);L^2(\R^2)),$
\item $\left(f_n\right)_{n}$ and $\left(g_n\right)_{n}$ are bounded in $L^\infty((0,1);L^1_{|x|^2}(\R^2)).$
\end{itemize}
In addition, it follows from \eqref{eq:entropy} and  the bounds~\eqref{eq:entropy_up}--\eqref{eq:entropy_low} 
on the entropy $\mathcal{H}$ that 
\begin{align*}
& \int_0^1 \left( \nu\rho \|\nabla(f_n+g_n)\|_{L^2(\R^2)}^2 + \nu(1-\rho) \|\nabla g_n\|_{L^2(\R^2)}^2 \right) \diff t \\
& \quad = \int_n^{n+1} \left( \nu\rho \|\nabla(f+g)\|_{L^2(\R^2)}^2 + \nu(1-\rho) \|\nabla g\|_{L^2(\R^2)}^2 \right) \diff t \\
& \quad \le \mathcal{H}(f,g)(n) - \mathcal{H}(f,g)(n+1) + \frac{\rho M_f + \nu M_g}{2} \leq C.
\end{align*}
Consequently,
\begin{itemize}
\item $\left(f_n\right)_{n}$ and $\left(g_n\right)_{n}$ are bounded in $L^2((0,1);H^1(\R^2)).$
\end{itemize}
Moreover, 
\begin{itemize}
\item $\left( \partial_t f_n \right)_n$ and $\left( \partial_t g_n \right)_n$ converge to zero in $L^2 \left( (0,1);(W^{1,4}(\R^2))' \right)$ as $n\to\infty$.
\end{itemize} 
Indeed, for $\varphi\in L^2((0,1);W^{1,4}(\R^2))$,
 \begin{align*}
 \left|\int_0^1 \langle \partial_t f_n, \varphi \rangle \diff t   \right|&=  \left|\int_0^1\int_{\R^2}\grad\cdot \left(\nu f_n\grad \phi_{f_n} \right)\varphi \diff x \diff t \right|\\ &\leq \int_0^1\int_{\R^2} \left|\nu  f_n\grad \phi_{f_n}\cdot \grad \varphi \right|\diff x \diff t\\  &\leq \|\nu f_n\grad\phi_{f_n}\|_{L^2((0,1);L^{4/3}(\R^2))} \| \varphi \|_{L^2((0,1);W^{1,4}(\R^2))},
\end{align*}
and 
\begin{align*}
 \|f_n\grad\phi_{f_n}\|^2_{L^2((0,1);L^{4/3}(\R^2))}&\leq  \int_0^1 \left\|\sqrt{f_n}\right\|^2_{L^4(\R^2)} \left\|\sqrt{f_n}\grad \phi_{f_n}\right\|^2_{L^2(\R^2)} \diff t\\&\leq C\int_0^1 \int_{\R^2} \Psi_n \diff x\diff t \mathop{\longrightarrow}_{n\to \infty} 0, 
 \end{align*}
thanks to (\ref{conv_Psi}). The proof for $(\partial_t g_n)_n$ is similar.
Since the embedding $H^1(\R^2)\cap L^1_{|x|^2}(\R^2)$ in $L^1(\R^2)\cap L^2(\R^2)$ is compact, see \cite[Lemma~A.1]{laurencot2}, and the embedding $L^1(\R^2)\cap L^2(\R^2)$ in $(W^{1,4}(\R^2))'$ is continuous, thanks to Lemma~\ref{embedding_L1L2_dual_W14}, we are in a position
to apply \cite[Corollary~4]{Sim87}  to conclude that there are a subsequence
of $(f_n, g_n)_n$ (not relabeled) and functions $(F,G)$ such that
\begin{align*} 
&(f_n, g_n)\longrightarrow (F, G) \quad \text{strongly in}\quad L^1((0,1)\times \R^2)\cap L^2((0,1)\times \R^2),\\
& (\grad f_n, \grad g_n) -\!\!\!\rightharpoonup (\grad F, \grad G) \quad \text{weakly in}\quad L^2((0,1)\times \R^2),\\
& (\partial_t f_n, \partial_t g_n)\longrightarrow (0,0) \quad \text{strongly in}\quad L^2((0,1);W^{1,4}(\R^2)').
\end{align*}
This implies in particular that $(\p_t F, \p_t G) = (0,0)$. Moreover, there exist $\chi_F, \chi_G$ in $L^2((0,1)\times\R^2)^2$
such that 
\begin{align}
&\sqrt{f_n} \grad \phi_{f_n}\longrightharpoonup \chi_F \quad \text{weakly in}\quad L^2((0,1)\times\R^2)^2, \label{eq:chi_F}\\
&\sqrt{g_n} \grad \phi_{g_n}\longrightharpoonup \chi_G \quad \text{weakly in}\quad L^2((0,1)\times\R^2)^2.\label{eq:chi_G}
\end{align}
But since $\sqrt{f_n}$ (resp. $\sqrt{g_n}$) converges strongly in $L^2((0,1)\times\R^2)$ towards $\sqrt F$ (resp. $\sqrt G$), 
and since $\grad \phi_{f_n}$ (resp. $\grad \phi_{g_n}$) converges weakly in $L^2((0,1); L^2_{\rm loc}(\R^2))$ towards 
$\grad \phi_F$ (resp. $\grad \phi_{G}$), we can identify the limits in~\eqref{eq:chi_F}--\eqref{eq:chi_G} as
\[
\chi_F = \sqrt{F}\grad \phi_F, \qquad \chi_G = \sqrt{G}\grad \phi_G.
\]
Owing to (\ref{conv_Psi}), we have moreover that
\begin{align*}
\sqrt{f_n}\grad \phi_{f_n} & \longrightarrow 0 \quad \text{strongly in}\quad L^2((0,1)\times \R^2), \\
\sqrt{g_n}\grad \phi_{g_n} & \longrightarrow 0 \quad \text{strongly in}\quad L^2((0,1)\times \R^2).
\end{align*}
Therefore, since $F$ and $G$ do not depend on time, $\sqrt{F}\grad \phi_{F}=\sqrt{G} \phi_G = 0$ a.e. in $\R^2$, that is, $(F,G)$ solves \eqref{system_stat}. Furthermore, 
\begin{equation*}
\int_{\R^2} F \diff x = \lim_{n\to\infty} \int_0^1 \int_{\R^2} f_n \diff x\diff t = M_f.
\end{equation*} 
A similar argument being available for $G$, we conclude that $(F,G)\in \K_{M_f} \times \K_{M_g}$. We have thus established that $(F,G)$ is a solution to \eqref{system_stat} in $\K_{M_f} \times \K_{M_g}$ which satisfies \eqref{regstatsol}.
 
It remains to check that the whole sequence $\left(f_n, g_n\right)_n$ converges towards $(F,G) \in \K_{M_f} \times \K_{M_g}$. This is a consequence of the uniqueness of the solution to the stationary problem~\eqref{system_stat}, {\it cf.} Remark~\ref{rmk:uniqueness} in Section~\ref{self_Profiles} below.
 \end{proof}
 
\section{Explicit characterization of the self-similar profiles }\label{self_Profiles}

The viscosity ratio $\nu$ appears to play a central role in the characterization of the stationary profiles. Therefore, we will suppose that $M_f>0$, $M_g>0$ and $\rho \in (0,1)$ are fixed and we classify the stationary solutions with respect to the values of $\nu$.  We define the critical values of $\nu$ by
\be \nu_1^\star=\dfrac{\rho^2M_f}{M_g+\rho(M_f-M_g)},\quad \nu_2^\star=\dfrac{\rho M_f+M_g}{M_f+M_g},\quad \nu_3^\star =1+(1-\rho)\dfrac{M_f}{M_g}. \label{val_critiq}\ee
It is easy to check that $$0<\nu_1^\star <\rho<\nu_2^\star <1<\nu_3^\star.$$
It follows from \eqref{OnE_FandE_G}-\eqref{OnE_G} and Lemmas~\ref{lem:radial} and~\ref{connected_sets} that only four configurations are possible for the stationary solutions. Figure~\ref{figure_soluStat} illustrate these four configurations. 
We now show that these four configurations correspond to the four intervals $(0, \nu_1^\star]$, $(\nu_1^\star, \nu_2^\star)$, $(\nu_2^\star, \nu_3^\star)$ and $[\nu_3^\star,\infty)$, some kind of degeneracy taking place when $\nu\in\{\nu_1^\star,\nu_2^\star,\nu_3^\star\}$. In the particular case $\nu  = \nu_2^\star$, one has $E_F = E_G$.
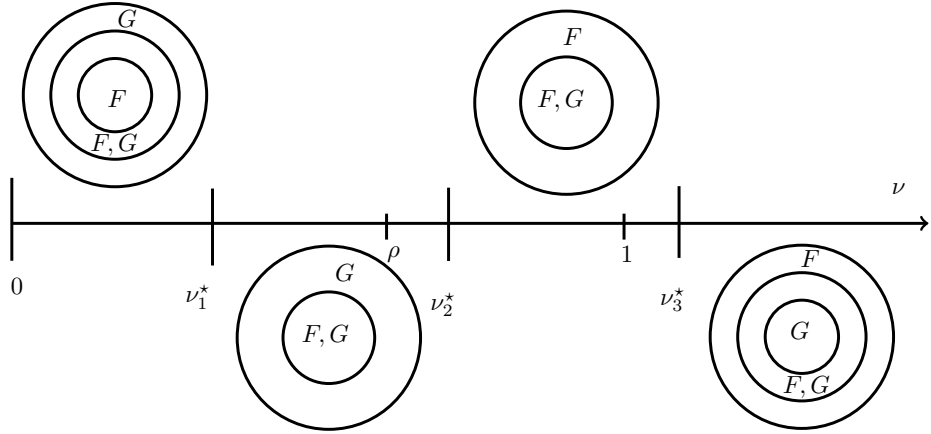
\begin{figure}[htbp]
\begin{center} 
\resizebox{\textwidth}{!}{
\begin{tikzpicture}[scale=1.3][line cap=round,line join=round,>=triangle 45,x=1.0cm,y=1.0cm]
\clip(-0.1,-2.3) rectangle (10.1,2.45);
\draw [line width=1.2pt] (2.1915382332828153,0.38)-- (2.1915382332828153,-0.46);

\draw [line width=1.2pt] (4.766114743820066,0.3896312668968353)-- (4.766114743820066,-0.41036873310316474);
\draw (0.95,1.5544698805971019) node[anchor=north west] {$F$};
\draw (0.7651046470255683,1.0668275387302113) node[anchor=north west] {$F,G$};
\draw (1.0610462538609788,2.4279808378436665) node[anchor=north west] {$G$};
\draw (3.0717616042400105,-1.019902409939972) node[anchor=north west] {$F,G$};
\draw (5.638199587087243,1.5465355729072647) node[anchor=north west] {$F,G$};
\draw (5.918090099903637,2.226061128444977) node[anchor=north west] {$F$};
\draw (8.396484287877426,-0.983924307376693) node[anchor=north west] {$G$};
\draw (8.3166302707889,-1.576177432625994) node[anchor=north west] {$F,G$};
\draw (8.508513484459721,-0.19842064927570188) node[anchor=north west] {$F$};
\draw (3.4356010230373895,-0.348311162092097) node[anchor=north west] {$G$};
\draw (9.503907655377105,0.5271560002810258) node[anchor=north west] {$\nu$};
\draw (6.98,-0.6121505808894765) node[anchor=north west] {$\nu_3^\star$};
\draw (4.4749076042078896,-0.6601213843071818) node[anchor=north west] {$\nu_2^\star$};
\draw (4.0,-0.1501213843071818) node[anchor=north west] {$\rho$};
\draw [line width=1.2pt] (4.089860913299552,0.1058946826221381)-- (4.089860913299552,-0.1771857952123998);
\draw (1.8,-0.581286834527555) node[anchor=north west] {$\nu_1^\star$};
\draw (-0.10224572901837384,-0.50635495696553897) node[anchor=north west] {$0$};

\draw [line width=1.2pt] (0.0029784765304113965,0.4967681822044667)-- (0.0029784765304113965,-0.4083766692161724);
\draw [->,line width=1.2pt] (0.0029784765304113965,0.) -- (10.,0.);

\draw [line width=1.2pt] (1.1280257214749543,1.3995659603922253) circle (1.cm);
\draw [line width=1.2pt] (1.1280257214749543,1.3995659603922253) circle (0.7cm);
\draw [line width=1.2pt] (1.1280257214749543,1.3995659603922253) circle (0.4cm);
\draw [line width=1.2pt] (3.4604883649494895,-1.248265099995376) circle (1.cm);
\draw [line width=1.2pt] (3.4604883649494895,-1.248265099995376) circle (0.5cm);
\draw [line width=1.2pt] (6.05166884629204,1.3169195221654622) circle (0.5cm);
\draw [line width=1.2pt] (6.05166884629204,1.3169195221654622) circle (1.cm);
\draw [line width=1.2pt] (8.619311463260026,-1.2382272921338742) circle (0.4cm);
\draw [line width=1.2pt] (8.619311463260026,-1.2382272921338742) circle (0.7cm);
\draw [line width=1.2pt] (8.619311463260026,-1.2382272921338742) circle (1.cm);

\draw [line width=1.2pt] (7.279860913299552,0.4058946826221381)-- (7.279860913299552,-0.3771857952123998);

\draw [line width=1.2pt] (6.679860913299552,0.1058946826221381)-- (6.679860913299552,-0.1771857952123998);
\draw (6.55,-0.15635495696553897) node[anchor=north west] {$1$};
\end{tikzpicture}
}
\caption{Shape of the stationary profiles according to the value of the parameter $\nu$}
\label{figure_soluStat}

	\end{center}
\end{figure}
\paragraph{\bf First case :}  the first configuration we consider is shown on Figure~\ref{first_config} for some $0 < r_1 \leq r_2$. We note that  $0\in E_F\cap E_G$.  
%
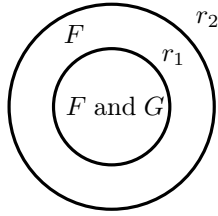
\begin{figure}[!htbp] 
	\begin{center} 

\begin{tikzpicture}[scale=0.9][line cap=round,line join=round,>=triangle 45,x=1.0cm,y=1.0cm]
\clip(-2.7,-2.1) rectangle (0.6,1.2);
\draw [line width=1.2pt] (-1.04,-0.44) circle (0.8590692637965812cm);
\draw [line width=1.2pt] (-1.04,-0.44) circle (1.5153877391611692cm);
\draw [line width=1.2pt] (3.02,-0.42) circle (0.8955445270895244cm);
\draw [line width=1.2pt] (3.02,-0.42) circle (1.4756693396557374cm);
\draw [line width=1.2pt] (8.02,-0.46) circle (0.8357032966310483cm);
\draw [line width=1.2pt] (8.02,-0.46) circle (1.360147050873545cm);
\draw [line width=1.2pt] (8.02,-0.46) circle (1.9054658223122252cm);
\draw [line width=1.2pt] (13.02,-0.56) circle (0.7727871634544661cm);
\draw [line width=1.2pt] (13.02,-0.56) circle (1.366162508635046cm);
\draw [line width=1.2pt] (13.02,-0.56) circle (1.9499743588057767cm);
\draw (-1.854718256949648,-0.161677481046377214) node[anchor=north west] {$F\; \text{and} \;G$};
\draw (-1.8866580151628973,0.9082016255720227) node[anchor=north west] {$F$};
\draw (0.06676046718120474,1.0994453930742423) node[anchor=north west] {$r_2$};
\draw (-0.43866948978894754,0.5393743596748847) node[anchor=north west] {$r_1$};
\end{tikzpicture}
\caption{First configuration}
\label{first_config}
	\end{center}
\end{figure}
According to \eqref{OnE_FandE_G} and \eqref{OnE_G}, we look for $F$ and $G$ under the form
\[
F(r) = \begin{cases}
C_1 + \dfrac{\nu-1}{8\nu(1-\rho)} r^2 & \text{ for }\; 0 \leq r \leq r_1, \\
C_3 - \dfrac{r^2}{8\nu} & \text{ for }\; r_1 \leq r \leq r_2, \\
0 & \text{ for }\; r \geq r_2, 
\end{cases}
\]
and
\[
G(r) = \begin{cases}
C_2 + \dfrac{\rho-\nu}{8 \nu(1-\rho)} r^2 & \text{ for }\; 0 \leq r \leq r_1, \\
0 & \text{ for } r \geq r_1.
\end{cases}
\]
In order to determine $C_1, C_2, C_3, r_1, r_2$ and especially the parameter range for which this configuration appears, we use the relations $F(0)=C_1>0$, $F(r_2)=0$, $F$ is continuous in $r=r_1$, $G(r_1)=0$ and 
$$\int_{\R^2}F\diff x= M_f, \qquad \int_{\R^2}G\diff x= M_g.$$
On the one hand, the condition $F(r_2)=0$ provides
\be 
C_3=\dfrac{1}{8\nu}r_2^2,\label{exp_C3_case1}
\ee
whereas the continuity of $F$ at $r=r_1$ yields
\be C_1+\dfrac{\nu-1}{8\nu(1-\rho)}r_1^2=C_3-\dfrac{1}{8\nu}r_1^2. \label{F_cont_r1}\ee 
To exploit the constraint on the mass for $F$, we
pass in polar coordinates and integrate with respect to $r$. This leads to
\begin{multline}\label{exp_M_f_Case1}
2\int_0^{r_1} \Big[C_1+\dfrac{\nu-1}{8\nu(1-\rho)}r^2\Big]r\diff r+
2\int_{r_1}^{r_2} \Big[C_3-\dfrac{1}{8\nu}r^2\Big]r\diff r \\
 =  C_1 r_1^2+\dfrac{\nu-1}{16\nu(1-\rho)}r_1^4+
{C_3}[r_2^2-r_1^2]-\dfrac{1}{16\nu}[r_2^4-r_1^4]=\dfrac{M_f}{\pi}.
\end{multline}
On the other hand, $G$ is decreasing with $G(r_1)=0$, hence $\rho<\nu$  and 
\be C_2=\dfrac{\nu-\rho}{8\nu(1-\rho)}r_1^2.\label{exp_C2_case1}\ee
In addition, the constraint on the mass of $G$ gives
\be 
2\int_0^{r_1} \Big[C_2+ \dfrac{\rho-\nu}{8\nu(1-\rho)}r^2\Big]r\diff r
=C_2 r_1^2+\dfrac{\rho-\nu}{16\nu(1-\rho)}r_1^4=\dfrac{M_g}{\pi}.\label{exp_M_g_Case1}
\ee 
Using \eqref{exp_C2_case1} and \eqref{exp_M_g_Case1} one gets an explicit formula for $r_1$:
\be 
r_1^4=\dfrac{16\nu (1-\rho)M_g}{\pi(\nu-\rho)}>0.
\label{exp_r1_case1}
\ee
Multiplying (\ref{F_cont_r1}) by $r_1^2$, we get 
$$C_1 r_1^2- C_3 r_1^2=\dfrac{\rho-\nu}{8\nu(1-\rho)}r_1^4,$$ 
hence, using (\ref{exp_C3_case1}), (\ref{exp_M_f_Case1}) and \eqref{exp_r1_case1}, we obtain
\be r_2^4= 
\dfrac{16\nu}{\pi}(M_f+M_g).\label{exp_r2Case1}\ee
Then
\be r_2^4 \ge r_1^4 \Longleftrightarrow \nu \ge\dfrac{\rho M_f+M_g}{M_f+M_g} = \nu_2^\star.  \label{mu_sup_valCrit_case1} \ee 
Combining (\ref{exp_C3_case1}) and (\ref{F_cont_r1}) 
we get
$$ C_1= \frac1{8\nu} \left( r_2^2 - \frac{\nu-\rho}{1-\rho} r_1^2\right),$$
and by  (\ref{exp_r2Case1}) one has 
\be F(0) = C_1> 0\Longleftrightarrow \nu < 1+(1-\rho) \dfrac{M_f}{M_g} = \nu_3^\star. \label{mu_inf_valCrit_case1}\ee 
We conclude that we have the case depicted on Figure~\ref{first_config} if and only if 
\[
\nu_2^\star \le \nu < \nu_3^\star,\quad \text{with}\quad \nu_2^\star=
\dfrac{\rho M_f+M_g}{M_f+M_g}\quad \text{and}\quad \nu_3^\star=1+(1-\rho) \dfrac{M_f}{M_g}.
\]
 Remark that if $\nu=\nu_2^\star$,  then $r_1=r_2$. 
\medskip

\paragraph{\bf Second case :} the second configuration we consider is shown on Figure~\ref{second_config} 
for some $0 < r_1 \leq r_2$. We note that  $0\in E_F\cap E_G$.
%
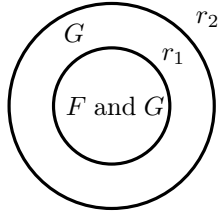
\begin{figure}[htbp] 
	\begin{center} 
\begin{tikzpicture}[scale=0.9][line cap=round,line join=round,>=triangle 45,x=1.0cm,y=1.0cm]
\clip(-2.7,-2.1) rectangle (0.6,1.2);
\draw [line width=1.2pt] (-1.04,-0.44) circle (0.8590692637965812cm);
\draw [line width=1.2pt] (-1.04,-0.44) circle (1.5153877391611692cm);
\draw [line width=1.2pt] (3.02,-0.42) circle (0.8955445270895244cm);
\draw [line width=1.2pt] (3.02,-0.42) circle (1.4756693396557374cm);
\draw [line width=1.2pt] (8.02,-0.46) circle (0.8357032966310483cm);
\draw [line width=1.2pt] (8.02,-0.46) circle (1.360147050873545cm);
\draw [line width=1.2pt] (8.02,-0.46) circle (1.9054658223122252cm);
\draw [line width=1.2pt] (13.02,-0.56) circle (0.7727871634544661cm);
\draw [line width=1.2pt] (13.02,-0.56) circle (1.366162508635046cm);
\draw [line width=1.2pt] (13.02,-0.56) circle (1.9499743588057767cm);
\draw (-1.854718256949648,-0.161677481046377214) node[anchor=north west] {$F\; \text{and} \;G$};
\draw (-1.8866580151628973,0.9082016255720227) node[anchor=north west] {$G$};
\draw (0.06676046718120474,1.0994453930742423) node[anchor=north west] {$r_2$};
\draw (-0.43866948978894754,0.5393743596748847) node[anchor=north west] {$r_1$};

\end{tikzpicture}
\caption{Second configuration}
\label{second_config}
	\end{center}
\end{figure}
According to \eqref{OnE_FandE_G} and \eqref{OnE_F},  we look for $F$ and $G$ under the form
\[
F(r) = 
\begin{cases}
C_1 + \dfrac{\nu-1}{8\nu(1-\rho)} r^2 & \text{ for }\; 0 \leq r \leq r_1, \\
0 & \text{ for }\; r \geq r_1,
\end{cases}
\] 
and 
\[
G(r) = 
\begin{cases}
C_2 + \dfrac{\rho-\nu}{8\nu(1-\rho)} r^2 & \text{ for }\; 0 \leq r \leq r_1, \\
C_4 - \dfrac1{8} r^2 & \text{ for }\; r_1 \leq r \leq r_2, \\
0 & \text{ for }\; r \geq r_2.
\end{cases}
\] 
This case is very similar to the first case, the roles of $F$ and $G$ being exchanged. Since $F$ is decreasing, there holds $\nu < 1$. Reproducing the calculations of the previous case provides that
$$ 
r_1^4=\dfrac{16\nu (1-\rho)M_f}{\pi(1-\nu)}>0,
\qquad 
r_2^4 = \dfrac{16}{\pi} (\rho M_f + M_g), 
$$ 
\[
C_1 = \dfrac{1-\nu}{8\nu(1-\rho)} r_1^2, 
\qquad
C_4 = \frac{1}{8} r_2^2,
\qquad
C_2 = C_4 - \rho C_1.
\]
The conditions $G(0)=C_2 > 0$ and $r_1 \le r_2$ show 
that the  case of Figure~\ref{second_config} occurs if and only if
$$ 
\nu_1^\star < \nu \le \nu_2^\star,\quad \text{with}\quad {\nu_1^\star=\dfrac{\rho^2 M_f}{\rho M_f+(1-\rho) M_g}} \quad \text{and}\quad \nu_2^\star=\dfrac{\rho M_f+M_g}{M_f+M_g}.
$$

\medskip

\paragraph{\bf Third case:} we consider the  configuration shown on  Figure~\ref{third_config}. We note that $0\in E_F\cap E_G^c$.
%
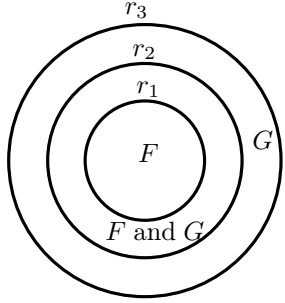
\begin{figure}[htb] 
	\begin{center} 

\begin{tikzpicture}[scale=0.95][line cap=round,line join=round,>=triangle 45,x=1.0cm,y=1.0cm]
\clip(6.,-2.5) rectangle (10.1,1.8);
\draw [line width=1.2pt] (-1.04,-0.44) circle (0.8590692637965812cm);
\draw [line width=1.2pt] (-1.04,-0.44) circle (1.5153877391611692cm);
\draw [line width=1.2pt] (3.02,-0.42) circle (0.8955445270895244cm);
\draw [line width=1.2pt] (3.02,-0.42) circle (1.4756693396557374cm);
\draw [line width=1.2pt] (8.02,-0.46) circle (0.8357032966310483cm);
\draw [line width=1.2pt] (8.02,-0.46) circle (1.360147050873545cm);
\draw [line width=1.2pt] (8.02,-0.46) circle (1.9054658223122252cm);
\draw [line width=1.2pt] (13.02,-0.56) circle (0.7727871634544661cm);
\draw [line width=1.2pt] (13.02,-0.56) circle (1.366162508635046cm);
\draw [line width=1.2pt] (13.02,-0.56) circle (1.9499743588057767cm);
\draw (7.794094665664915,-0.10265828836828104) node[anchor=north west] {$F$};
\draw (7.338927668875078,-1.180838740523188) node[anchor=north west] {$F\; \text{and} \;G$};
\draw (9.39672426746807,0.08473328324567944) node[anchor=north west] {$G$};
\draw (7.771152243699202,0.770538214602822) node[anchor=north west] {$r_1$};
\draw (7.716511167269997,1.3021248548596398) node[anchor=north west] {$r_2$};
\draw (7.607229014411585,1.877554811829792) node[anchor=north west] {$r_3$};
\end{tikzpicture}
		\caption{Third configuration}
		\label{third_config}
	\end{center}
\end{figure}
According to \eqref{OnE_FandE_G}--\eqref{OnE_G}, $F$ and $G$ are given by
\[
F(r) = \begin{cases}
C_3 - \dfrac{r^2}{8\nu} & \text{ for }\; 0 \leq r \leq r_1, \\
C_1 + \dfrac{\nu-1}{8\nu(1-\rho)} r^2 & \text{ for }\; r_1 \leq r \leq r_2, \\
0 & \text{ for }\; r \geq r_2, 
\end{cases}
\]
and
\[
G(r) = \begin{cases}
0 & \text{ for }\; 0 \leq r \leq r_1, \\
C_2 + \dfrac{\rho-\nu}{8 \nu(1-\rho)} r^2 & \text{ for }\; r_1 \leq r \leq r_2, \\
C_4 - \dfrac{1}{8} r^2 & \text{ for } r_2 \leq r \leq r_3.
\end{cases}
\]
Observe first that $G$ shall increase on $(r_1,r_2)$ which implies that $\nu\le\rho<1$. Owing to the continuity of $F$ and $G$ and their vanishing properties, we require that 
\begin{align}
& C_1 = \dfrac{1-\nu}{8\nu(1-\rho)} r_2^2, \qquad C_2 = - \dfrac{\rho-\nu}{8\nu(1-\rho)} r_1^2, \qquad C_4 = \dfrac{r_3^2}{8}, \label{Z31} \\
& C_2 - C_4 = - \dfrac{\rho(1-\nu)}{8\nu(1-\rho)} r_2^2, \qquad C_1-C_3 = \dfrac{\rho-\nu}{8\nu(1-\rho)} r_1^2. \label{Z32}
\end{align}
Combining \eqref{Z31} and \eqref{Z32} leads us to 
\begin{equation}
r_3^2 = \frac{\rho(1-\nu)}{\nu(1-\rho)}r_2^2 - \frac{\rho - \nu}{\nu(1-\rho)}r_1^2.
\label{Z33}
\end{equation}
In addition, the mass constraints give
\begin{align}
16\nu(1-\rho) \dfrac{M_f}{\pi} & = - (\rho-\nu) r_1^4 + (1-\nu) r_2^4, \label{Z34} \\
16\nu(1-\rho) \dfrac{M_g}{\pi} & = (\rho-\nu) r_1^4 - \rho(1-\nu) r_2^4 + \nu(1-\rho) r_3^4. \label{Z35}
\end{align}
We use \eqref{Z33} to substitute $r_3^4$ in \eqref{Z35}, providing the relation
\[
16\nu^2(1-\rho)^2 \dfrac{M_g}{\pi} = \rho(\rho-\nu)(1-\nu)\left(r_2^2 - r_1^2\right)^2.
\]
Since $r_2 \geq r_1$, one gets $\nu<\rho$ and
\be\label{eq:Z36}
r_2^2 = r_1^2 + 4 \nu(1-\rho)\sqrt{\frac{M_g}{\pi(\rho-\nu)\rho(1-\nu)}}.
\ee
Combining the relations~\eqref{eq:Z36} and~\eqref{Z34}, we obtain that $r_1^2$ is the root of the 
polynomial of degree two, i.e., 
\be\label{eq:Z37}
r_1^4 - S r_1^2 + P=0,
\ee
with
\[
S= - 8 \nu \sqrt{\dfrac{M_g(1-\nu)}{\pi(\rho - \nu) \rho}}, \qquad 
P= \frac{16 \nu}\pi \left(\frac{\nu(1-\rho)M_g}{\rho(\rho-\nu)}-M_f\right).
\]
Since $S<0$, the polynomial $X^2 - SX + P$ admits one nonnegative root if and only if $P \leq 0$, i.e., 
if and only if 
\[
\nu \leq \nu_1^\star = \frac{\rho^2 M_f}{\rho M_f + (1-\rho) M_g}.
\]
Therefore, \eqref{eq:Z37} has no real solution if $\nu > \nu_1^\star$, whereas it admits one unique nonnegative solution if $\nu\leq \nu_1^\star$, given by 
\[
r_1 = 2 \sqrt{\frac{\nu}{\sqrt\pi} \left(\sqrt{\frac{M_g}{\rho} + \frac{M_f}{\nu}}-\sqrt{\frac{(1-\nu)M_g}{\rho(\rho-\nu)}}\right)}.
\]

\medskip

\paragraph{\bf Fourth case :} we consider the last configuration shown on Figure~\ref{fourth_config} 
for some $0 \leq r_1 < r_2 < r_3$. We note that $0\in E_G\cap E_F^c$. 
%
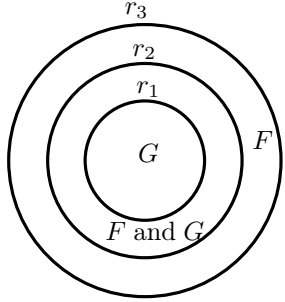
\begin{figure}[htb] 
	\begin{center} 
	\begin{tikzpicture}[scale=0.95][line cap=round,line join=round,>=triangle 45,x=1.0cm,y=1.0cm]
\clip(6.,-2.5) rectangle (10.1,1.8);
\draw [line width=1.2pt] (-1.04,-0.44) circle (0.8590692637965812cm);
\draw [line width=1.2pt] (-1.04,-0.44) circle (1.5153877391611692cm);
\draw [line width=1.2pt] (3.02,-0.42) circle (0.8955445270895244cm);
\draw [line width=1.2pt] (3.02,-0.42) circle (1.4756693396557374cm);
\draw [line width=1.2pt] (8.02,-0.46) circle (0.8357032966310483cm);
\draw [line width=1.2pt] (8.02,-0.46) circle (1.360147050873545cm);
\draw [line width=1.2pt] (8.02,-0.46) circle (1.9054658223122252cm);
\draw [line width=1.2pt] (13.02,-0.56) circle (0.7727871634544661cm);
\draw [line width=1.2pt] (13.02,-0.56) circle (1.366162508635046cm);
\draw [line width=1.2pt] (13.02,-0.56) circle (1.9499743588057767cm);

\draw (7.794094665664915,-0.10265828836828104) node[anchor=north west] {$G$};
\draw (7.338927668875078,-1.180838740523188) node[anchor=north west] {$F\; \text{and} \;G$};
\draw (9.39672426746807,0.08473328324567944) node[anchor=north west] {$F$};
\draw (7.771152243699202,0.770538214602822) node[anchor=north west] {$r_1$};
\draw (7.716511167269997,1.3021248548596398) node[anchor=north west] {$r_2$};
\draw (7.607229014411585,1.877554811829792) node[anchor=north west] {$r_3$};

\end{tikzpicture}
\caption{Fourth configuration }
\label{fourth_config}
	\end{center}
\end{figure}

According to \eqref{OnE_FandE_G}--\eqref{OnE_G}, $F$ and $G$ are given by
\[
F(r) = \begin{cases}
0 & \text{ for }\; 0 \leq r \leq r_1, \\
C_1 + \dfrac{\nu-1}{8\nu(1-\rho)} r^2 & \text{ for }\; r_1 \leq r \leq r_2, \\
C_3 - \dfrac{r^2}{8\nu} & \text{ for }\; r_2 \leq r \leq r_3, 
\end{cases}
\]
and
\[
G(r) = \begin{cases}
C_4 - \dfrac{1}{8} r 2 & \text{ for } 0 \leq r \leq r_1, \\
C_2 + \dfrac{\rho-\nu}{8 \nu(1-\rho)} r^2 & \text{ for }\; r_1 \leq r \leq r_2, \\
0 & \text{ for }\; r \geq r_2.
\end{cases}
\]
First of all, we note that $F$ shall increase on $(r_1,r_2)$ which implies that $\nu\ge 1$. Owing to the continuity of $F$ and $G$ and their vanishing properties, we require that 
\begin{align}
& C_1 = - \dfrac{\nu-1}{8\nu(1-\rho)} r_1^2, \qquad C_2 = \dfrac{\nu-\rho}{8\nu(1-\rho)} r_2^2, \qquad C_3 = \dfrac{r_3^2}{8\nu}, \label{Z41} \\
& C_3-C_1 = \dfrac{\nu-\rho}{8\nu(1-\rho)} r_2^2, \qquad 
C_2 - C_4 = - \dfrac{\rho(\nu-1)}{8\nu(1-\rho)} r_1^2. \label{Z42}
\end{align}
It follows from \eqref{Z41} and \eqref{Z42} that
$$
\dfrac{\nu-\rho}{8\nu(1-\rho)} r_2^2 = C_3 - C_1 = \dfrac{r_3^2}{8\nu} + \dfrac{\nu-1}{8\nu(1-\rho)} r_1^2,
$$
hence
\begin{equation}
(\nu-1) r_1^2 - (\nu-\rho) r_2^2 + (1-\rho) r_3^2 = 0. \label{Z43}
\end{equation}
Moreover, the mass constraints give
\begin{equation}
(\nu-1) r_1^4 - (\nu-\rho) r_2^4 + (1-\rho) r_3^4 = 16\nu(1-\rho) \dfrac{M_f}{\pi} \label{Z44}
\end{equation}
and
\begin{equation}
- \rho(\nu-1) r_1^4 + (\nu-\rho) r_2^4 = 16\nu(1-\rho) \dfrac{M_g}{\pi}. \label{Z45}
\end{equation}
It readily follows from \eqref{Z44} and \eqref{Z45} that
\begin{align}
r_2^4 & = \dfrac{16\nu(1-\rho)}{\nu-\rho} \dfrac{M_g}{\pi} + \dfrac{\rho(\nu-1)}{\nu-\rho} r_1^4, \label{Z46} \\
r_3^4 & = 16\nu \dfrac{M_f+M_g}{\pi} - (\nu-1) r_1^4. \label{Z47}
\end{align}
It results from (\ref{Z43}) that
\[ r_3^4=\left(\dfrac{1-\nu}{1-\rho}\right)^2r_1^4+\left(\dfrac{\nu-\rho}{1-\rho}\right)^2r_2^4-2\dfrac{(\nu-1)(\nu-\rho)}{(1-\rho)^2}r_1^2r_2^2,\]
hence
\be \dfrac{(\nu-1)(\nu-\rho)}{(1-\rho)^2}(r_2^2 -r_1^2)^2=r_3^4+\dfrac{\nu-1}{1-\rho}r_1^4-\dfrac{\nu-\rho}{1-\rho}r_2^4.\label{Z60}\ee
Multiplying (\ref{Z60}) by $(1-\rho)$ and using (\ref{Z44}), one has
\be (r_2^2-r_1^2)^2=\dfrac{16\nu (1-\rho)^2M_f}{\pi (\nu-1)(\nu-\rho)}.\label{Z61}\ee 
Since $r_2 \geq r_1$, one gets 
\be\label{Z62}
r_2^2 = r_1^2 + 4 (1-\rho)\sqrt{\frac{\nu M_f}{\pi(\nu-\rho)(\nu-1)}}.
\ee
Combining the relations~\eqref{Z62} and~\eqref{Z45}, we obtain that $r_1^2$ is the root of the 
polynomial of degree two, i.e., 
\be\label{Z63}
r_1^4 - S r_1^2 + P=0,
\ee
with
\[
S= - 8  \sqrt{\dfrac{M_f(\nu-\rho)}{\pi(\nu -1) \nu}}, \qquad 
P= \frac{16}{\pi} \left(\frac{(1-\rho)M_f}{\nu-1}-M_g\right).
\]
Since $S<0$, the polynomial $X^2 - SX + P$ admits one nonnegative root if and only if $P \leq 0$, i.e., 
if and only if 
\[
\nu \geq \nu_3^\star =1+ (1-\rho)\frac{M_f}{M_g}.
\]
Therefore, \eqref{Z63} has no real solution if $\nu < \nu_3^\star$, whereas it admits one unique nonnegative solution if $\nu\geq \nu_3^\star$, given by 
\[
r_1 = 2 \sqrt{\frac{1}{\sqrt\pi} \left(\sqrt{M_g + M_f\frac{\rho}{\nu}}-\sqrt{\frac{(\nu-\rho)M_f}{\nu(\nu-1)}}\right)}.
\]

\begin{rmk}\label{rmk:uniqueness}
As a consequence of the case study carried out above, there exists a unique solution $(F,G)\in \K_{M_f} \times \K_{M_g}$ to the problem~\eqref{system_stat}.  Owing to Proposition~\ref{prop:minimizer}, the unique minimizer of the energy $\Ee$ in $\K_{M_f} \times \K_{M_g}$ satisfies~\eqref{system_stat}. Consequently, being a minimizer of the minimization problem~\eqref{pb_min} is equivalent to solving~\eqref{system_stat}. 
This concludes the proofs of Theorem~\ref{thm:exisUniq_profil} and Theorem~\ref{thm:conv}. 
\end{rmk}

\section{Numerical investigation}\label{numer_invest}

In this section, we present numerical simulations of the system~\eqref{system2}. 
We are interested in its long-time behavior and thus compute the numerical solution until stabilization.  
Since our study is widely based on the use of energy and dissipation estimates, we make use of the 
upstream mobility finite volume scheme studied in~\cite{moifvca2, intrusion} and described in the next section.

\subsection{The numerical scheme}\label{def_scheme}
We detail here the discretization of the problem (\ref{system2}) we use for the numerical simulations. The time discretization relies on backward Euler scheme, while the space discretization relies on a finite volume approach (see, e.g., \cite{Eymard3}), with a two-point flux approximation and an upstream choice for the mobility.

Recall that the minimizers have compact support by Theorem~\ref{thm:exisUniq_profil}, which allows us to perform the simulations on an open bounded polygonal domain $\O\subset \R^2$ which is chosen to be larger than the support of both the initial data and the final states. It is actually always possible to take here $\O=(0,1)^2$ at the expense of reducing the masses $M_f$ and $M_g$ (only the ratio $M_f/M_g$ 
has an influence on the shape of the minimizers). Practically, no-flux boundary conditions across $\p\O$ are prescribed in the numerical method. 

An admissible mesh of $\O$ is given by a family $\T$ of control volumes (open and convex polygons), a family $\E$ of edges and a family of points $(x_K)_{K\in \T}$ which satisfy Definition 9.1 in \cite{Eymard3}. This definition implies that the straight line between two neighboring centers of cells $(x_K,x_L)$ is orthogonal to the edge $\sigma =K|L$ separating the cell $K$ and the cell $L$.

We denote the set of interior edges by $\E_{\text{int}}$. For a control volume $K\in\T$, we denote the set of its edges by $\E_K$ and the set of its interior edges by $\E_{K,\text{int}}$. For $\sigma \in \E_{\text{int},K}$ with $\sigma=K|L$, we define $d_{\sigma}=d(x_K,x_L)$ and the transmissibility coefficient 
\[
\tau_\sigma=\dfrac{m(\sigma)}{d_\sigma}, 
\]
where $d$ denotes the distance in $\R^2$ and $m$ the Lebesgue measure in $\R^2$ or $\R$. 

In the simulation, we use variable time steps $\Delta t_n = t^{n+1}-t^n$ with $t^0 = 0$.

The quantity $f_K^n$, $g_K^n$ and $b_K$ approximate the value of $f$, $g$ and $b$, respectively, in the circumcenter of $K$ at time $t^n$. It is given as a data for $n=0$ and, for $n\geq 1$, as a solution to the nonlinear system 
\begin{multline}
m(K)\dfrac{f_K^{n+1}-f_K^n}{\Delta t} \\ +\sum_{\sigma\in\E_{\text{int},K}}\tau_\sigma f_\sigma^{n+1}\nu\Bigg((f_K^{n+1}-f_L^{n+1})+(g_K^{n+1}-g_L^{n+1})+\dfrac{1}{\nu}(b_K-b_L)\Bigg)=0, 
\label{f_Kscheme}
\end{multline}
and
\begin{multline} 
m(K)\dfrac{g_K^{n+1}-g_K^n}{\Delta t} \\ +\sum_{\sigma\in\E_{\text{int},K}}\tau_\sigma g_\sigma^{n+1}\Bigg(\rho(f_K^{n+1}-f_L^{n+1})+(g_K^{n+1}-g_L^{n+1})+(b_K-b_L)\Bigg)=0,
\label{g_Kscheme}
\end{multline}
for $K\in\T$,  with an upstream choice for the mobilities 
\begin{equation}
f_{\sigma}^{n+1}=
\begin{cases}
(f_K^{n+1})^+ &\text{if}\;\; (f_K^{n+1}-f_L^{n+1})+(g_K^{n+1}-g_L^{n+1})+\dfrac{1}{\nu}(b_K-b_L)\geq 0 , \\
(f_L^{n+1})^+ &\text{if}\;\; (f_K^{n+1}-f_L^{n+1})+(g_K^{n+1}-g_L^{n+1})+\dfrac{1}{\nu}(b_K-b_L)< 0.
\end{cases}
\label{f_sigma}
\end{equation} 
and
\begin{equation}
g_{\sigma}^{n+1}=
\begin{cases}
(g_K^{n+1})^+ &\;\text{if}\;\;\rho (f_K^{n+1}-f_L^{n+1})+(g_K^{n+1}-g_L^{n+1})+(b_K-b_L)\geq 0 , \\
(g_L^{n+1})^+ &\;\text{if}\;\;\rho(f_K^{n+1}-f_L^{n+1})+(g_K^{n+1}-g_L^{n+1})+(b_K-b_L)< 0,
\end{cases}
\label{g_sigma}
\end{equation}
where $x^+=\max(0,x)$ and $\sigma = K|L$.
The discretization of the steady state problem \eqref{system_stat} is given
by the following set of nonlinear equations:
\begin{align}
\sum_{\sigma\in\E_{\text{int},K}}\tau_\sigma F_\sigma\nu\Bigg((F_K-F_L)+(G_K-G_L)+\dfrac{1}{\nu}(b_K-b_L)\Bigg)=0, 
\label{F_Kscheme}
\end{align}
and
\begin{align} 
\sum_{\sigma\in\E_{\text{int},K}}\tau_\sigma G_\sigma\Bigg(\rho(F_K-F_L)+(G_K-G_L)+(b_K-b_L)\Bigg)=0,
\label{G_Kscheme}
\end{align}
where $F_\sigma$ and $G_\sigma$ are defined  in a similar way as $f_\sigma^{n+1}$ and $g_\sigma^{n+1}$ above. 
The system~\eqref{F_Kscheme}--\eqref{G_Kscheme} is underdetermined and one has to add the constraints
\be\label{eq:mass-FG}
F_K \geq 0, \quad G_K \geq 0, \quad \sum_{K\in\T}m(K) F_K = M_f, \quad \sum_{K\in\T}m(K) G_K = M_g.
\ee

The relative energy $\Ee(f,g|F,G)$ of a solution $(f,g)$ to \eqref{system2} with respect to the stationary state $(F,G)$ is defined by
$$
\Ee(f,g|F, G)=\int_{\R^2} \Big( E(f,g)-E(F, G)\Big)\diff x=\Ee(f,g)-\Ee(F,G).
$$
It is a classical tool to study the large time behavior of problems with a gradient flow structure since 
the relative energy is decaying along time:
\[
\Ee(f(t),g(t)|F, G) + \int_0^t \mathcal{I}(f,g)(\tau)\diff \tau \leq \Ee(f_0,g_0|F, G), \qquad t \geq 0,
\]
by \eqref{eq:NRJ}. Since $(F,G)$ is a minimizer of $\Ee$ in $\K_{M_f} \times \K_{M_g}$ by Proposition~\ref{prop:minimizer} and Remark~\ref{rmk:uniqueness}, then
$$ \Ee(f,g|F, G)=\Ee(f,g)-\Ee(F,G) \geq 0.$$
Owing to \eqref{eq:NRJ} and Theorem~\ref{thm:conv}, the relative energy shall decay to zero as time goes to infinity. We investigate in Section~\ref{ssec:simu} at what speed this convergence occurs. Note that in our case, the relative energy reduces to 
$$
\Ee(f,g|F, G)= \int_{\R^2}\left( \frac{\rho}2 ( f-F + g-G)^2 + \frac{1-\rho}2 (g-G)^2 \right) \diff x
$$
which, according to~\eqref{controlnormeL2}, is equivalent to the square of the $L^2(\R^2)^2$ distance 
between $(f,g)$ and $(F,G)$.

For the computations, we introduce a discrete version of the relative energy functional :
\begin{align*}\Ee^n = &
\sum_{K\in \T} m(K)\Bigg[\dfrac{\rho}{2}\big(f_K^n+g_K^n)^2+\dfrac{1-\rho}{2}(g_K^n)^2+b_K\Big(\dfrac{\rho}{\nu}f_K^n+g_K^n\Big)\Bigg] \\ 
& - 
\sum_{K\in \T} m(K)\Bigg[\dfrac{\rho}{2}\big(F_K+G_K)^2+\dfrac{1-\rho}{2}(G_K)^2+b_K\Big(\dfrac{\rho}{\nu}F_K+G_K\Big)\Bigg], 
\qquad n \geq 0.
\end{align*}
Since the scheme is energy diminishing \cite{intrusion}, this quantity decreases when $n$ increases. 
Moreover, one can transpose to the discrete setting the proof of Proposition~\ref{prop:minimizer} and establish that 
the unique minimizer $\left({(F_K)}_K,{(G_K)}_K\right)$ of the energy is a solution to the 
scheme~\eqref{F_Kscheme}--\eqref{eq:mass-FG}.
If  $\left({(F_K)}_K,{(G_K)}_K\right)$ used in the definition of $\Ee^n$ is this minimizer (we have not proven that 
the steady discrete problem \eqref{F_Kscheme}--\eqref{eq:mass-FG} admits a unique solution), 
then $\Ee^n$ remains positive and converges to zero as $n\to\infty$. This property is observed in the numerical simulations of the next section.  

\subsection{Numerical simulations}\label{ssec:simu}
Our scheme leads to a nonlinear system that we solve thanks to the Newton-Raphson method. In  our test case, the domain is the unit square, i.e., $\O=(0,1)^2$.  We consider an admissible triangular mesh made of $14336$ triangles. 
We use a mesh coming from the 2D benchmark on anisotropic diffusion problems \cite{Herbin}.
For the evolutive solutions, 
we use an adaptive time step procedure in the practical implementation  in order to increase the robustness of 
the algorithm and to ensure the convergence of the Newton-Raphson iterative procedure. More precisely, 
we associate a maximal time step $\Delta t_{\max}=2.10^{-4}$ for the mesh. If the Newton-Raphson method fails to 
converge after 30 iterations ---we choose that the $\ell^\infty$ norm of the residual has to be smaller than $10^{-9}$ as stopping criterion---, the time step is divided by two. If the Newton-Raphson method 
converges, the first time step is multiplied by two and projected on $[0,\Delta t_{\max}]$.
The first time step $\Delta t$ is equal to $\Delta t_{\max}$ in  the test case presented below.

We perform the numerical experiments with the following data
$$ b(x,y)=\dfrac{1}{8}\Big((x-1/2)^2+(y-1/2)^2 \Big),\quad \rho=0.9,$$
and as an initial condition we take
\begin{align*}
f_0(x,y)=&\frac13\Big(\dfrac{1}{16}-(x-2/7)^2-(y-2/7)^2\Big)^+,\\
g_0(x,y)=&\frac13\Big(\dfrac{1}{16}-(x-5/7)^2-(y-5/7)^2\Big)^+.
\end{align*}
In this case we have $M_f=M_g$ then
$$\nu_1^\star=0.81,\quad \nu_2^\star=0.95\quad \text{and}\;\;\nu_3^\star=1.1 .$$
Note that $f_0$ and $g_0$ are not radially symmetric with respect to $(1/2, 1/2)$.

We represent in Figure~\ref{fig:cas_mu=0.50} to \ref{fig:cas_mu=2} the self-similar profiles. Following the values of $\nu$  these figures  confirm the discussion above on the shape of the steady states.

\begin{figure}[!htbp]
\begin{center}
\begin{subfigure}{0.45\textwidth}
\includegraphics[width=0.9\linewidth]{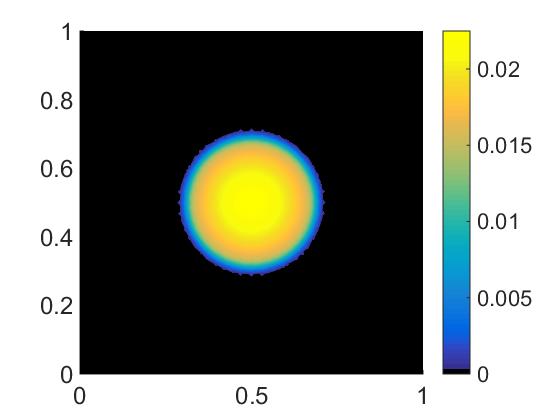} 
\caption{Profile of $F$}

\end{subfigure}
\begin{subfigure}{0.45\textwidth}
\includegraphics[width=0.9\linewidth]{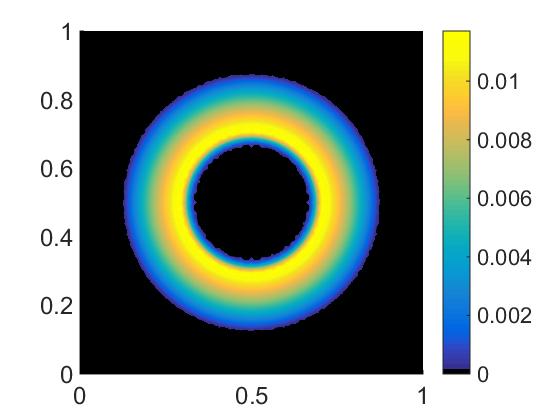}
\caption{Profile of $G$}

\end{subfigure}
 \begin{subfigure}{0.45\textwidth}
\includegraphics[width=0.9\linewidth]{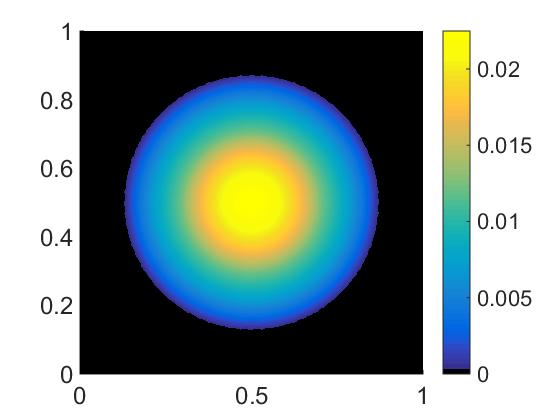} 
\caption{Profile of $F+G$}

\end{subfigure}
\begin{subfigure}{0.45\textwidth}
\centering
\begin{tikzpicture}[scale=0.8][line cap=round,line join=round,>=triangle 45,x=1.0cm,y=1.0cm]
\clip(6.,-2.5) rectangle (10.1,1.8);
\draw [line width=1.2pt] (-1.04,-0.44) circle (0.8590692637965812cm);
\draw [line width=1.2pt] (-1.04,-0.44) circle (1.5153877391611692cm);
\draw [line width=1.2pt] (3.02,-0.42) circle (0.8955445270895244cm);
\draw [line width=1.2pt] (3.02,-0.42) circle (1.4756693396557374cm);
\draw [line width=1.2pt] (8.02,-0.46) circle (0.8357032966310483cm);
\draw [line width=1.2pt] (8.02,-0.46) circle (1.360147050873545cm);
\draw [line width=1.2pt] (8.02,-0.46) circle (1.9054658223122252cm);
\draw [line width=1.2pt] (13.02,-0.56) circle (0.7727871634544661cm);
\draw [line width=1.2pt] (13.02,-0.56) circle (1.366162508635046cm);
\draw [line width=1.2pt] (13.02,-0.56) circle (1.9499743588057767cm);
\draw (7.794094665664915,-0.10265828836828104) node[anchor=north west] {$F$};
\draw (7.538927668875078,-1.280838740523188) node[anchor=north west] {$F,G$};
\draw (7.79672426746807,1.45473328324567944) node[anchor=north west] {$G$};
\end{tikzpicture}
\caption{The first configuration}
\end{subfigure}
\caption{Self-similar profiles for $\nu=0.50$}
\label{fig:cas_mu=0.50}
\end{center}
\end{figure}
\begin{figure}[!htbp]
\centering 
\begin{subfigure}{0.55\textwidth}
\begin{tikzpicture}[scale=1.5][line cap=round,line join=round,>=triangle 45,x=1.0cm,y=1.0cm]
\clip(-0.2,-0.9) rectangle (5.,2.5);
\draw [line width=1.2pt] (2.1915382332828153,0.38)-- (2.1915382332828153,-0.46);
\draw [line width=1.2pt] (4.5920622420021475,0.3896312668968353)-- (4.5920622420021475,-0.41036873310316474);
\draw (0.9650587165042536,1.6793763101280743) node[anchor=north west] {$F$};
\draw (0.8780324655952942,1.0282100543713307) node[anchor=north west] {$F,G$};
\draw (0.9650587165042536,2.4109604854803544) node[anchor=north west] {$G$};
\draw (3.2374108235715244,1.6600371432594168) node[anchor=north west] {$F, G$};
\draw (5.712824182759699,1.650367559825088) node[anchor=north west] {$F,G$};
\draw (8.30427254315982,1.0992013040683442) node[anchor=north west] {$F,G$};
\draw (8.39129879406878,2.081951735177368) node[anchor=north west] {$F$};
\draw (3.247080407005853,2.323934234571395) node[anchor=north west] {$G$};
\draw (4.765205006195476,0.40299129679666806) node[anchor=north west] {$\nu$};
\draw (7.182600864777679,-0.6316541306765173) node[anchor=north west] {1.1};
\draw (4.504126253468598,-0.650993297545175) node[anchor=north west] {0.95};
\draw (2.0287128942804227,-0.6703324644138327) node[anchor=north west] {0.81};
\draw (-0.09859546127191571,-0.65255787734366575) node[anchor=north west] {$0$};
\draw [line width=1.2pt] (0.0029784765304113965,0.4967681822044667)-- (0.0029784765304113965,-0.4083766692161724);
\draw [->,line width=1.2pt] (0.0029784765304113965,0.) -- (10.,0.);
\draw [line width=1.2pt] (1.1280257214749543,1.3995659603922253) circle (1.cm);
\draw [line width=1.2pt] (1.1280257214749543,1.3995659603922253) circle (0.7cm);
\draw [line width=1.2pt] (1.1280257214749543,1.3995659603922253) circle (0.4cm);
\draw [line width=1.2pt] (3.484473766658342,1.4021217888328459) circle (1.cm);
\draw [line width=1.2pt] (3.484473766658342,1.4021217888328459) circle (0.5cm);
\draw [line width=1.2pt] (6.240414280138422,1.3598669769890335) circle (0.5cm);
\draw [line width=1.2pt] (6.240414280138422,1.3598669769890335) circle (1.cm);
\draw [line width=1.2pt] (8.559347958987894,1.3641887932766428) circle (0.4cm);
\draw [line width=1.2pt] (8.559347958987894,1.3641887932766428) circle (0.7cm);
\draw [line width=1.2pt] (8.559347958987894,1.3641887932766428) circle (1.cm);
\draw [line width=1.2pt] (7.279860913299552,0.4058946826221381)-- (7.279860913299552,-0.3771857952123998);
\draw [->,line width=1.2pt] (0.0029784765304113965,-5.228373137344811E-4) -- (5.006644834967234,0.);
\end{tikzpicture}

\end{subfigure}

\vspace{0.9cm}

\begin{tabular}{ccc}

\includegraphics[width=0.3\linewidth]{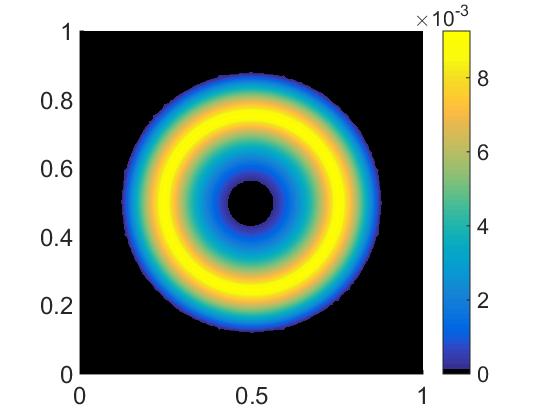} &

\includegraphics[width=0.3\linewidth]{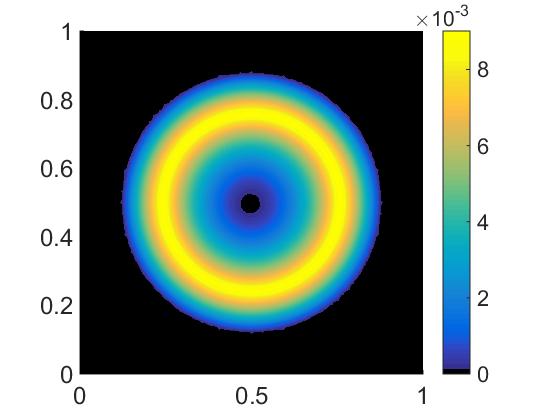}&

\includegraphics[width=0.3\linewidth]{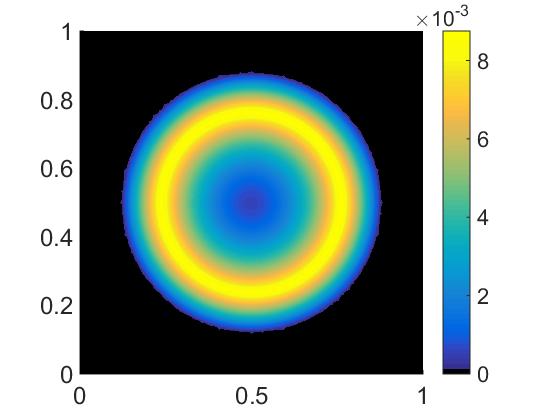} \\
	{Profile of $G$, $\nu=0.80$}&{Profile of $G$,  $\nu=\nu_1^\star=0.81$}&{Profile of $G$, $\nu=0.82$}\\

\end{tabular}
\caption{Topological change of  $E_G$ near the critical value $\nu_1^\star=0.81$}

\end{figure}

\begin{figure}[!htbp]
\centering 
\begin{subfigure}{0.55\textwidth}

\begin{tikzpicture}[scale=1.5][line cap=round,line join=round,>=triangle 45,x=1.0cm,y=1.0cm]
\clip(4.75,-0.9) rectangle (10.1,2.5);
\draw [line width=1.2pt] (2.1915382332828153,0.38)-- (2.1915382332828153,-0.46);
\draw [line width=1.2pt] (4.766114743820066,0.3896312668968353)-- (4.766114743820066,-0.41036873310316474);
\draw (0.9651046470255671,2.5419297438246513) node[anchor=north west] {$G$};
\draw (5.714214185378386,1.584552823059546) node[anchor=north west] {$F,G$};
\draw (5.858126595631501,2.322002735280388) node[anchor=north west] {$F$};
\draw (8.400579176769877,1.6304844788882493) node[anchor=north west] {$G$};
\draw (8.304637569934467,1.01267910430023427) node[anchor=north west] {$F,G$};
\draw (8.388586475915451,2.35179443421157988) node[anchor=north west] {$F$};
\draw (3.243717809366566,2.4459881369892407) node[anchor=north west] {$G$};
\draw (9.503907655377098,0.5271560002810254) node[anchor=north west] {$\nu$};
\draw (7.177323689618395,-0.6001578800350511) node[anchor=north west] {1.10};
\draw (4.69483461275215,-0.6481286834527564) node[anchor=north west] {0.95};
\draw (2.0324550230695086,-0.6481286834527564) node[anchor=north west] {0.81};
\draw (-0.10224572901837446,-0.27635495696553974) node[anchor=north west] {$0$};
\draw [line width=1.2pt] (0.0029784765304113965,0.4967681822044667)-- (0.0029784765304113965,-0.4083766692161724);
\draw [->,line width=1.2pt] (0.0029784765304113965,0.) -- (10.,0.);
\draw [line width=1.2pt] (1.1280257214749543,1.3995659603922253) circle (1.cm);
\draw [line width=1.2pt] (1.1280257214749543,1.3995659603922253) circle (0.7cm);
\draw [line width=1.2pt] (1.1280257214749543,1.3995659603922253) circle (0.4cm);
\draw [line width=1.2pt] (3.484473766658342,1.4021217888328459) circle (1.cm);
\draw [line width=1.2pt] (3.484473766658342,1.4021217888328459) circle (0.5cm);
\draw [line width=1.2pt] (6.027683444583188,1.38887572729202) circle (0.5cm);
\draw [line width=1.2pt] (6.027683444583188,1.38887572729202) circle (1.cm);
\draw [line width=1.2pt] (8.559347958987894,1.3641887932766428) circle (0.4cm);
\draw [line width=1.2pt] (8.559347958987894,1.3641887932766428) circle (0.7cm);
\draw [line width=1.2pt] (8.559347958987894,1.3641887932766428) circle (1.cm);
\draw [line width=1.2pt] (7.279860913299552,0.4058946826221381)-- (7.279860913299552,-0.3771857952123998);
\end{tikzpicture}
\end{subfigure}

\vspace{0.9cm}

\begin{tabular}{ccc}
\includegraphics[width=0.3\textwidth]{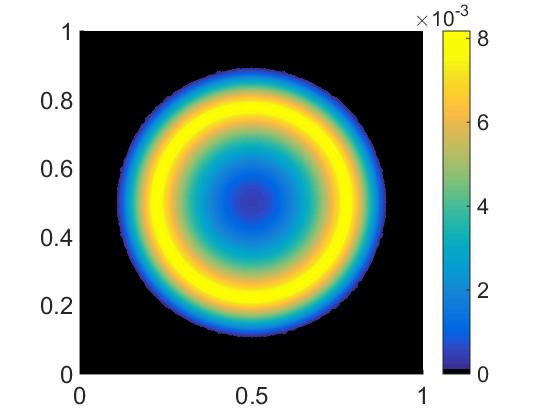} &
\includegraphics[width=0.3\textwidth]{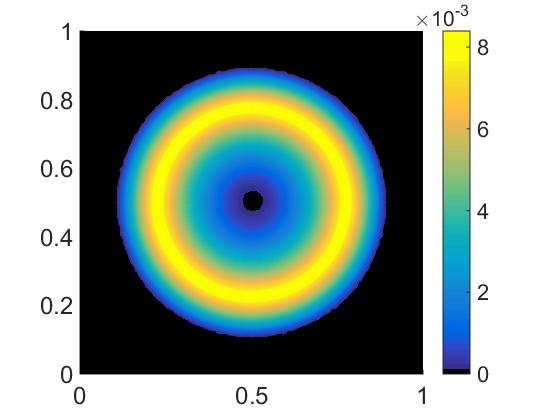}& 
\includegraphics[width=0.3\textwidth]{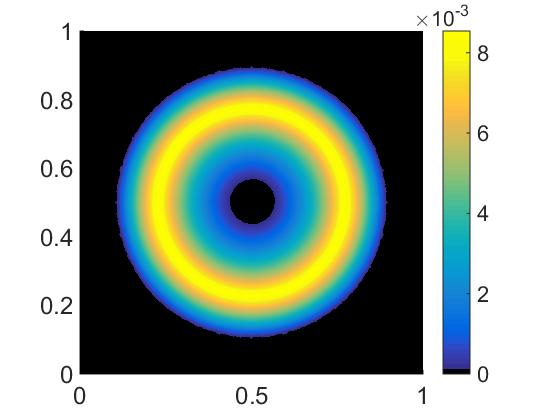} \\
	{Profil of $F$, $\nu=1.09$}&{Profil of $F$,  $\nu=\nu_3^\star=1.10$}&{Profil of $F$, $\nu=1.11$}\\

\end{tabular}
\caption{Topological change of  $E_F$ near the critical value $\nu_3^\star=1.10$}

\end{figure}

\begin{figure}[!htbp]
\begin{center}
\begin{subfigure}{0.45\textwidth}
\includegraphics[width=0.9\linewidth]{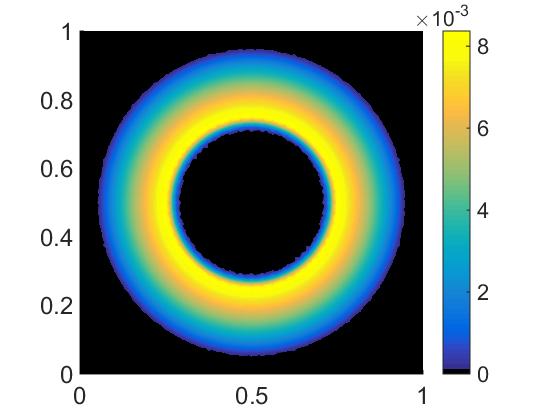} 
\caption{Profile of $F$}

\end{subfigure}
\begin{subfigure}{0.45\textwidth}
\includegraphics[width=0.9\linewidth]{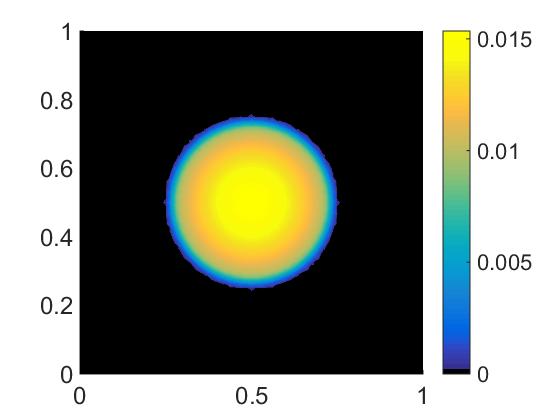}
\caption{Profile of $G$}

\end{subfigure}
 \begin{subfigure}{0.45\textwidth}
\includegraphics[width=0.9\linewidth]{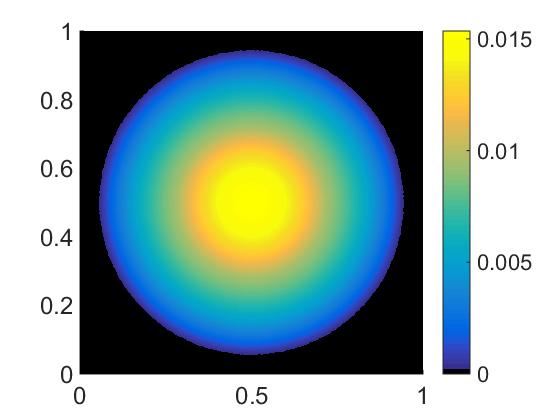} 
\caption{Profile of $F+G$}

\end{subfigure}
\begin{subfigure}{0.45\textwidth}
\centering
\begin{tikzpicture}[scale=0.8][line cap=round,line join=round,>=triangle 45,x=1.0cm,y=1.0cm]
\clip(6.,-2.5) rectangle (10.1,1.8);
\draw [line width=1.2pt] (-1.04,-0.44) circle (0.8590692637965812cm);
\draw [line width=1.2pt] (-1.04,-0.44) circle (1.5153877391611692cm);
\draw [line width=1.2pt] (3.02,-0.42) circle (0.8955445270895244cm);
\draw [line width=1.2pt] (3.02,-0.42) circle (1.4756693396557374cm);
\draw [line width=1.2pt] (8.02,-0.46) circle (0.8357032966310483cm);
\draw [line width=1.2pt] (8.02,-0.46) circle (1.360147050873545cm);
\draw [line width=1.2pt] (8.02,-0.46) circle (1.9054658223122252cm);
\draw [line width=1.2pt] (13.02,-0.56) circle (0.7727871634544661cm);
\draw [line width=1.2pt] (13.02,-0.56) circle (1.366162508635046cm);
\draw [line width=1.2pt] (13.02,-0.56) circle (1.9499743588057767cm);
\draw (7.794094665664915,-0.10265828836828104) node[anchor=north west] {$G$};
\draw (7.538927668875078,-1.280838740523188) node[anchor=north west] {$F,G$};
\draw (7.79672426746807,1.45473328324567944) node[anchor=north west] {$F$};
\end{tikzpicture}
\caption{The fourth configuration}
\end{subfigure}
\caption{Self-similar profiles for $\nu=2$}
\label{fig:cas_mu=2}
\end{center}
\end{figure}

\begin{figure}[!htbp]
\centering
\begin{subfigure}{0.3\textwidth}
\includegraphics[width=0.95\linewidth]{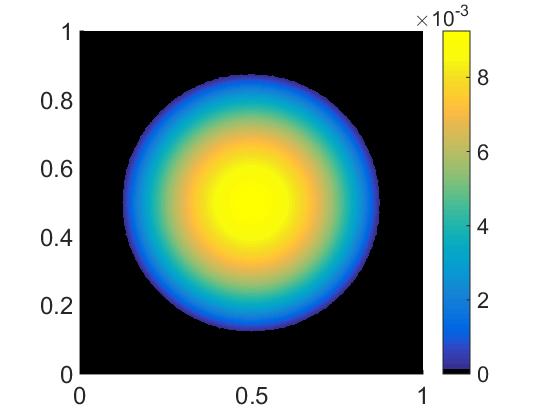} 
\caption{Profile of $F$}

\end{subfigure}
\begin{subfigure}{0.3\textwidth}
\includegraphics[width=0.95\linewidth]{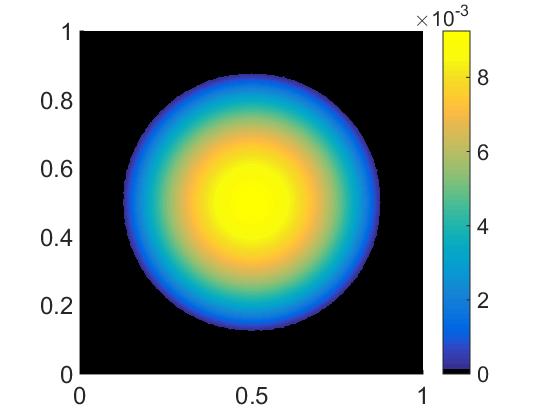}
\caption{Profile of $G$}

\end{subfigure}
 \begin{subfigure}{0.3\textwidth}
\includegraphics[width=0.95\linewidth]{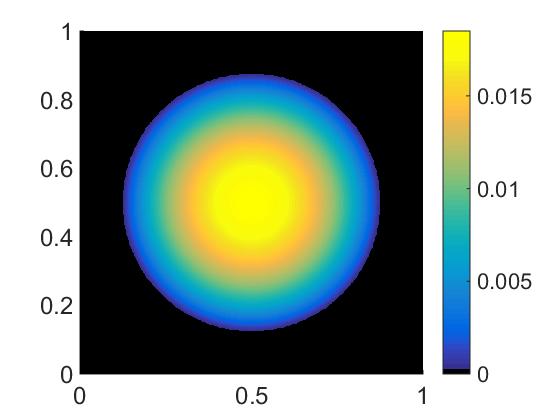} 
\caption{Profile of $F+G$}

\end{subfigure}
\caption{The critical case $\nu = \nu_2^\star$ where $E_F = E_G$.}
\label{fig:cas_mu=0.85}
\end{figure}

\begin{figure}[!htbp]
\centering
\begin{subfigure}{0.45\textwidth}
\includegraphics[width=0.9\linewidth]{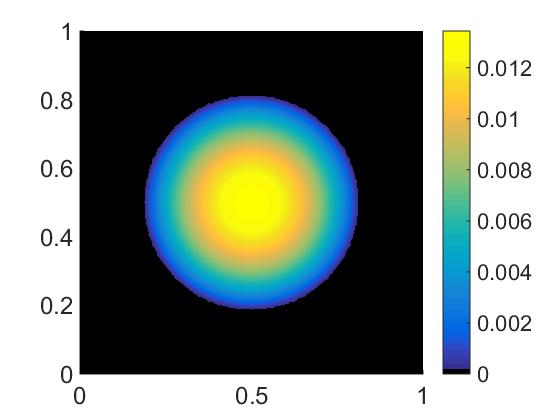} 
\caption{Profile of $F$ for $\nu = \rho$}

\end{subfigure}
\begin{subfigure}{0.45\textwidth}
\includegraphics[width=0.9\linewidth]{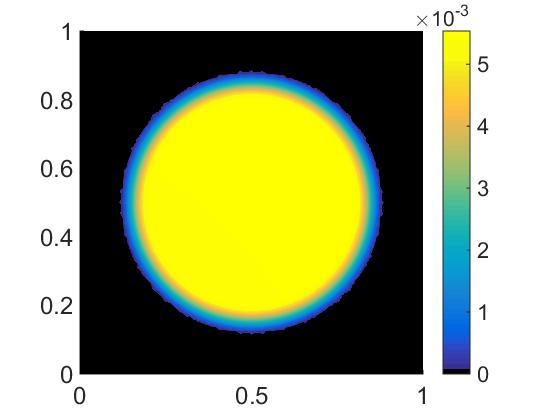}
\caption{Profile of $G$ for $\nu = \rho$}

\end{subfigure}
 \begin{subfigure}{0.45\textwidth}
\includegraphics[width=0.9\linewidth]{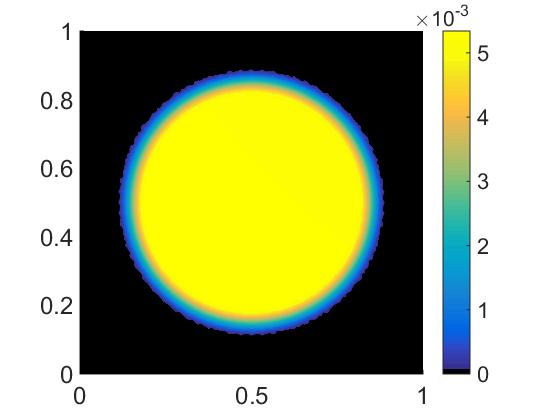} 
\caption{Profile of $F$ for $\nu = 1$}

\end{subfigure}
 \begin{subfigure}{0.45\textwidth}
\includegraphics[width=0.9\linewidth]{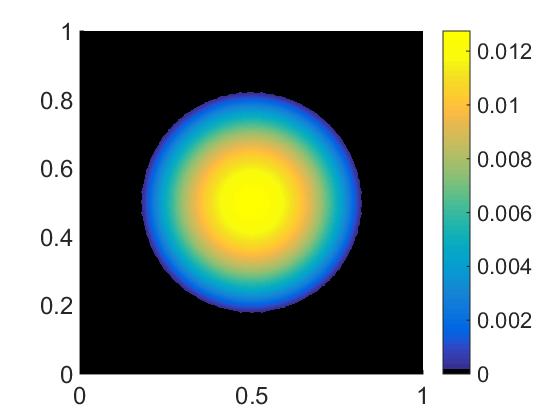} 
\caption{Profile of $G$ for $\nu = 1$}
\end{subfigure}
\caption{The critical values $\nu = \rho$ and $\nu = 1$ where the concavity changes on $E_F \cap E_G$, 
see~\eqref{OnE_FandE_G}.}
\end{figure}


\begin{figure}[!htbp]
\begin{center}
\begin{tabular}{cc}
\includegraphics[width=0.35\linewidth]{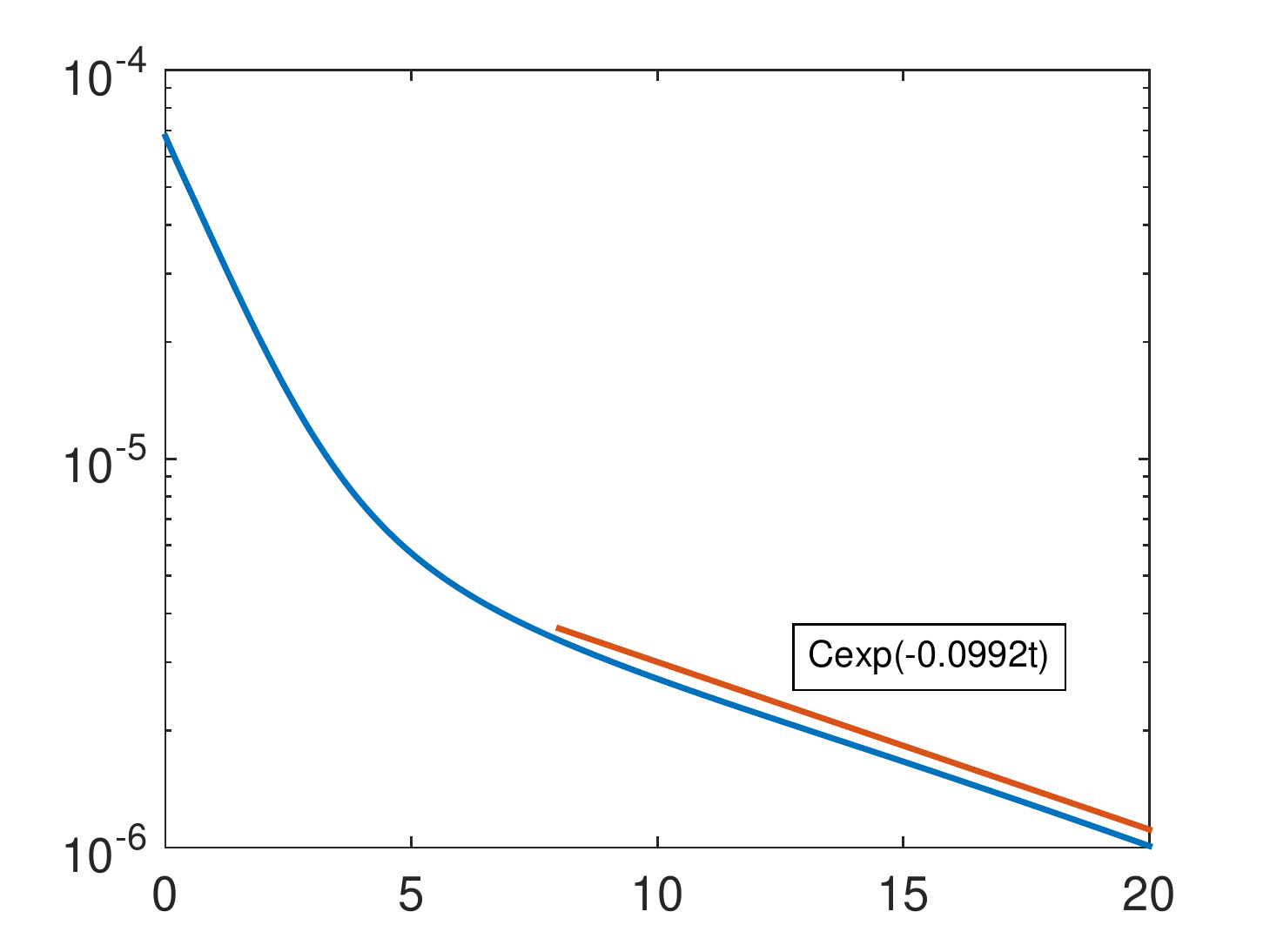} &
\includegraphics[width=0.35\linewidth]{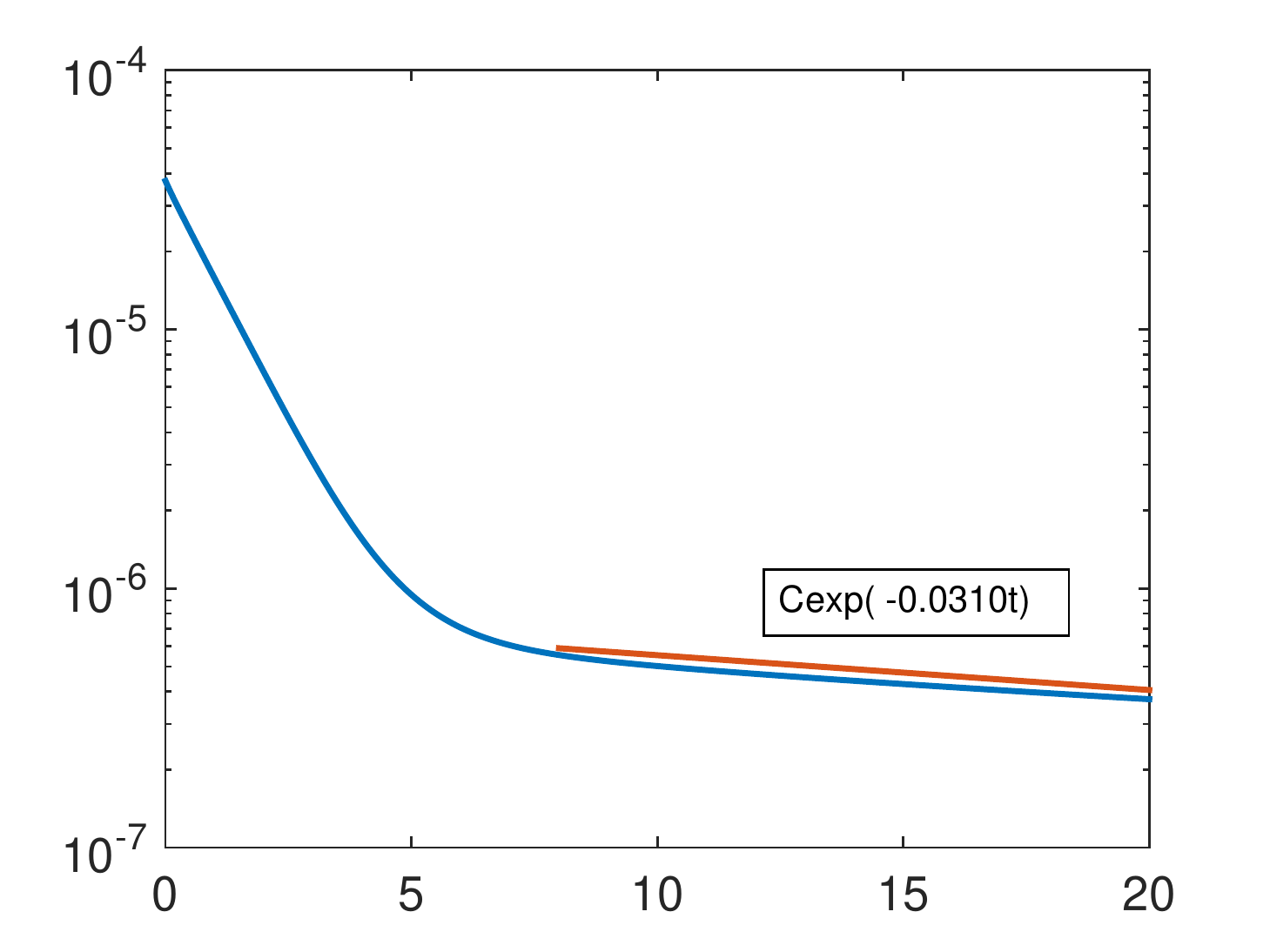}\\
{$\nu=0.40$}&{$\nu=0.90$}\\
\includegraphics[width=0.35\linewidth]{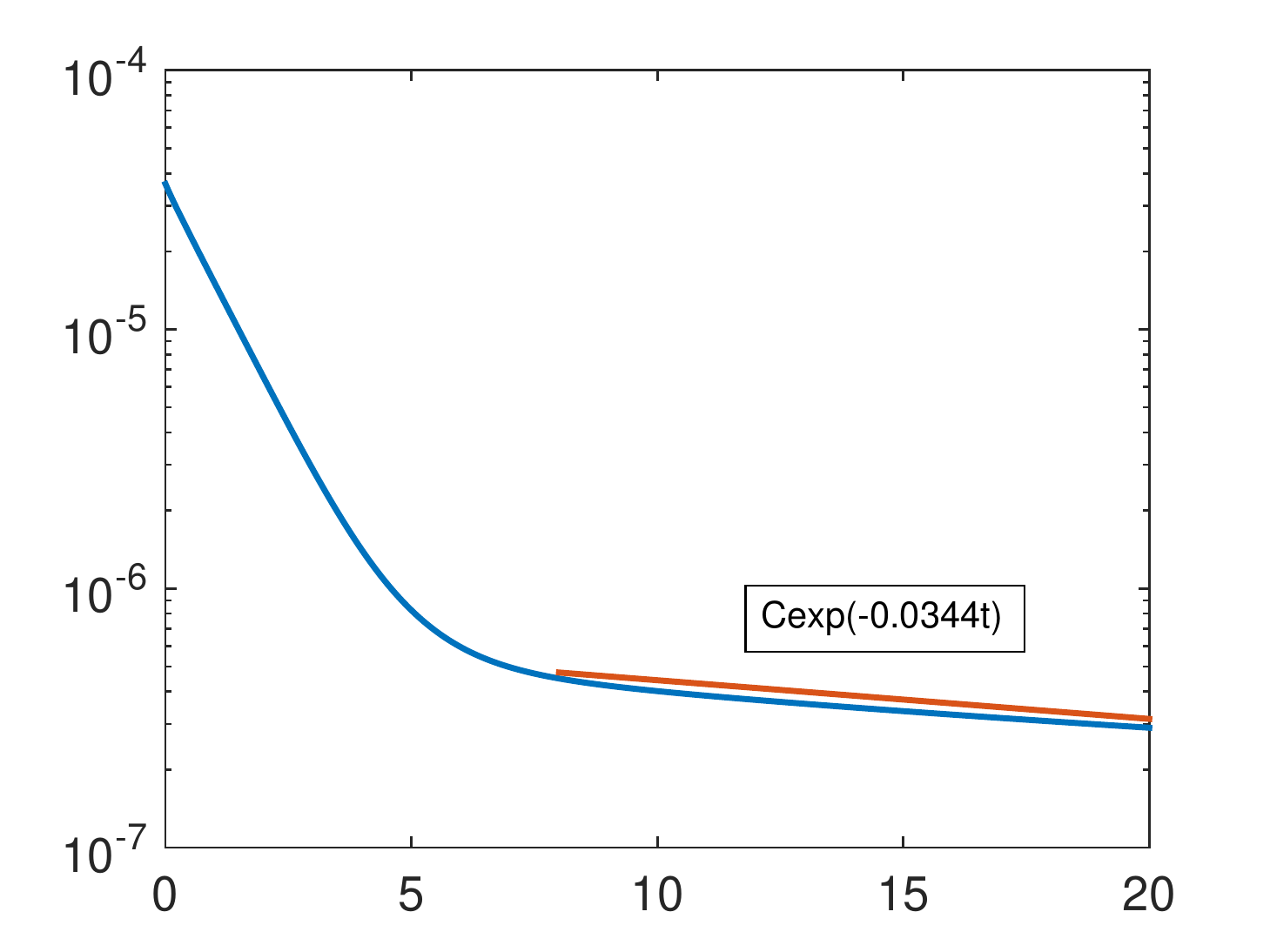} &
\includegraphics[width=0.35\linewidth]{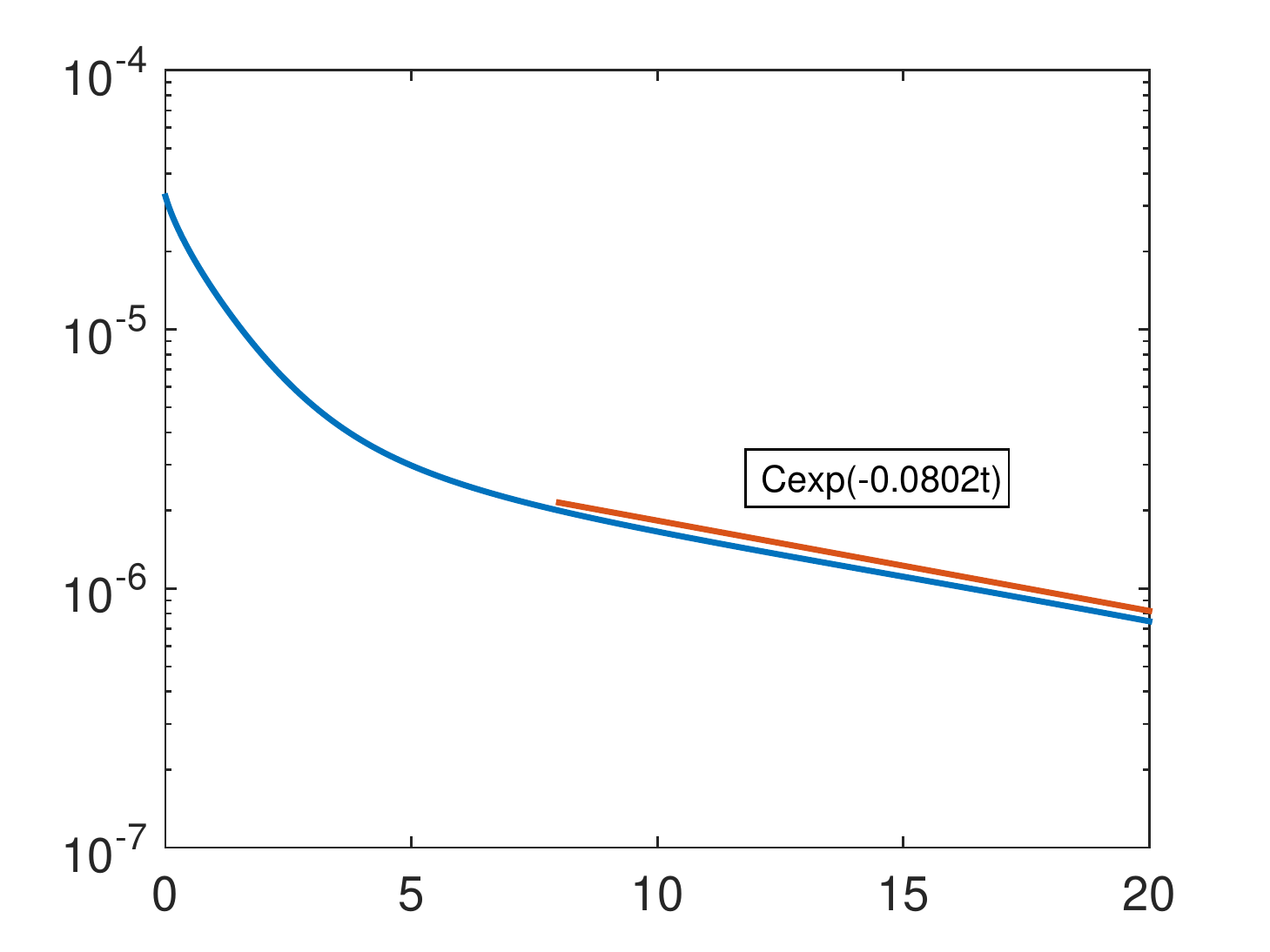}\\
{$\nu=0.95$}&{$\nu=2$}\\
\end{tabular}
\caption{The rate of convergence of the discrete relative energy}
\label{rate_conv_rel}
\end{center}
\end{figure}

Figure~\ref{rate_conv_rel} suggests that the convergence of the discrete solution to the scheme towards the discrete equilibrium occurs at exponential rate. More precisely we have
\be \Ee(f,g|F,G)\leq C\exp (-p(\nu)t),\label{p_rate}\ee
where the rate $p(\nu)$ strongly depends on $\nu$. We plot on Figure~\ref{evolution_rate} the function $\nu \mapsto p(\nu)$ obtained experimentally. At its minimum, the function $p$ is close to 0. This prohibits to conclude to the exponential convergence whatever $\nu \in (0,+\infty)$ and whatever the initial data for the continuous model.
\begin{figure}[htbp]
\begin{center}
\includegraphics[scale=0.8]{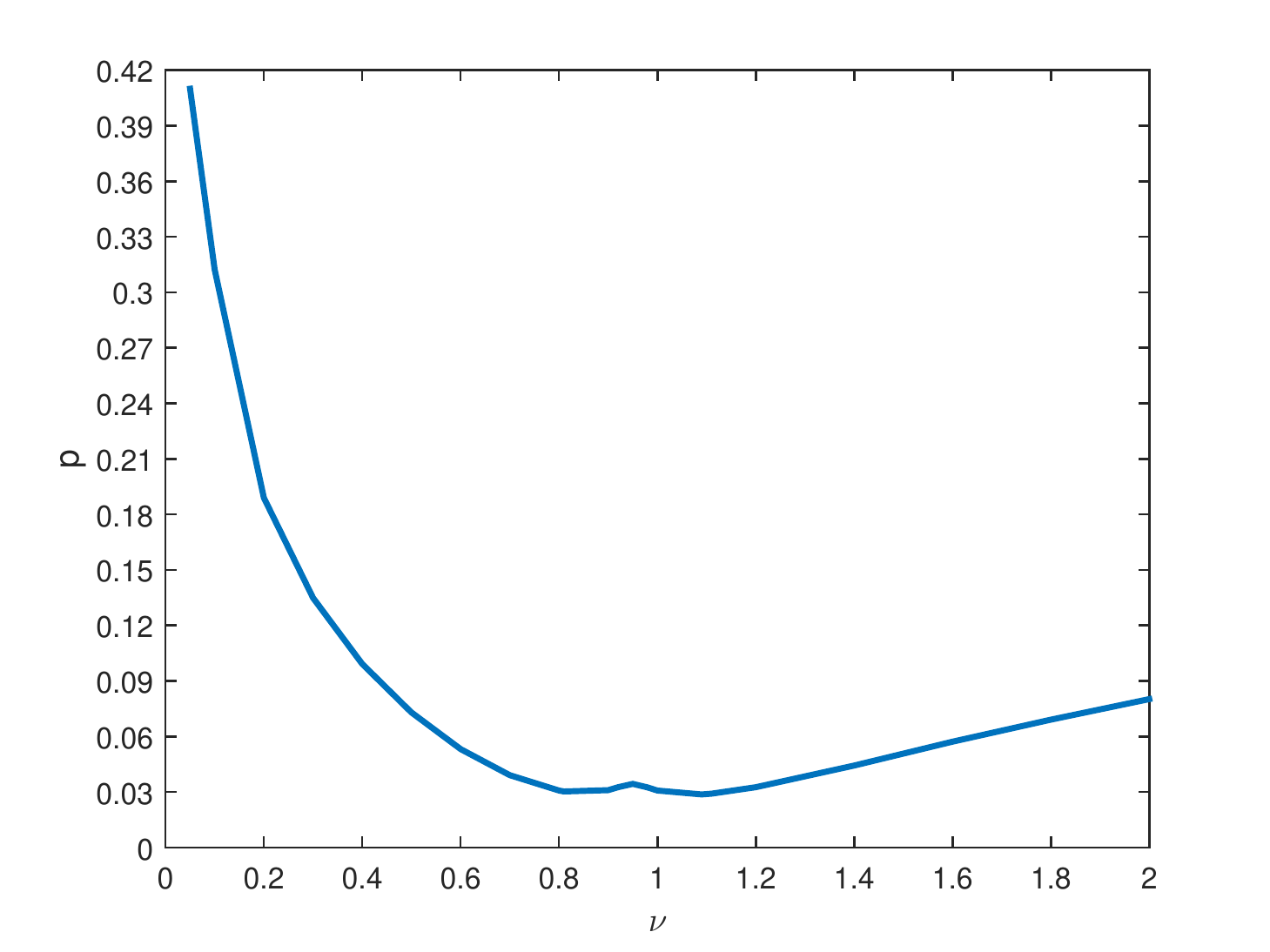}
\caption{The rate of convergence $p$ in (\ref{p_rate}) following  the values of $\nu$}
\label{evolution_rate}
\end{center}
\end{figure}

\appendix
\section{auxiliary results}

\begin{lem}
The embedding $L^1(\R^2)\cap L^2(\R^2)$ in $(W^{1,4}(\R^2))'$ is continuous. \label{embedding_L1L2_dual_W14}
\end{lem}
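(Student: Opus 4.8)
The plan is to observe that an element of $L^1(\R^2)\cap L^2(\R^2)$ lies, by interpolation, in $L^{4/3}(\R^2)$, which is precisely the dual-exponent space to $L^4(\R^2)\supset W^{1,4}(\R^2)$; the lemma then follows from two applications of Hölder's inequality, with no genuine Sobolev embedding needed.

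First I would record the interpolation bound. For $u\in L^1(\R^2)\cap L^2(\R^2)$, Hölder's inequality with the exponent relation $\tfrac34=\tfrac12\cdot 1+\tfrac12\cdot\tfrac12$ gives
\[
\|u\|_{L^{4/3}(\R^2)}\le \|u\|_{L^1(\R^2)}^{1/2}\,\|u\|_{L^2(\R^2)}^{1/2}\le \tfrac12\big(\|u\|_{L^1(\R^2)}+\|u\|_{L^2(\R^2)}\big),
\]
the last inequality being the arithmetic–geometric mean inequality. In particular $L^1(\R^2)\cap L^2(\R^2)$ embeds continuously into $L^{4/3}(\R^2)$.

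Next I would use the trivial continuous embedding $W^{1,4}(\R^2)\hookrightarrow L^4(\R^2)$, which holds with constant $1$ straight from the definition of the $W^{1,4}$ norm, namely $\|\varphi\|_{L^4(\R^2)}\le\|\varphi\|_{W^{1,4}(\R^2)}$. Then, for any $\varphi\in W^{1,4}(\R^2)$ and any $u\in L^1(\R^2)\cap L^2(\R^2)\subset L^{4/3}(\R^2)$, Hölder's inequality with exponents $4/3$ and $4$ yields
\[
\left|\int_{\R^2} u\,\varphi\,\diff x\right|\le \|u\|_{L^{4/3}(\R^2)}\,\|\varphi\|_{L^4(\R^2)}\le \|u\|_{L^{4/3}(\R^2)}\,\|\varphi\|_{W^{1,4}(\R^2)} .
\]
Since $u$ is determined by this pairing (because $C_c^\infty(\R^2)$ is dense in $W^{1,4}(\R^2)$), the map $u\mapsto\big(\varphi\mapsto\int_{\R^2}u\varphi\,\diff x\big)$ is a well-defined linear injection of $L^1(\R^2)\cap L^2(\R^2)$ into $(W^{1,4}(\R^2))'$, and taking the supremum over $\varphi$ in the unit ball of $W^{1,4}(\R^2)$ together with the interpolation bound gives
\[
\|u\|_{(W^{1,4}(\R^2))'}\le \|u\|_{L^{4/3}(\R^2)}\le \tfrac12\big(\|u\|_{L^1(\R^2)}+\|u\|_{L^2(\R^2)}\big),
\]
which is exactly the claimed continuity.

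There is no real obstacle here. The only point requiring a moment's thought is the choice of the intermediate Lebesgue space $L^{4/3}(\R^2)$: it is dictated by duality with the exponent $4$, so that one never needs the borderline Sobolev embedding $W^{1,4}(\R^2)\hookrightarrow L^\infty(\R^2)$, but only the defining inequality $\|\varphi\|_{L^4}\le\|\varphi\|_{W^{1,4}}$ and two instances of Hölder's inequality.
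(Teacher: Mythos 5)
Your proof is correct, but it takes a genuinely different route from the paper's. The paper splits the pairing over $\{|x|\le 1\}$ and $\{|x|>1\}$, estimating the first piece by $\|g\|_{L^2}\|h\|_{L^4}$ and the second by $\|g\|_{L^1}\|h\|_{L^\infty}$, and then invokes the (supercritical, Morrey-type) Sobolev embedding $W^{1,4}(\R^2)\hookrightarrow L^4(\R^2)\cap L^\infty(\R^2)$. You instead interpolate, $\|u\|_{L^{4/3}}\le\|u\|_{L^1}^{1/2}\|u\|_{L^2}^{1/2}$, and pair against $\varphi$ using only $L^{4/3}$--$L^4$ duality together with the trivial inequality $\|\varphi\|_{L^4}\le\|\varphi\|_{W^{1,4}}$. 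Your argument is more elementary (no embedding into $L^\infty$ is needed) and proves slightly more, namely that the embedding factors as $L^1\cap L^2\hookrightarrow L^{4/3}=(L^4)'\hookrightarrow (W^{1,4})'$; the paper's domain-splitting argument, by contrast, exploits the $L^\infty$ control that $W^{1,4}(\R^2)$ provides in two dimensions, which is a natural reflex in this setting but not required for this particular lemma. Both yield the stated continuity, and your explicit constant $\tfrac12(\|u\|_{L^1}+\|u\|_{L^2})$ is perfectly adequate.
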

\begin{proof}
For $g\in L^1\cap L^2$ and $h\in W^{1,4}$ , one has
\begin{align*}
\left|  <g, h>\right |&\leq \int_{\{|x|\leq 1\}} |g||h|\diff x+ \int_{\{|x|>1\}} |g||h|\diff x \\ & \leq \pi^{1/4}\|g\|_{L^2}\|h\|_{L^4}+\|g\|_{L^1}\|h\|_{L^\infty}\\ &\leq C\left(\|g\|_{L^2}+\|g\|_{L^1} \right) \|h\|_{W^{1,4}},
\end{align*}
thanks to the continuous embedding  $W^{1,4}(\R^2)$ in $L^4(\R^2)\cap L^\infty (\R^2).$
\end{proof}

\bibliography{biblio}
\bibliographystyle{abbrv}
\end{document}